\newcommand{\comments}[1]{}
\renewcommand{\leq}{\leqslant}
\renewcommand{\bar}{\overline}
\renewcommand{\geq}{\geqslant}
\newcommand{\w}{\omega}
\newcommand{\al}{\alpha}
\newcommand{\be}{\beta}
\newcommand{\g}{\gamma}
\newcommand{\G}{\Gamma}
\newcommand{\de}{\delta}
\newcommand{\e}{\epsilon}
\newcommand{\la}{\lambda}
\newcommand{\D}{\Delta}
\newcommand{\ka}{\kappa}
\newcommand{\si}{\sigma}
\newcommand{\R}{\mathbb{R}}
\newcommand{\cC}{\mathcal{C}}
\newcommand{\Q}{\mathbb{Q}}
\newcommand{\I}{\mathbb{I}}
\newcommand{\N}{\mathbb{N}}
\newcommand{\Sorg}{\mathbb{S}}
\newcommand{\cZ}{\mathcal{Z}}
\newcommand{\B}{\mathcal{B}}
\newcommand{\U}{\mathcal{U}}
\newcommand{\F}{\mathcal{F}}
\newcommand{\V}{\mathcal{V}}
\newcommand{\W}{\mathcal{W}}
\newcommand{\halo}{\mathring}
\newcommand{\Lin}{Lindel\"of }
\newcommand{\Lind}{Lindel\"of}
\newcommand{\Int}{\textrm{Int}}
\newcommand{\Xp}{X^{+}}
\newcommand{\Xm}{X^{-}}
\newcommand{\2}{\sqrt{2}}
\newcommand{\n}{\frac{1}{n}}
\newcommand{\ra}{\rightarrow}
\newcommand{\lar}{\leftarrow}
\newtheorem{thm} {Theorem}[section]
\newtheorem{propn}[thm]{Proposition}
\newtheorem{lem} [thm]{Lemma}
\newtheorem{cor}[thm]{Corollary}
\theoremstyle{definition}
\newtheorem{defn}[thm]{Definition}
\newtheorem{xmpl}[thm]{Example}
\newtheorem{xmpls}[thm]{Examples}
\newtheorem*{construction}{Construction}
\newtheorem*{cl}{Claim}
\newtheorem*{oq}{Open Question}
\theoremstyle{remark}
\newtheorem{rmk}[thm]{Remark}
\newtheorem{nt}[thm]{Note}
\newtheorem*{case}{Case}
\title{
A Comparison of Lindel\"of-type Covering Properties of Topological Spaces 
}
\author{
Petra Staynova \affiliation{Oxford University}
}
\abstract{
Lindel\"of spaces are studied in any basic Topology course. 
However, there are other interesting covering properties with similar behaviour, such as almost \Lind, weakly \Lind, and quasi-\Lind,  that have been considered in various research papers. 
Here we present a comparison between the standard results on \Lin spaces and analogous results for weakly and almost \Lin spaces.
Some theorems, similar to the published ones, will be proved. 
We also consider counterexamples, most of which have not been included in the standard Topological textbooks,
that show the interrelations between 
those properties and various basic topological notions, such as separability, separation axioms, first countability, and others. 
Some new features of those examples will be noted in view of the present comparison. 
We also pose several open questions. 
}
\date{Fall 2011}
\begin{document}
\maketitle
\let\thefootnote\relax\footnotetext{This paper consists of the core part of author's 3d year BE Extended Essay at Oxford University, UK.}

\section{Historical Overview and Motivation}

One of the basic theorems in Real Analysis, the Heine--Borel Theorem, states (in modern terminology) that every closed interval on the real line is compact. 
Later it was discovered that a similar property holds in more general metric spaces: every closed and bounded subset turns out to be compact and conversely, every compact subset is closed and bounded. 
It turned out that compactness is in fact a covering property:
 the modern description of compactness via open covers emerged from the work of P. S. Alexandrov and P. S. Urysohn in their famous \emph{``Memoire sur les espaces topologiques compacts''} \cite{AU}. 
Compact spaces in many ways resemble finite sets. 
 For example, the fact that in Hausdorff topological spaces two different points can be separated by disjoint open sets easily generalizes to the fact that in such spaces two disjoint compact sets can also be separated by disjoint open sets. 
Any finite set is compact in any topology and the fact that compact spaces should be accompanied by some kind of separation axiom comes from the fact that any set with the co-finite topology is compact (but fails to be Hausdorff). 

 However, our favourite ``real'' objects such as the real line, real plane etc. fail to be compact. 
Ernst Lindel\"of was able to identify the first compactness-like covering property (which was later given his name): the property that from every open cover, one can choose a countable subcover.
The Lindel\"of theorem, stating that every second countable space is Lindel\"of, was proved by him for Euclidean spaces as early as 1903 in \cite{Lind}. 
Many facts that held for compact spaces, such as that every closed subspace of a compact space is also compact, remain true in Lindel\"of spaces. 
In metric spaces the Lindel\"of property was proved to be equivalent to separability, the existence of a countable basis and the countable chain condition -- all of which hold on the real line. 
But many other properties, such as preservation under products, fail even in the finite case for Lindel\"of spaces. 

Another generalization that is much closer to compactness is the notion of an H-closed space: this is a Hausdorff space in which from every open cover we can choose a finite subfamily with dense union. H-closed spaces turned out to be of broad interest and importance. And that is why it was quite natural when in 1959 Z.Frolik \cite{Fro} introduced and studied weakly Lindel\"of spaces that are generalisations of both H-closed and Lindel\"of spaces. 

Further attention to the \Lind-type covering properties was brought about by 
 the famous problem of P. S. Alexandrof whether any Hausdorff compact space with the first axiom of countability has cardinality at most continuum. 
This question remained unsolved for 50 years and only in 1969 did A. V. Arhangelskii  proved it to be true. 
But he proved much more: the cardinality of any Hausdorff first countable Lindelof space does not exceed continuum. 
The solution of that problem and some previous results of A. Hajnal and I. Juhasz gave birth to a whole new branch of topology: the theory of cardinal invariants of topological spaces. 
One central area of investigation there is to study what kind of topological restrictions of a space can ensure that its cardinality is at most continuum. 
Stated in that way, the question seems to be mainly of pure mathematical interest. 
But the fact that a given space has cardinality continuum is another interesting way both of defining various kinds of topologies on our old favourites: the real line, the real plane, etc., as well as of studying the variety of different topologies on $\R$ besides the well-known Eucledean one. 
 That is why further generalizations of the Lindel\"of property naturally appeared in the theory of cardinal invariants mainly as an attempt to generalize or weaken the conditions in Arhangelskii's result.
 Bell, Ginsbugh and Woods used weakly Lindel\"of spaces; Dissanayeke and Willard introduced almost Lindel\"of spaces; later on, Arhangelskii defined quasi-\Lin spaces. 
 It was not until recently that spaces satisfying Lindel\"of-type covering properties have been studied outside the context of cardinal functions and cardinality restrictions. 
 It  has been investigated whether these properties are preserved under the main operations on topological spaces: taking  subspaces, products, disjoint sums, quotient spaces, and so on.

\subsection{Project Structure}

In Section \ref{Lindelof} we will present some basic properties of Lindel\"of spaces.
These include main theorems and interrelations between separability, CCC, countable basis and the Lindel\"of property in metric spaces, as well as how separation axioms and Lindel\"ofness correlate in such spaces.
In Section \ref{Lindelofgeneralizations} we will also consider the main generalizations of the Lindel\"of property, such as weakly Lindel\"of spaces, almost Lindel\"of spaces and quasi-Lindel\"of spaces, and investigate how these properties relate to the corresponding properties of Lindel\"of spaces. 
Just as we loose finite productivity when going from compact to \Lin spaces, we also loose some other properties in the further generalisations of \Lind ness. 
That is why the main emphasis of this project will be on counterexamples that show failures of some properties.

We present several examples from journal papers, elaborating on the details, sometimes providing our own proofs, and proving, in addition, some new properties. 
 Some examples were constructed for a different purpose, but we use them in the new context of  interrelations between Lindel\"of-type covering properties. We will present some of them not in their most general form, but in a partial case that demonstrates their main ideas and allows for a more vivid presentation and visualisation. 

\subsection{Standard Notation, Definitions and Results}

We follow the basic terminology, results, and notions from \cite{SDb}, \cite{ENG}, and \cite{SW}. We assume all spaces to be Hausdorff unless otherwise stated. 
We will also consider the countable versions of notions and results proved in \cite{A}, \cite{BGW}, and \cite {DW}.
For counterexamples and less known theorems we use sources such as
 \cite{CS},
 \cite{SZ},
 and 
 \cite{HJ}. 
\\

For sets and topological spaces, we use:

\begin{itemize}
\item
$\tau_X$ for the topology on a set $X$;
\item
$(X,\tau_X)$ for the space $X$ with the respective topology $\tau$; or just $\tau$, when $X$ is clear; or just $X$, when $\tau$ is clear;
\item
$\halo{A}$ or $\Int(A)$ for the interior of $A\subset X$;
\item
$\bar{A}$  for the closure of $A\subset X$;
\item
$|A|$ for the cardinality of the set $A$;
\item
$\bigcup\U$ for $\displaystyle{\bigcup_{U\in\U}U}$, where $\U$ is some collection of sets;
\item
$d(A,x)$ - the distance between the point $x$ and the set $A$, defined as $d(A,x)=\inf\{d(a,x):a\in A\}$.
\end{itemize}

We write:
\begin{itemize}
\item
$\R$ for the reals;
\item
$\R^+$ and $\R^2_+$ - for the positive reals, and the open upper half-plane, respectively;
\item
$\N$ for the positive integers;
\item
$\Q$ for the rationals;
\item
$\I$ for the irrationals;
\item
$\mathcal{P}(X)$ for the power set of $X$;
\item
$\w_1$ for the first uncountable ordinal with its natural order. 
\end{itemize}

We always consider $\R$ with the Euclidean topology, unless otherwise stated.
\\

For convenience, finite sets will be considered countable. 
\\

From now on, let $X$ and $Y$ be a topological spaces.
\\

We use the following standard results and definitions from Topology:

\begin{defn} The following are standard topologies on $X$:
\begin{enumerate}
\item
The \emph{discrete topology} is $\tau_{discrete}=\mathcal{P}(X)$.
\item
The \emph{co-finite topology} is $\tau_{co-finite}=\{U \subset X: X \setminus U \textrm{ finite }\}$. 
\item
The \emph{co-countable topology} is $\tau_{co-countable}=\{U\subset X: X\setminus U\textrm{ countable }\}$.
\end{enumerate}
\end{defn}

We say that a set is \emph{clopen} if it is both closed and open. 

\begin{defn}
A collection $\B$ of open subsets of $X$ is called a \emph{basis}
 for the topology on $X$ if any open subset of $X$ is a union of some subfamily of $\B$. 
\end{defn}

\begin{propn} 
\label{basis}
Let $X$ be a set and $\B$ a family of subsets of $X$ such that 
\begin{itemize}
\item[(B1)]
 $X$ is a union of sets in $\B$;
\item[(B2)]
for any $B_1, B_2 \in \B$, 
there exists a $B_3\in\B$ such that $B_3\subset B_1\cap B_2$. 
\end{itemize}
Then $\B$ forms a basis for a topology on $X$.
\end{propn}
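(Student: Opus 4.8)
The plan is to construct explicitly the topology that $\B$ is meant to generate and then verify directly that it satisfies the three axioms of a topology, after which the assertion that $\B$ is a basis is immediate from the construction. I would set
\[
\tau = \Bigl\{\, U \subseteq X : U = \bigcup \mathcal{C} \ \text{for some}\ \mathcal{C} \subseteq \B \,\Bigr\},
\]
so that $\tau$ consists of exactly those subsets of $X$ that arise as unions of subfamilies of $\B$. By this definition every member of $\B$ already lies in $\tau$, and conversely every element of $\tau$ is a union of members of $\B$; hence once $\tau$ is shown to be a topology, $\B$ is automatically a basis for it in the sense of the preceding definition.

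Next I would run through the topology axioms in turn. The empty set is the union of the empty subfamily, so $\emptyset \in \tau$, and $X \in \tau$ is precisely condition (B1). Closure under arbitrary unions is essentially free: a union of sets, each of which is itself a union of members of $\B$, is again a union of members of $\B$, since one simply amalgamates the indexing subfamilies into a single $\mathcal{C} \subseteq \B$. The only substantive axiom is closure under finite intersection, and since finite intersections are built up from binary ones, by induction it suffices to treat the intersection of two sets $U, V \in \tau$.

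The heart of the argument, and the step where (B2) does its work, is to show $U \cap V \in \tau$. Writing $U = \bigcup_i B_i$ and $V = \bigcup_j C_j$ with all $B_i, C_j \in \B$, distributivity gives $U \cap V = \bigcup_{i,j}(B_i \cap C_j)$, so it suffices to express each pairwise intersection $B_i \cap C_j$ as a union of members of $\B$. Here I expect the genuine obstacle: the literal statement of (B2) supplies only a single $B_3 \in \B$ with $B_3 \subseteq B_1 \cap B_2$, which is not enough to exhaust $B_1 \cap B_2$ by basis elements — one small $B_3$ sitting inside a larger intersection can leave points uncovered, and then $B_1 \cap B_2$ need not be a union of members of $\B$ at all. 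What the argument really requires is the \emph{pointwise} form of (B2): for every $x \in B_1 \cap B_2$ there exists some $B_3 \in \B$ with $x \in B_3 \subseteq B_1 \cap B_2$. I would therefore read (B2) in this intended stronger sense. Granting it, for each $x \in B_i \cap C_j$ choose such a $B_3^{(x)}$; then $B_i \cap C_j = \bigcup_x B_3^{(x)}$ exhibits the intersection as a union of members of $\B$, whence $U \cap V$ is one as well and lies in $\tau$. With all three axioms verified, $\tau$ is a topology for which $\B$ is by construction a basis, which is exactly the claim.
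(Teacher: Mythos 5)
The paper itself states this proposition without proof, as part of its standard preliminaries, so there is no argument of the author's to compare against; your construction of $\tau$ as the collection of all unions of subfamilies of $\B$, with (B1) giving $X\in\tau$, closure under unions being automatic, and (B2) doing the work for binary intersections, is the standard route and is correct once (B2) is read pointwise. Your diagnosis of the obstacle is also exactly right, and it is worth recording that the literal statement is not merely awkward but false. Take $X=\{1,2,3,4\}$ and $\B=\{\{1,2,3\},\{2,3,4\},\{2\}\}$. Then (B1) holds, and (B2) holds literally: every pairwise intersection either is itself a member of $\B$ or equals $\{1,2,3\}\cap\{2,3,4\}=\{2,3\}$, which contains the member $\{2\}\in\B$. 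But any topology having $\B$ as a basis in the sense of the paper's definition would contain $\{1,2,3\}$ and $\{2,3,4\}$, hence their intersection $\{2,3\}$; yet the only member of $\B$ contained in $\{2,3\}$ is $\{2\}$, so $\{2,3\}$ is not a union of members of $\B$ --- a contradiction. Thus the pointwise form you substitute (for every $x\in B_1\cap B_2$ there is $B_3\in\B$ with $x\in B_3\subseteq B_1\cap B_2$) is not just the ``intended'' reading but is necessary for the conclusion, and with it your proof is complete and correct.
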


\begin{defn} Any open $U\subset X$ with $x\in U$ is called a \emph{neighborhood} of $x$. 
\end{defn}

\begin{defn}
The \emph{product topology} on $X\times Y$ is the topology generated by the basis $\displaystyle{\B=\{U\times V: U\in\tau_X, V\in\tau_Y\}}$.
\end{defn}

\begin{defn}
For any $x\in X$ the family of open sets $\V_x$ is called a \emph{local (neighborhood) basis} at the point $x$ if for any open  $U\ni x$ there is an element $V\in\V_x$ such that $x\in V\subset U$.
\end{defn}

\begin{propn} \label{LB} The family $\V_x$ of open subsets of $X$ is a local base at $x$ if and only if 
\begin{itemize}
\item[(LB1)] 
$x\in V$ for any $V\in\V_x$
\item[(LB2)]
for any $V_1,V_2\in\V_x$, there exists $V_3\in\V_x$ such that $V_3\subset V_1\cap V_2$. 
\end{itemize}
\end{propn}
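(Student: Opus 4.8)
The statement is an equivalence, so the plan is to prove the two implications separately, in the spirit of the basis criterion of Proposition~\ref{basis}. The guiding fact throughout is the defining property of a local base: every open $U$ with $x \in U$ must contain some $V \in \V_x$ with $x \in V \subset U$.

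First I would treat the forward direction, assuming $\V_x$ is a local base at $x$. Condition (LB1) should come for free, since the members of a neighborhood base are by definition neighborhoods of $x$, hence open sets containing $x$. For (LB2) I would take arbitrary $V_1, V_2 \in \V_x$; as both are open and contain $x$, the set $V_1 \cap V_2$ is again an open neighborhood of $x$, and applying the local-base property with $U = V_1 \cap V_2$ yields a $V_3 \in \V_x$ with $x \in V_3 \subset V_1 \cap V_2$, which is exactly (LB2).

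The hard part will be the backward direction. Here I would need to deduce, from (LB1) and (LB2) alone, that every open $U \ni x$ contains a member of $\V_x$, and this is the step I expect to be the main obstacle, because (LB1) and (LB2) are only the ``filter base'' conditions and do not by themselves guarantee cofinality inside an arbitrary neighborhood. For instance, in $\R$ with $x = 0$, the single-element family $\V_x = \{(-1,1)\}$ satisfies both (LB1) and (LB2), yet fails to be a local base, since no member sits inside $(-1/2,1/2)$. I would therefore first pin down whether the intended hypothesis also carries the cofinality built into the definition, or whether, paralleling Proposition~\ref{basis}, the claim is that (LB1) and (LB2) let $\V_x$ serve as a local base for the topology it generates at $x$. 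Under that reading the argument becomes routine: declare the neighborhoods of $x$ to be all sets containing some member of $\V_x$, use (LB2) to check this collection is closed under finite intersections, and verify cofinality directly. Getting this cofinality step exactly right is the crux on which the whole equivalence turns.
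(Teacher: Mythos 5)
The paper offers no proof of this proposition at all: it is listed among the unproved standard preliminaries, so there is no argument of the paper's to compare yours against step by step. That said, your analysis is sound and in fact exposes a real defect in the statement as written. Your forward direction is the standard one (apply the defining property of a local base to $U = V_1 \cap V_2$), and your counterexample for the backward direction is valid: in $\R$ with $x = 0$, the family $\V_x = \{(-1,1)\}$ satisfies (LB1) and (LB2) but is not a local base at $0$, since the open set $(-1/2,1/2)$ contains no member of $\V_x$. So the literal ``if'' direction is false; (LB1) and (LB2) are only filter-base conditions and carry no cofinality. The repaired reading you settle on --- that, in parallel with Proposition \ref{basis}, families satisfying these conditions serve as local bases \emph{for the topology they generate} --- is exactly how the paper itself deploys the proposition later: in the Irrational Slope construction and in Winkler's example, conditions of precisely this type (together with a covering condition) are verified in order to conclude that the declared neighborhoods define a topology on the space, and the cofinality of $\V_x$ inside that topology's open sets is then immediate from the definition of openness there.

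One further refinement you could note: under the paper's literal definition of a local base (which demands only that every open $U \ni x$ contain some $V \in \V_x$ with $x \in V \subset U$), condition (LB1) is not automatic in the forward direction either, since nothing prevents a local base from containing extra sets that miss $x$. Your remark that members are ``by definition neighborhoods of $x$'' reflects the usual convention, not the paper's stated definition, so strictly speaking both implications need that convention to be read into the definition before the proposition becomes true.
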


\begin{propn}\label{LBB}
If $\B$ is a base for $X$, then $\B_x=\{V\in\V:x\in V\}$ is a local base for any $x\in X$. Conversely, $\B=\{\V_x:\V_x \textrm{ is a local base at x, }x\in X\}$ is a base for the topology on $X$. 
\end{propn}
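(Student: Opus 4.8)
The plan is to prove each direction directly from the definitions of \emph{basis} and \emph{local base}, rather than invoking the characterizations (B1)--(B2) and (LB1)--(LB2), since the two verifications are short enough that the defining conditions are the most natural tools. Throughout I read $\B_x$ as $\{B\in\B:x\in B\}$ and the converse family as the union $\bigcup_{x\in X}\V_x$ of the chosen local bases.

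For the first assertion, suppose $\B$ is a base for $X$ and fix $x\in X$. Every member of $\B_x$ is open and contains $x$, so it remains only to check the covering condition in the definition of a local base. Given any open $U$ with $x\in U$, the base property lets me write $U$ as a union of members of $\B$; since $x\in U$, the point $x$ must lie in at least one of these members, say $B$. Then $B\in\B$ and $x\in B$, so $B\in\B_x$, while $B\subset U$ because $B$ is one of the sets whose union is $U$. Thus $x\in B\subset U$ with $B\in\B_x$, which is exactly what the definition of a local base at $x$ requires.

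For the converse, suppose that for each $x\in X$ a local base $\V_x$ at $x$ has been chosen, and set $\B=\bigcup_{x\in X}\V_x$. Each element of $\B$ is open, so to conclude that $\B$ is a base I must show that an arbitrary open set $U$ is a union of members of $\B$. For each point $x\in U$, the fact that $\V_x$ is a local base at $x$ and that $U$ is an open neighborhood of $x$ yields a set $V_x\in\V_x\subset\B$ with $x\in V_x\subset U$. I then claim $U=\bigcup_{x\in U}V_x$: the inclusion $\supseteq$ holds since each $V_x\subset U$, and $\subseteq$ holds since every $x\in U$ lies in its own $V_x$. This exhibits $U$ as a union of members of $\B$, completing the proof.

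There is no serious obstacle here; the only points requiring care are bookkeeping ones --- confirming that the set $B$ extracted in the first part genuinely lies inside $U$ (it does, as a summand of the union equal to $U$), and writing the open set $U$ in the second part as the union indexed over its own points rather than over the ambient space. Both are routine once the definitions are unwound.
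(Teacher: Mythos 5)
Your proof is correct: both directions are verified directly from the paper's definitions of basis and local base, including the right reading of the statement's loose notation ($\B_x=\{B\in\B:x\in B\}$ and the converse family as $\bigcup_{x\in X}\V_x$). The paper states this proposition as a standard preliminary without any proof, so there is nothing to compare against; your direct verification is exactly the standard argument that fills that gap, and it is complete.
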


\begin{defn} $X$ is \emph{first countable} if for any $x\in X$ there is a countable local base. 
\end{defn}

\begin{defn} $X$ is \emph{second countable} if its topology has a countable base. 
\end{defn}

\begin{defn} \label{CCC} $X$ satisfies the \emph{countable chain condition} if every family of non-empty disjoint open subsets of $X$ is countable. We say that $X$ is CCC.
\end{defn}

\begin{defn} We say that $X$ is \emph{separable} if it has a countable dense subset.
\end{defn}

\begin{defn} $X$ is \emph{$T_1$} if whenever $x\neq y\in X$, there is a neighborhood of each not containing the other.
\end{defn}

\begin{defn} $X$ is \emph{Hausdorff} if for every $x\neq y\in X$, there exist disjoint open sets $U,V\in X$ with $x\in U$ and $y\in V$. 
\end{defn}

\begin{defn} $X$ is \emph{Urysohn} if whenever $x\neq y\in X$ there are $U,\ V$ open with $x\in U$, $y \in V$, $\bar{U} \cap \bar{V} = \emptyset$. 
\end{defn}

\begin{defn} $X$ is \emph{regular} if whenever $F$ is closed and $x \notin F$, there exist disjoint open sets $U,\ V$ with $x \in U$ and $F \subset V$. 
\end{defn} 

\begin{propn} \label{regularlemma}
 $X$ is regular if whenever $U$ is open and $x \in U$,  there exists an open set $V$ with $x \in V \subset \bar{V} \subset U$.
\end{propn}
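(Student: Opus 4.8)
The plan is to prove the stated implication by translating the closed-set formulation of regularity (Definition preceding the Proposition) into the open-set condition using complements. So I would assume the hypothesis holds: for every open $U$ and every $x \in U$ there is an open $V$ with $x \in V \subset \bar{V} \subset U$. To verify that $X$ is regular, I would begin with an arbitrary closed set $F$ and a point $x \notin F$, and set $U = X \setminus F$. Since $F$ is closed, $U$ is open, and $x \in U$; the hypothesis then furnishes an open set $V$ with $x \in V \subset \bar{V} \subset U$.

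The key step is to manufacture the second open set required by the definition of regularity. I would take $W = X \setminus \bar{V}$, which is open because $\bar{V}$ is closed. Since $\bar{V} \subset U = X \setminus F$, passing to complements gives $F \subset X \setminus \bar{V} = W$, so $W$ is an open set containing $F$. Moreover $V \cap W = V \cap (X \setminus \bar{V}) = \emptyset$, using only the elementary fact that $V \subset \bar{V}$. Thus $V$ and $W$ are disjoint open sets with $x \in V$ and $F \subset W$, which is exactly what regularity demands.

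For completeness I would also record the converse, since this characterization is really an equivalence. Assuming $X$ regular, I would take an open $U$ and $x \in U$, set $F = X \setminus U$ (closed, with $x \notin F$), and apply regularity to obtain disjoint open sets $V, W$ with $x \in V$ and $F \subset W$. From $V \cap W = \emptyset$ one gets $V \subset X \setminus W$, and since $X \setminus W$ is closed this yields $\bar{V} \subset X \setminus W$; finally $F \subset W$ forces $X \setminus W \subset U$, so that $x \in V \subset \bar{V} \subset U$.

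I do not expect any genuine obstacle here: the entire argument is bookkeeping with complements together with the two standing facts that $V \subset \bar{V}$ and that $\bar{V}$ is the smallest closed set containing $V$. The only point requiring care is keeping the direction of the inclusions correct when taking complements, which is why I would phrase each passage $\bar{V} \subset U \Rightarrow F \subset W$ explicitly rather than leave it implicit.
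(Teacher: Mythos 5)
Your proof is correct: the complementation argument (take $U = X \setminus F$, obtain $V$, and set $W = X \setminus \bar{V}$) is exactly the standard justification for this proposition, which the paper states without proof as a preliminary. The converse you include is also correct, and together the two directions establish the characterization as an equivalence, which is how the result is used later (e.g.\ in the proof that the Sorgenfrey line is regular).
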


\begin{defn} $X$ is \emph{completely regular} if for any closed $F\subset X$ and $x\notin F$ there is a continuous function $f:X\rightarrow\R$ with $f(x)=1$ and $f(F)=0$. 
\end{defn}

\begin{propn}If $X$ is completely regular then $X$ is regular.
\end{propn}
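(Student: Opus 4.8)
The plan is to unwind both definitions directly and transport the separation from $\R$ back to $X$ through the continuous function. Suppose $F\subset X$ is closed and $x\notin F$. Complete regularity supplies a continuous $f:X\rightarrow\R$ with $f(x)=1$ and $f(F)=0$. Since the two values $1$ and $0$ are separated by disjoint open sets on the real line, the idea is to pull those neighborhoods back through $f$ and invoke continuity to obtain the required open sets in $X$.

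Concretely, I would fix the midpoint $\frac{1}{2}$ and set $U=f^{-1}\big((\frac{1}{2},\infty)\big)$ and $V=f^{-1}\big((-\infty,\frac{1}{2})\big)$. Four things then need checking. First, $U$ and $V$ are open, since each is the preimage of an open interval under the continuous map $f$. Second, $x\in U$ because $f(x)=1>\frac{1}{2}$. Third, $F\subset V$ because $f(F)=\{0\}$ and $0<\frac{1}{2}$. Fourth, $U\cap V=\emptyset$, because the intervals $(\frac{1}{2},\infty)$ and $(-\infty,\frac{1}{2})$ are disjoint in $\R$, so no point of $X$ can map into both. Thus $U$ and $V$ witness regularity for the pair $(x,F)$, and since $x$ and $F$ were arbitrary, $X$ is regular.

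There is no serious obstacle here; the argument is essentially a translation of the definitions. The only points requiring a word of care are that continuity is used precisely to guarantee openness of the preimages, and that the choice of the separating value $\frac{1}{2}$ (any value strictly between $0$ and $1$ would serve) forces the two target intervals to be disjoint, which is exactly what makes $U$ and $V$ disjoint. I note also that the Hausdorff hypothesis plays no role, so the implication in fact holds for an arbitrary $X$.
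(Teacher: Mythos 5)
Your proof is correct: pulling back the disjoint rays $(\tfrac{1}{2},\infty)$ and $(-\infty,\tfrac{1}{2})$ under the continuous separating function $f$ is exactly the standard argument, and every step (openness of the preimages, membership of $x$ and containment of $F$, disjointness) is verified properly. The paper states this proposition without proof as a background fact, so there is nothing to compare against; your observation that the Hausdorff assumption is never used is also accurate.
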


\begin{defn}$X$ is \emph{normal} if whenever $F, \ K$ are disjoint closed sets, there exist disjoint open sets $U,\ V$ with $F \subset U$ and $K \subset V$.
\end{defn} 

\begin{propn} X is normal if whenever $F$ is closed, $U$ is open, and  $F \subset U$, there exists an open set $V$ with $F \subset V \subseteq \bar{V} \subset U$. 
\end{propn}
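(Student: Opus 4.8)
The plan is to prove the stated implication—that the shrinking condition forces normality—since that is the substantive direction; the converse is routine. The hypothesis is phrased for a single closed set sitting inside an open set, whereas normality concerns the separation of two disjoint closed sets, so the first step is to recast one of the two closed sets as the ambient open set.

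Concretely, I would start with disjoint closed sets $F,K\subset X$. Because $F\cap K=\emptyset$, we have $F\subset X\setminus K$, and $X\setminus K$ is open since $K$ is closed. Applying the hypothesis with target open set $X\setminus K$ yields an open $V$ with
\[
F\subset V\subseteq\bar V\subset X\setminus K.
\]
The separating neighborhoods then present themselves: take $V$ as the neighborhood of $F$ and $X\setminus\bar V$ as the neighborhood of $K$. The set $X\setminus\bar V$ is open since $\bar V$ is closed; it contains $K$ because $\bar V\subset X\setminus K$ is equivalent to $K\subset X\setminus\bar V$; and it is disjoint from $V$ since $V\subseteq\bar V$. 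This is exactly the defining condition for normality.

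I do not anticipate a genuine obstacle: this is the closed-set analogue of the regularity criterion in Proposition~\ref{regularlemma}, with the point $x$ replaced by the closed set $F$ and the same ``complement of the closure'' trick providing the separation. The only care needed is bookkeeping—avoiding reuse of the symbol $V$ for both the shrunk set and the neighborhood of $K$. Should the converse also be desired, it follows by applying normality to the disjoint closed sets $F$ and $X\setminus U$: the resulting open neighborhood $W$ of $F$ satisfies $F\subset W\subseteq\bar W\subset U$, since $\bar W$ is contained in the complement of the open neighborhood of $X\setminus U$, which in turn lies inside $U$.
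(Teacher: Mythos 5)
Your argument is correct: passing to the complement $X\setminus K$, applying the shrinking hypothesis, and separating with $V$ and $X\setminus\bar{V}$ is exactly the standard proof of this criterion, and your converse sketch is also sound. The paper states this proposition without proof (as a standard textbook fact), so there is nothing to compare against beyond noting that your route is the expected one.
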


\begin{propn}
If $X$ is normal then it is completely regular. 
\end{propn}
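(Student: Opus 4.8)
The plan is to prove this through Urysohn's Lemma, which is the genuine content lurking behind the statement; everything else is bookkeeping. Since the paper assumes all spaces Hausdorff, $X$ is $T_1$, so every singleton $\{x\}$ is closed, and this is exactly what lets me reduce ``point versus closed set'' separation to ``closed set versus closed set'' separation. I first record the interpolation form of normality I will need: if $F$ is closed, $U$ is open, and $F \subset U$, then there is an open $V$ with $F \subset V \subset \bar{V} \subset U$. This follows directly from the definition applied to the disjoint closed sets $F$ and $X \setminus U$: normality yields disjoint opens $V \supset F$ and $W \supset X \setminus U$, whence $\bar{V} \subset X \setminus W \subset U$.

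Next I would state and prove Urysohn's Lemma: if $X$ is normal and $A, B$ are disjoint closed sets, then there is a continuous $f : X \to [0,1]$ with $f(A) = 0$ and $f(B) = 1$. The construction indexes a nested family of open sets by the dyadic rationals $D = \{k/2^n : 0 \le k \le 2^n,\ n \ge 0\}$ in $[0,1]$. Setting $U_1 = X \setminus B$ and using the interpolation property to pick an open $U_0$ with $A \subset U_0 \subset \bar{U_0} \subset U_1$, I then define $U_q$ recursively so that $p < q$ implies $\bar{U_p} \subset U_q$. At stage $n+1$ a new dyadic $q = (2k+1)/2^{n+1}$ lies between previously defined neighbors $p_- < q < p_+$ with $\bar{U_{p_-}} \subset U_{p_+}$, and interpolation supplies an open $U_q$ with $\bar{U_{p_-}} \subset U_q \subset \bar{U_q} \subset U_{p_+}$, preserving the order relation.

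I then define $f(x) = \inf\{q \in D : x \in U_q\}$, with $f(x) = 1$ when $x$ lies in no $U_q$. Here $f(A) = 0$ because $A \subset U_0$, and $f(B) = 1$ because $B$ misses every $U_q$ with $q < 1$. Continuity follows from checking the subbasic preimages: one verifies $f(x) < a$ iff $x \in U_q$ for some $q < a$, so $f^{-1}([0,a)) = \bigcup_{q<a} U_q$ is open, and $f(x) > a$ iff $x \notin \bar{U_q}$ for some $q > a$, so $f^{-1}((a,1]) = \bigcup_{q>a}(X \setminus \bar{U_q})$ is open; these two equivalences are precisely where the nesting $\bar{U_p} \subset U_q$ for $p < q$ is used. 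Finally, to conclude complete regularity, given a closed $F$ and a point $x \notin F$, I use that $\{x\}$ is closed and disjoint from $F$, apply Urysohn's Lemma to get a continuous $g$ with $g(x) = 0$ and $g(F) = 1$, and set $f = 1 - g$, so that $f(x) = 1$ and $f(F) = 0$ as required.

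The main obstacle is the inductive construction of $\{U_q\}$ together with the continuity verification: keeping track of exactly which nesting inequalities are needed, and confirming that the density of $D$ makes the two characterizations of $\{f < a\}$ and $\{f > a\}$ hold, is the delicate part. Once the family and its order property are in hand, the remainder of the argument is routine.
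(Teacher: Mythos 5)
Your proof is correct. The paper states this proposition as a standard preliminary fact without giving any proof, and your argument via Urysohn's Lemma --- including the correct use of the paper's blanket Hausdorff (hence $T_1$) assumption to make $\{x\}$ closed, and the flip $f = 1-g$ to match the paper's convention $f(x)=1$, $f(F)=0$ --- is exactly the canonical argument that the paper implicitly relies on.
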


\begin{defn}  $X$ is \emph{$T_4$} if it is a normal $T_1$-space.
\end{defn}

\begin{propn}
Every normal space is regular; every regular space is Urysohn; every Urysohn space is Hausdorff; every Hausdorff space is $T_1$.
\end{propn}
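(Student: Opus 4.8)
The plan is to verify each of the four implications directly from the definitions, working from the weakest conclusion outward, and to make explicit where the standing assumption that points are closed (the $T_1$ property, guaranteed here by our global Hausdorff convention) is used. The two ``outer'' links are immediate. For \emph{every Hausdorff space is $T_1$}: given $x \neq y$, Hausdorffness supplies disjoint open sets $U \ni x$ and $V \ni y$; since $U \cap V = \emptyset$ and $y \in V$, the set $U$ is a neighbourhood of $x$ not containing $y$, and symmetrically $V$ is a neighbourhood of $y$ not containing $x$, which is exactly the $T_1$ condition. For \emph{every Urysohn space is Hausdorff}: the open sets $U,V$ witnessing the Urysohn property at $x \neq y$ satisfy $U \cap V \subseteq \bar{U} \cap \bar{V} = \emptyset$, so they already separate $x$ and $y$ by disjoint open sets.

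The implication \emph{every normal space is regular} is where the $T_1$ property first enters. Given a closed set $F$ and a point $x \notin F$, I would use that $\{x\}$ is closed (a consequence of $T_1$) in order to regard $\{x\}$ and $F$ as two disjoint closed sets; normality then yields disjoint open sets $U \supseteq \{x\}$ and $V \supseteq F$, and reading $x \in U$, $F \subseteq V$ gives precisely the regularity condition.

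The main work is in \emph{every regular space is Urysohn}. Here I plan to use the shrinking property of regular spaces — in a regular space, every open $U$ containing a point $x$ also contains a closed neighbourhood of $x$, i.e.\ an open $V$ with $x \in V \subseteq \bar{V} \subseteq U$ — which follows at once from the definition (separate $x$ from the closed set $X \setminus U$ by disjoint open $V$ and $W$, and observe $\bar{V} \subseteq X \setminus W \subseteq U$); this is the natural companion to Proposition~\ref{regularlemma}. Given $x \neq y$, the set $X \setminus \{y\}$ is open (again using that singletons are closed) and contains $x$, so shrinking produces an open $U$ with $x \in U \subseteq \bar{U} \subseteq X \setminus \{y\}$; in particular $y \notin \bar{U}$. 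Then $X \setminus \bar{U}$ is open and contains $y$, so a second shrinking produces an open $V$ with $y \in V \subseteq \bar{V} \subseteq X \setminus \bar{U}$, whence $\bar{U} \cap \bar{V} = \emptyset$. Thus $U$ and $V$ witness the Urysohn property.

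The only real subtlety — and the step I expect to require the most care to state honestly — is the reliance on singletons being closed in the two implications descending from normality and regularity: without some separation hypothesis these can fail (for instance an indiscrete space on at least two points is vacuously normal and regular, yet not even $T_1$). Under the paper's blanket assumption that all spaces are Hausdorff this causes no difficulty, since Hausdorff spaces are $T_1$ and hence have closed points, and there is no circularity because each implication is checked independently against the global hypothesis; I would flag this dependence explicitly rather than leave it implicit.
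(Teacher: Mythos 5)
Your proof is correct. The paper states this proposition without proof, as one of its standard preliminaries, so there is nothing to compare against; your argument --- the two immediate outer implications, the closed-singleton reduction for normal $\Rightarrow$ regular, and the double application of the shrinking property for regular $\Rightarrow$ Urysohn --- is the standard derivation, and your explicit tracking of where the paper's blanket Hausdorff (hence $T_1$) assumption enters is exactly the right care to take.
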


\begin{defn} $F\subset X$ is \emph{discrete} if whenever $U$ is open, then $|F\cap U|\leq 1$
\end{defn}

\begin{defn}
A map $f:X\rightarrow Y$ is called \emph{continuous} if the pre-image of any open set is open (or equivalently, the pre-image of every closed set is closed). 
\end{defn}

\begin{propn}\label{ctspropn} A map $f:X\rightarrow Y$ is  continuous if and only if $f(\bar{A})\subseteq \bar{f(A)}$ for every $A\subset X$. 
\end{propn}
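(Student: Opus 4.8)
The plan is to prove both implications of the biconditional, relying on the closed-set characterization of continuity (that the preimage of every closed set is closed) together with the fact that $\bar{A}$ is the smallest closed set containing $A$.

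For the forward direction, I would assume $f$ is continuous and fix an arbitrary $A\subset X$. Since $\overline{f(A)}$ is closed in $Y$, its preimage $f^{-1}(\overline{f(A)})$ is closed in $X$ by continuity. I would then observe the elementary containment $A\subseteq f^{-1}(f(A))\subseteq f^{-1}(\overline{f(A)})$, so that $f^{-1}(\overline{f(A)})$ is a closed set containing $A$. Because $\bar{A}$ is the smallest closed set containing $A$, this forces $\bar{A}\subseteq f^{-1}(\overline{f(A)})$, and applying $f$ to both sides yields $f(\bar{A})\subseteq\overline{f(A)}$, as required.

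For the reverse direction, I would assume $f(\bar{A})\subseteq\overline{f(A)}$ for every $A\subset X$ and verify continuity by checking that the preimage of each closed set is closed. Given a closed $C\subseteq Y$, I would set $A=f^{-1}(C)$; then $f(A)\subseteq C$, so $\overline{f(A)}\subseteq\bar{C}=C$ since $C$ is closed. The hypothesis then gives $f(\bar{A})\subseteq\overline{f(A)}\subseteq C$, whence $\bar{A}\subseteq f^{-1}(C)=A$. This shows $A$ is closed, so $f$ is continuous.

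The argument is essentially a direct unwinding of the definitions, so I do not anticipate a genuine obstacle; the only points requiring care are the two set-theoretic containments $A\subseteq f^{-1}(f(A))$ and $f(f^{-1}(C))\subseteq C$, and the repeated use of the closure as the smallest closed superset. Keeping straight the direction of each inclusion, especially not mistaking $f(f^{-1}(C))\subseteq C$ for an equality, is the main place a slip could occur.
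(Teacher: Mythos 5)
Your proof is correct, and both directions are carried out cleanly: the forward direction correctly uses that $f^{-1}(\overline{f(A)})$ is a closed superset of $A$, and the reverse direction correctly deduces $\bar{A}\subseteq A$ from $f(\bar{A})\subseteq C$, being careful to use only $f(f^{-1}(C))\subseteq C$ rather than equality. Note that the paper states this proposition among its preliminaries without any proof, so there is nothing to compare against; your argument is the standard one, and it relies only on the closed-set characterization of continuity, which the paper's own definition of continuity explicitly licenses.
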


\begin{defn} A map $f:X\rightarrow Y$ is \emph{open} if for every $U\in\tau_X$, $f(U)\in\tau_Y$.  
\end{defn}

\begin{propn} \label{opencts} 
 A continuous mapping $f: X \rightarrow Y$ is open if and only if there exists a base $\B$ for $X$ such that $f(U)$ is open in $Y$ for every $U \in \B$.
\end{propn}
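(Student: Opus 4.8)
The plan is to prove the two implications separately, noting that the forward direction is essentially immediate from the definitions while the backward direction rests on a single set-theoretic observation about images.

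For the forward implication, suppose $f$ is open. Then I would simply take $\B$ to be \emph{any} base for $X$ --- for instance the whole topology $\tau_X$ itself, or any base furnished by Proposition~\ref{basis}. Since every element $U \in \B$ is by definition an open subset of $X$, and $f$ is open, $f(U)$ is open in $Y$. Thus the required base exists, and this direction needs nothing beyond unwinding the definition of an open map.

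For the backward implication, suppose such a base $\B$ exists. I want to show that $f(W)$ is open in $Y$ for every open $W \subset X$. The key step is to write $W$ as a union of basic open sets: since $\B$ is a base, there is a subfamily $\{U_i\}_{i \in I} \subset \B$ with $W = \bigcup_{i \in I} U_i$. Then I would invoke the fact that images commute with arbitrary unions, namely $f\bigl(\bigcup_{i} U_i\bigr) = \bigcup_{i} f(U_i)$, which holds for any map of sets. By hypothesis each $f(U_i)$ is open in $Y$, so $f(W)$ is a union of open sets and is therefore open. This establishes that $f$ is open.

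The only point requiring care --- and the closest thing to an obstacle --- is the distributivity of the direct image over unions. Unlike preimages, direct images do not in general commute with intersections, so it is essential that a base expresses open sets as \emph{unions} (not intersections) of basic sets; this is exactly what makes the argument go through. I would also remark that continuity of $f$ is never actually used in either direction: the result is a characterization of openness that happens to be stated for continuous maps, but its proof is purely about how direct images interact with a base.
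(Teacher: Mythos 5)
Your proof is correct. The paper states Proposition~\ref{opencts} without proof, as a standard preliminary fact, and your argument is exactly the standard one: the forward direction by taking $\B=\tau_X$, and the backward direction via the set-theoretic identity $f\bigl(\bigcup_{i} U_i\bigr) = \bigcup_{i} f(U_i)$, which is the whole content of the result. Your closing remark is also accurate --- continuity plays no role in either direction, so the hypothesis that $f$ is continuous is inessential to this particular equivalence.
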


\section{Lindel\"of Spaces} \label{Lindelof}

In this section, we will recollect some main definitions, will summarize the main results about \Lin spaces, and present in depth several examples about the interrelation of some main topological notions in \Lin spaces. 
We will also aim at providing detailed visualisation of those examples that will enable the reader to better grasp the core ideas.

\setcounter{subsection}{-1}
\subsection{Preliminaries}

\begin{defn} A topological space $X$ is called \emph{Lindel\"of} if every open cover of $X$ has a countable subcover. \end{defn}

\begin{xmpls}[] \label{Lsmpl} The following are straightforward examples of \Lin spaces:
\begin{enumerate}
\item
Any countable topological space;
\item
Any compact topological space;
\item
Any space with the co-countable topology;
\item
A countable union of compact spaces;
\item \label{RLind}
 $\R$,  with the Euclidean topology. 
\end{enumerate}
\end{xmpls}

\begin{xmpl}[]

Any uncountable discrete topological space is not Lindel\"of. 

\end{xmpl}

A more interesting example of a non-Lindel\"of space can be obtained by considering the order topology on $[0,\w_1)$ and $[0,\w_1]$. 
Let us recall that for any $\al\in(0,\w_1)$, open neighborhoods will be the open intervals $(\al_1,\al_2)\ni \al$, where $0<\al_1<\al_2<\w_1$;
open neighborhoods of $0$ will be $[0,\al)$ for every $\al<\w_1$; and open neighborhoods of $\w_1$ will be $(\al,\w_1]$ with $\al<\w_1$.
In this way we can topologize both $[0,\w_1)$ and $[0,\w_1]$. 

\begin{xmpl} \label{nLsubset} The space $[0,\w_1)$, with the order  topology,  is not Lindel\"of.
\end{xmpl} 

\begin{proof}
The family $\U=\{U_{\al}=[0,\al):\al\in[0,\w_1)\}$ is an open cover of $[0,\w_1)$ with no countable subcover. 
Indeed, suppose $\U'=\{U_{\al_1},\ldots,U_{\al_n},\ldots\}\subset\U$ covers $[0,\w_1)$. 
Then $\{\al_1,\ldots,\al_n,\ldots\}$ is a countable set of countable ordinals. 
Hence, $\al_0=\sup\{\al_1,\ldots,\al_n,\ldots\}<\w_1$. 
Hence, $[\al_0 +1,\w_1)$ remains uncovered by $\U'$ -- a contradiction. 
 \end{proof}

\begin{propn} \label{basic} $X$ is Lindel\"of if and only if every cover $\U$ with basic open sets has a countable subcover.
\end{propn}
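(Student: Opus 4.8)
The plan is to prove Proposition \ref{basic} as a biconditional, tackling each direction in turn. The forward direction is immediate: if $X$ is \Lind, then \emph{every} open cover has a countable subcover, and in particular every cover consisting of basic open sets does, since basic open sets are themselves open. So the entire content of the proposition lies in the converse.

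For the converse, I would assume that every cover by basic open sets admits a countable subcover and aim to show that an arbitrary open cover $\U$ of $X$ likewise does. The key idea is to pass from $\U$ to a refinement built from a fixed basis $\B$. First I would fix a basis $\B$ for the topology on $X$. Given any open $U\in\U$ and any point $x\in U$, the definition of a basis (together with Proposition \ref{LBB}) guarantees a basic set $B_{x,U}\in\B$ with $x\in B_{x,U}\subset U$. Collecting all such basic sets, I would form the family $\V=\{B_{x,U}: U\in\U,\ x\in U\}$. Since every point of $X$ lies in some $U\in\U$ and hence in some $B_{x,U}$, the family $\V$ is a cover of $X$ by basic open sets.

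By hypothesis, $\V$ therefore has a countable subcover, say $\V'=\{B_1,B_2,\ldots\}\subset\V$. Now each $B_i$ was chosen to sit inside some member of the original cover $\U$; by the axiom of (countable) choice I would select, for each $i$, a set $U_i\in\U$ with $B_i\subset U_i$. The crucial observation is that the countable family $\{U_i: i\in\N\}$ covers $X$: any $x\in X$ lies in some $B_i$ (as $\V'$ covers $X$), and $B_i\subset U_i$, so $x\in U_i$. Hence $\{U_i\}$ is a countable subcover of $\U$, completing the argument.

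The step I expect to require the most care is not any single computation but rather keeping the logical structure clean: the subcover we extract lives among the \emph{basic} refinements, and we must then transfer countability back to the \emph{original} cover via the containment $B_i\subset U_i$. The only subtlety is the appeal to choice when picking each $U_i$, which is harmless here since only countably many choices are needed. Everything else follows directly from the definition of a basis and Proposition \ref{LBB}.
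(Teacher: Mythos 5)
Your proof is correct. Note that the paper itself states Proposition \ref{basic} without proof, treating it as a standard fact, so there is no authorial argument to compare against; your argument---refine the arbitrary open cover $\U$ by basic sets, extract a countable basic subcover by hypothesis, then pass back to $\U$ via the containments $B_i\subset U_i$---is exactly the standard proof one would supply here. Both directions are handled properly, and your remark about countable choice is apt; if one wished to avoid choice in building the refinement, one could simply take $\V=\{B\in\B:\ B\subset U\ \text{for some}\ U\in\U\}$, the family of \emph{all} basic sets contained in some member of $\U$, but this is a cosmetic variation on what you wrote.
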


\begin{defn}
We say that a family $\F$ of nonempty subsets of $X$ has the \emph{countable intersection property} if any countable subfamily of $\F$ has a non-empty intersection.
\end{defn}

There is the following useful characterization of the \Lin property:

\begin{thm} A space X is Lindel\"of if and only if every family of closed nonempty subsets of X which has the countable intersection property has a non-empty intersection. \end{thm}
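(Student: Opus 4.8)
The plan is to prove the two implications separately, exploiting the standard duality between open covers and families of closed sets obtained by taking complements. Given a family $\F = \{F_i : i \in I\}$ of subsets of $X$, set $U_i = X \setminus F_i$. Then each $F_i$ is closed precisely when each $U_i$ is open, and the key set-theoretic identity I would use throughout is that $\bigcap_{i \in I} F_i = \emptyset$ if and only if $\{U_i : i \in I\}$ covers $X$, which is just De Morgan's law. More generally, a subfamily indexed by $J \subseteq I$ has $\bigcap_{i \in J} F_i = \emptyset$ exactly when $\{U_i : i \in J\}$ covers $X$.

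For the forward direction, I would assume $X$ is \Lind\ and take any family $\F$ of nonempty closed sets with the countable intersection property; the goal is to show $\bigcap \F \neq \emptyset$. I would argue by contraposition: suppose instead that $\bigcap \F = \emptyset$. Passing to complements, the family $\{U_i\}$ is then an open cover of $X$, so by the \Lind\ property it has a countable subcover $\{U_{i_n} : n \in \N\}$. Translating back through De Morgan's law, this says $\bigcap_{n} F_{i_n} = \emptyset$, i.e.\ some countable subfamily of $\F$ has empty intersection, contradicting the countable intersection property. Hence $\bigcap \F \neq \emptyset$.

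For the converse, I would assume the closed-set condition and show $X$ is \Lind. Let $\U = \{U_i\}$ be an arbitrary open cover. Setting $F_i = X \setminus U_i$ gives a family of closed sets whose total intersection is empty (since $\U$ covers $X$). By contraposition on the hypothesis: if every countable subfamily of $\{F_i\}$ had nonempty intersection, the whole family would have nonempty intersection, contradicting $\bigcap_i F_i = \emptyset$. Therefore some countable subfamily $\{F_{i_n}\}$ already has empty intersection, and its complements $\{U_{i_n}\}$ form a countable subcover of $\U$. A small bookkeeping point is to handle the empty sets: if some $F_i$ is itself empty the corresponding $U_i = X$ already covers, so I would restrict attention to families of \emph{nonempty} closed sets, matching the statement's hypothesis.

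I do not expect any genuine obstacle here; the proof is essentially a mechanical dualization. The one place demanding care is stating the contrapositive cleanly in each direction, since the phrase ``the countable intersection property fails'' must be correctly rendered as ``some countable subfamily has empty intersection'' and then matched with ``some countable subcollection of opens covers $X$.'' Getting these quantifiers to line up is the only subtlety, and it is routine.
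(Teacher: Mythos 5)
Your proposal is correct and follows essentially the same route as the paper: both directions are the standard dualization via complements and De Morgan's law, with the paper phrasing the converse as a proof by contradiction while you use the logically equivalent contrapositive of the closed-set hypothesis. If anything, your remark on handling empty sets (discarding $F_i = \emptyset$, since then $U_i = X$ is already a countable subcover) is a small point of care that the paper's own proof silently skips.
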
 

\begin{proof} 

Let $\F$ be a family of closed subsets of $X$ with the countable intersection property. 
Suppose that $\bigcap_{F\in\F}F=\emptyset$.
Then $\U=\{X\setminus F:F\in\F\}$ is an open cover for $X$.
Indeed, 
\begin{displaymath}
\bigcup_{U\in\U}U=\bigcup_{F\in\F}X\setminus F=X\setminus\bigcap_{F\in\F}F=X\setminus\emptyset=X.
\end{displaymath}
Since $X$ is Lindel\"of, there is a countable subcover $\U'\subset\U$; but this means that
\begin{displaymath}
\emptyset=X\setminus\bigcup_{U\in\U'}U=\bigcap_{U\in\U'}X\setminus U=\bigcap_{F\in\F'}F,
\end{displaymath}
contradicting the countable intersection property.
Hence $\bigcap_{F\in\F}F\neq\emptyset$.

Conversely, let $\U$ be an open cover of $X$. 
Then 
\begin{displaymath}
\F=\{X\setminus U: U\in\U\}
\end{displaymath}
 is a family of closed subsets of $X$. 
Suppose that $X$ is not Lindel\"of, so
for all countable subsets $\U'\subset\U$, there exists $x\in X$ with $x\in X\setminus\bigcup_{U\in\U'}U$.
Define 
\begin{displaymath}
\F'=\{X\setminus U: U \in \U'\}\subset\F.
\end{displaymath}
Then $x \in X\setminus \bigcup_{U\in\U'}U=\bigcap_{U\in\U'}(X\setminus U)=\bigcap_{F\in\F'}F$ - 
so $\F$ has the countable intersection property.
Thus we would have
\begin{displaymath}
\emptyset\neq\bigcap_{F\in\F}F=\bigcap_{U\in\U} X\setminus U=X\setminus\bigcap_{U\in\U}U=\emptyset
\end{displaymath}
- a contradiction. 
Hence, $X$ is Lindel\"of. 
\end{proof}

\subsection{The \Lin Property and the Main Topological Operations}

\begin{thm} 
\label{ClosedSubsetsofL}
Every closed subspace of a Lindel\"of space is Lindel\"of. \end{thm}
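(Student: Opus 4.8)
The plan is to prove that a closed subspace $F$ of a \Lin space $X$ is itself \Lind. The standard technique is to take an arbitrary open cover of $F$ (in the subspace topology), lift it to an open cover of $X$ by adjoining a single open set, apply the \Lin property of $X$ to extract a countable subcover, and then restrict that subcover back down to $F$. This is the direct covering argument, and it is short enough that the only real work is bookkeeping about the subspace topology.

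Concretely, let $F \subseteq X$ be closed and let $\U = \{U_i : i \in I\}$ be an open cover of $F$, where each $U_i$ is open in the subspace topology on $F$. By definition of the subspace topology, for each $i$ there is an open set $V_i$ in $X$ with $U_i = V_i \cap F$. First I would observe that since $F$ is closed in $X$, the complement $X \setminus F$ is open in $X$. The key step is then to form the collection
\begin{displaymath}
\W = \{V_i : i \in I\} \cup \{X \setminus F\},
\end{displaymath}
and check that $\W$ is an open cover of the whole space $X$: every point of $F$ lies in some $V_i$ because the $U_i$ cover $F$, and every point outside $F$ lies in $X \setminus F$.

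Next I would apply the \Lin property of $X$ to $\W$ to obtain a countable subcover $\W'$. After possibly discarding the set $X \setminus F$ from $\W'$ (which covers no point of $F$ anyway), what remains is a countable subcollection $\{V_{i_1}, V_{i_2}, \ldots\}$ whose union still contains $F$. Restricting back to $F$, the sets $U_{i_k} = V_{i_k} \cap F$ form a countable subfamily of the original cover $\U$, and since the $V_{i_k}$ cover $F$ we have
\begin{displaymath}
F = F \cap \bigcup_{k} V_{i_k} = \bigcup_{k} (V_{i_k} \cap F) = \bigcup_{k} U_{i_k},
\end{displaymath}
so $\{U_{i_k}\}$ is a countable subcover of $\U$. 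This shows every open cover of $F$ has a countable subcover, i.e.\ $F$ is \Lind.

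I do not expect any genuine obstacle here, since this mirrors the classical compactness argument. The only point requiring mild care is the translation between the subspace topology on $F$ and open sets in $X$: one must remember that the $U_i$ are open in $F$, not in $X$, and correctly produce the ambient open sets $V_i$ witnessing this before taking the union with $X \setminus F$. The role of closedness of $F$ is essential precisely because it guarantees $X \setminus F$ is open and may legitimately be added to the cover; this is the single place where the hypothesis is used. An alternative route would use the closed-set characterization with the countable intersection property proved just above, but the direct covering argument is cleaner and I would present that.
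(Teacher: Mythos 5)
Your proof is correct, and it is the standard direct covering argument: lift the subspace-open cover to ambient open sets, adjoin the open set $X \setminus F$, extract a countable subcover by Lindel\"ofness of $X$, and restrict back to $F$. The paper itself states this theorem without proof (it is a classical textbook fact), so there is nothing to diverge from; your argument is exactly the expected one, with the role of closedness correctly isolated.
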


However, arbitrary subspaces of a Lindel\"of space need not be Lindel\"of. 

\begin{xmpl}
$[0,\w_1]$ is a Lindel\"of space, but $[0,\w_1)\subset [0,\w_1]$ is not Lindel\"of. 
 \end{xmpl}  

\begin{proof} 
 Let  $\U=\{ U_{\al}: \al \in A\}$ be an open cover for $[0,\w_1]$; then there exists $U_{\alpha_{0}}$ such that $\w_{1} \in U_{\al_{0}}$. 
Then $U_{\alpha_{0}}$ contains an interval $(\g, \w_{1}]$ for some $\g<\w_{1}$. 
This leaves possibly only the set $[0,\g ]$ uncovered, which is countable, so we need countably many more elements of $\U$. 

However, as we saw in Example \ref{nLsubset}
the subspace $[0,\w_1)=[0,\w_1] \backslash \{\w_{1} \}$ is not \Lind.
 \end{proof}

This example justifies the following definition: 

\begin{defn} If every subspace of a topological space $X$ is Lindel\"of, then X is called \textit{hereditarily Lindel\"of}. \end{defn}

\begin{propn}
\label{herLlem} If every open subspace of X is Lindel\"of, then X is hereditarily Lindel\"of.
 \end{propn}

\begin{propn} The following results are straightforward:

\begin{enumerate}

\item
\label{ctsimg}The continuous image of a Lindel\"of space $X$ is Lindel\"of.

\item
Quotient spaces of Lindel\"of spaces are Lindel\"of.

\item
 Countable disjoint sums of Lindel\"of spaces are Lindel\"of.

\item
If $X$ is a countable union of Lindel\"of spaces then $X$ is Lindel\"of.

\end{enumerate}
\end{propn}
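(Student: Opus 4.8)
The plan is to handle all four parts by the same underlying mechanism — pull back a given open cover to a place where the Lindel\"of hypothesis applies, extract a countable subcover there, and reassemble — so that parts (2)--(4) become either corollaries of (1) or short cover-chasing arguments.

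For part (1), I would let $f:X\ra Y$ be continuous and surjective (replacing $Y$ by the subspace $f(X)$ if $f$ is merely continuous, so that ``image'' is understood with the subspace topology). Given an open cover $\U$ of $Y$, the family $\{f^{-1}(U):U\in\U\}$ consists of open sets by continuity, and it covers $X$ because every point of $X$ maps into some member of $\U$. Applying the Lindel\"of property of $X$ gives a countable subfamily $\{f^{-1}(U_n):n\in\N\}$ covering $X$; pushing forward and invoking surjectivity then shows $\{U_n:n\in\N\}$ covers $Y$, so $Y$ is Lindel\"of. Part (2) is then immediate, since a quotient map is by definition a continuous surjection, and hence (2) is a direct corollary of (1).

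For parts (3) and (4) I would argue directly but in parallel. In each case write $X$ as $\bigcup_n X_n$ with each $X_n$ Lindel\"of — for (3) the $X_n$ are the (clopen) summands, for (4) they are the given Lindel\"of subspaces — and let $\U$ be an open cover of $X$. For each fixed $n$, the family $\{U\intersect X_n:U\in\U\}$ is an open cover of $X_n$ in its subspace topology, so Lindel\"ofness of $X_n$ yields a countable subfamily $\U_n\subset\U$ with $X_n\subset\bigcup\U_n$. Then $\bigcup_n\U_n$ is a countable union of countable families, hence countable, and it covers $\bigcup_n X_n=X$; this is the desired countable subcover.

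The arguments are routine, so the only points requiring a little care — and the closest thing to an obstacle — are the bookkeeping details in (3) and (4): verifying that restricting $\U$ to $X_n$ really produces an open cover of $X_n$ in the \emph{subspace} topology (for the disjoint sum this is clear since each summand is open, and for a general subspace it is the definition), and using the elementary fact that a countable union of countable sets is again countable, so that the reassembled family $\bigcup_n\U_n$ is a legitimate countable subcover. Neither step presents a genuine difficulty.
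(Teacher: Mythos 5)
Your proof is correct, and since the paper states this proposition without proof (dismissing it as ``straightforward''), your argument supplies exactly the standard reasoning the paper leaves implicit: pulling back covers along a continuous surjection for (1), specializing to quotient maps for (2), and the restrict-and-reassemble cover argument for (3) and (4). No gaps; the care you take over extracting $\U_n\subset\U$ from the subspace cover and over countability of the union is precisely the bookkeeping these results require.
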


\subsection{The Lindel\"of Property and Separation Axioms}

There are some basic relations between separation axioms in a \Lin topological space.
 We have the following very interesting theorem:
\begin{thm} 
\label{regular+L=normal}
Every regular Lindel\"of space $X$ is normal.
\end{thm}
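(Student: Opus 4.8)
The plan is to prove that a regular Lindel\"of space $X$ is normal by taking two disjoint closed sets $F$ and $K$ and constructing the required disjoint open sets via a countable ``expanding union'' argument. The key tool will be Proposition \ref{regularlemma}: in a regular space, every point in an open set can be surrounded by an open set whose closure stays inside. This lets me shrink open neighborhoods so that their closures avoid the other closed set.

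First I would note that since $X$ is Lindel\"of and $F,K$ are closed, both $F$ and $K$ are Lindel\"of by Theorem \ref{ClosedSubsetsofL}. For each $x\in F$, because $x\notin K$ and $K$ is closed, $x$ lies in the open set $X\setminus K$; by regularity (Proposition \ref{regularlemma}) I can choose an open $U_x$ with $x\in U_x\subseteq\bar{U_x}\subseteq X\setminus K$, so that $\bar{U_x}\cap K=\emptyset$. The family $\{U_x:x\in F\}$ is an open cover of the \Lin space $F$, so it has a countable subcover $\{U_n:n\in\N\}$ with each $\bar{U_n}\cap K=\emptyset$. Symmetrically, I would cover $K$ by countably many open sets $\{V_m:m\in\N\}$ with each $\bar{V_m}\cap F=\emptyset$. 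At this stage $\bigcup_n U_n\supseteq F$ and $\bigcup_m V_m\supseteq K$, but these two unions need not be disjoint, which is exactly the obstacle the construction must overcome.

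The main step, and the part I expect to be the real work, is the standard ``subtracting closures'' trick used to force disjointness. I would define
\begin{displaymath}
U_n'=U_n\setminus\bigcup_{i\leq n}\bar{V_i},\qquad V_m'=V_m\setminus\bigcup_{j\leq m}\bar{U_j}.
\end{displaymath}
Each $U_n'$ and $V_m'$ is open, being the difference of an open set and a finite union of closed sets. Setting $U=\bigcup_n U_n'$ and $V=\bigcup_m V_m'$, I claim $U$ and $V$ are the desired disjoint open neighborhoods. They are open as unions of open sets, and $F\subseteq U$ since each $x\in F$ lies in some $U_n$ but in no $\bar{V_i}$ (because every $\bar{V_i}\cap F=\emptyset$), so $x\in U_n'$; similarly $K\subseteq V$.

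The disjointness is the crux. Suppose some point $z$ lay in both $U$ and $V$; then $z\in U_n'$ and $z\in V_m'$ for some $n,m$. Without loss of generality take $n\leq m$. From $z\in U_n'$ we get $z\in U_n\subseteq\bar{U_n}$, but $z\in V_m'=V_m\setminus\bigcup_{j\leq m}\bar{U_j}$ means $z\notin\bar{U_j}$ for all $j\leq m$, and in particular $z\notin\bar{U_n}$ since $n\leq m$ — a contradiction. The symmetric argument handles $m<n$. Hence $U\cap V=\emptyset$, and since $F\subseteq U$, $K\subseteq V$ with $U,V$ disjoint and open, $X$ is normal. The only subtlety to get right is the bookkeeping on the indices in the final step, ensuring that whichever index is smaller forces the point out of the corresponding closure.
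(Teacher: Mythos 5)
Your proof is correct and complete. Note that the paper itself states Theorem \ref{regular+L=normal} without any proof (it is invoked later, e.g.\ to conclude normality of the Sorgenfrey line, but never argued), so there is no in-paper argument to compare against; what you have written is the classical proof of this result. All the essential steps check out: regularity lets you shrink neighborhoods of points of $F$ so their closures miss $K$ (and symmetrically), closedness of $F$ and $K$ in the Lindel\"of space $X$ gives the countable subcovers $\{U_n\}$ and $\{V_m\}$, the sets $U_n'$ and $V_m'$ are open as an open set minus a finite union of closed sets, coverage of $F$ and $K$ survives the subtraction precisely because $\bar{V_i}\cap F=\emptyset$ and $\bar{U_j}\cap K=\emptyset$, and the index comparison in the disjointness step is handled correctly in both cases $n\leq m$ and $m<n$. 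One small remark: Proposition \ref{regularlemma} as printed in the paper literally states only the implication from the neighborhood-shrinking property to regularity, whereas you use the converse direction (regular implies shrinking); that direction is the standard, elementary half of the equivalence the paper surely intends, so this is a citation quibble rather than a gap in your argument.
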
 
 However, even though in Lindel\"of spaces regularity entails normality, in such spaces we have that Hausdorffness does not entail regularity, as the following example shows.

\begin{xmpl} There exists a countable, Hausdorff not regular topogical space $X$ (hence, $X$ is a Lindel\"of, Hausdorff not regular topological space). 
\end{xmpl}
 This is the well-known Irrational Slope topology (example 75,  \cite{SS}). 
We shall provide detailed proofs of all properties of this example, slightly restructure the proof that it is  not regular, and provide illustrations for the core steps.

\begin{construction}

Let $X= \{(x,y) : y \geq 0, x,y \in \Q\}$; in other words, 
$X$ consists of all points with both coordinates rational in the closed upper half-plane of $\R ^2$. 
Since $X$ is a product of two countable sets (two copies of $\Q$), $X$ is countable, and hence Lindel\"of in any topology on $X$. 

From now on, to avoid ambiguity in notation, we write
$
\langle a,b \rangle
$
for $(a,b)\times\{0\}$, where $(a,b)$ is an open interval in $\R$, and we identify $\Q\times\{0\}$ with $\Q$.

Now, we topologise $X$ as follows.

Let $(x,y) \in X$ and $\epsilon > 0$. 
Define an $\epsilon$-neighborhood of $(x,y)$ as follows:
\begin{displaymath}
N_\epsilon (x,y) = \{(x,y)\} 
\cup \left( \langle x+ y \sqrt{2} - \epsilon, x+ y \sqrt{2} + \epsilon \rangle \cap \Q \right) 
\cup \left( \langle x-y\sqrt{2} - \e, x-y\sqrt{2} + \e\rangle \cap \Q \right),
\end{displaymath}
in other words, each $N_\e (x,y)$ consists of $\{(x,y)\}$ plus two intervals on the rational $x$-axis centred at the two irrational points $x \pm y\sqrt{2}$. 
Denote $\langle x\pm y\sqrt{2} - \e, x\pm y\sqrt{2} + \e\rangle \cap \Q$ by $ B_\e (x\pm y\sqrt{2})$. 
Note that the lines joining $(x,y)$ with these points have slopes $\pm \frac{\sqrt{2}}{2}$, respectively. 

%
%
\setlength{\unitlength}{3947sp}%
\begingroup\makeatletter\ifx\SetFigFont\undefined%
\gdef\SetFigFont#1#2#3#4#5{%
  \reset@font\fontsize{#1}{#2pt}%
  \fontfamily{#3}\fontseries{#4}\fontshape{#5}%
  \selectfont}%
\fi\endgroup%
\begin{picture}(7584,4508)(416,-3897)
{\color[rgb]{1,0,0}\thinlines
\put(4542,-1128){\circle{52}}
}%
{\color[rgb]{1,0,0}\put(2852,-2858){\circle{52}}
}%
{\color[rgb]{1,0,0}\put(6242,-2852){\circle{52}}
}%
{\color[rgb]{0,0,0}\put(428,-2861){\vector( 1, 0){7560}}
}%
\put(3058,-3226){\makebox(0,0)[lb]{\smash{{\SetFigFont{10}{12.0}{\rmdefault}{\mddefault}{\updefault}{\color[rgb]{1,0,0}$x-y\2+\e$}%
}}}}
{\color[rgb]{0,.82,0}\put(2860,-2829){\line( 1, 1){1691}}
\put(4538,-1125){\line( 1,-1){1703.500}}
}%
\thicklines
{\color[rgb]{1,0,0}\multiput(2565,-2851)(44.96000,0.00000){13}{\line( 1, 0){ 22.480}}
\put(3127,-2851){\vector( 1, 0){0}}
\put(2565,-2851){\vector(-1, 0){0}}
}%
{\color[rgb]{1,0,0}\multiput(5960,-2851)(44.96000,0.00000){13}{\line( 1, 0){ 22.480}}
\put(6522,-2851){\vector( 1, 0){0}}
\put(5960,-2851){\vector(-1, 0){0}}
}%
\put(4365,-993){\makebox(0,0)[lb]{\smash{{\SetFigFont{11}{13.2}{\rmdefault}{\mddefault}{\updefault}{\color[rgb]{1,0,0}$(x,y)$}%
}}}}
\put(2671,-3030){\makebox(0,0)[lb]{\smash{{\SetFigFont{11}{13.2}{\rmdefault}{\mddefault}{\updefault}{\color[rgb]{1,0,0}$x-y\2$}%
}}}}
\put(6043,-3029){\makebox(0,0)[lb]{\smash{{\SetFigFont{11}{13.2}{\rmdefault}{\mddefault}{\updefault}{\color[rgb]{1,0,0}$x+y\2$}%
}}}}
\put(1957,-3226){\makebox(0,0)[lb]{\smash{{\SetFigFont{10}{12.0}{\rmdefault}{\mddefault}{\updefault}{\color[rgb]{1,0,0}$x-y\2-\e$}%
}}}}
\put(5371,-3256){\makebox(0,0)[lb]{\smash{{\SetFigFont{10}{12.0}{\rmdefault}{\mddefault}{\updefault}{\color[rgb]{1,0,0}$x+y\2-\e$}%
}}}}
\thinlines
{\color[rgb]{0,0,0}\put(1591,-3885){\vector( 0, 1){4484}}
}%
\thicklines
{\color[rgb]{1,0,0}\multiput(2565,-2851)(44.96000,0.00000){13}{\line( 1, 0){ 22.480}}
\put(3127,-2851){\vector( 1, 0){0}}
\put(2565,-2851){\vector(-1, 0){0}}
}%
{\color[rgb]{1,0,0}\thinlines
\put(4542,-1128){\circle{52}}
}%
{\color[rgb]{1,0,0}\put(2852,-2858){\circle{52}}
}%
{\color[rgb]{1,0,0}\put(6242,-2852){\circle{52}}
}%
{\color[rgb]{0,0,0}\put(428,-2861){\vector( 1, 0){7560}}
}%
\put(4365,-993){\makebox(0,0)[lb]{\smash{{\SetFigFont{11}{13.2}{\rmdefault}{\mddefault}{\updefault}{\color[rgb]{1,0,0}$(x,y)$}%
}}}}
{\color[rgb]{0,.82,0}\put(2860,-2829){\line( 1, 1){1691}}
\put(4538,-1125){\line( 1,-1){1703.500}}
}%
\thicklines
{\color[rgb]{1,0,0}\multiput(5960,-2851)(44.96000,0.00000){13}{\line( 1, 0){ 22.480}}
\put(6522,-2851){\vector( 1, 0){0}}
\put(5960,-2851){\vector(-1, 0){0}}
}%
\put(6458,-3257){\makebox(0,0)[lb]{\smash{{\SetFigFont{10}{12.0}{\rmdefault}{\mddefault}{\updefault}{\color[rgb]{1,0,0}$x+y\2+\e$}%
}}}}
\put(2979,-2596){\rotatebox{45.0}{\makebox(0,0)[lb]{\smash{{\SetFigFont{12}{14.4}{\rmdefault}{\mddefault}{\updefault}{\color[rgb]{0,.69,0}$y=\frac{\2}{2}x+c_0$}%
}}}}}
\put(4734,-1231){\rotatebox{315.0}{\makebox(0,0)[lb]{\smash{{\SetFigFont{12}{14.4}{\rmdefault}{\mddefault}{\updefault}{\color[rgb]{0,.69,0}$y=-\frac{\2}{2}x+c_1$}%
}}}}}
\thinlines
{\color[rgb]{0,0,0}\put(1591,-3885){\vector( 0, 1){4484}}
}%
{\color[rgb]{0,.69,0}\put(3058,-2796){\oval( 18, 98)[br]}
\put(3007,-2796){\oval(120,232)[tr]}
}%
\put(3085,-2770){\makebox(0,0)[lb]{\smash{{\SetFigFont{10}{12.0}{\rmdefault}{\mddefault}{\updefault}{\color[rgb]{0,.69,0}$\frac{\pi}{4}$}%
}}}}
\end{picture}%

Note that if $y=0$ (in other words, 
 $(x,0) \in X$) then its neighborhood consists of rational points in $\langle x-\e, x+\e\rangle$; 
in other words 
 $N_\e (x,0) = B_\e (x) (=\langle x-\e, x+\e\rangle \cap \Q)$.

Define a topology $\tau$ on $X$ as follows:

$U \subset X$ is open if and only if 
 for every point $(x,y) \in U$ there is $\e > 0$ such that $N_\e (x,y) \subset U$.
This is indeed a topology on $X$, since 
$
\B = \{N_\e (x,y) : 
 \e>0\}
$
is a system of local neighborhood bases 
for each $(x,y) \in X$, i.e.
\begin{enumerate}
\item
$\bigcup \B = X$,
\item
For each $(x,y) \in X$ and $\e_1, \ \e_2>0$, $N_{\e_1} (x,y) \cap N_{\e_2} (x,y) = N_{\min\{\e_1,\e_2\}} (x,y)$,
\item
Let $(x,y) \in N_{\e_1} (x_1,y_1) \cap N_{\e_2} (x_2,y_2)$ (we assume $(x_1,y_1)\neq (x_2,y_2)$). 
Then the only possibility for $(x,y)$ is $y=0$, 
since if $(x_1,y_1) \neq (x_2,y_2)$ then $N_{\e_1} (x_1,y_1) \cap N_{\e_2} (x_2,y_2) \subset \Q$.

\includegraphics[width=13cm, trim=0cm 7cm 0cm 0cm, clip=true]{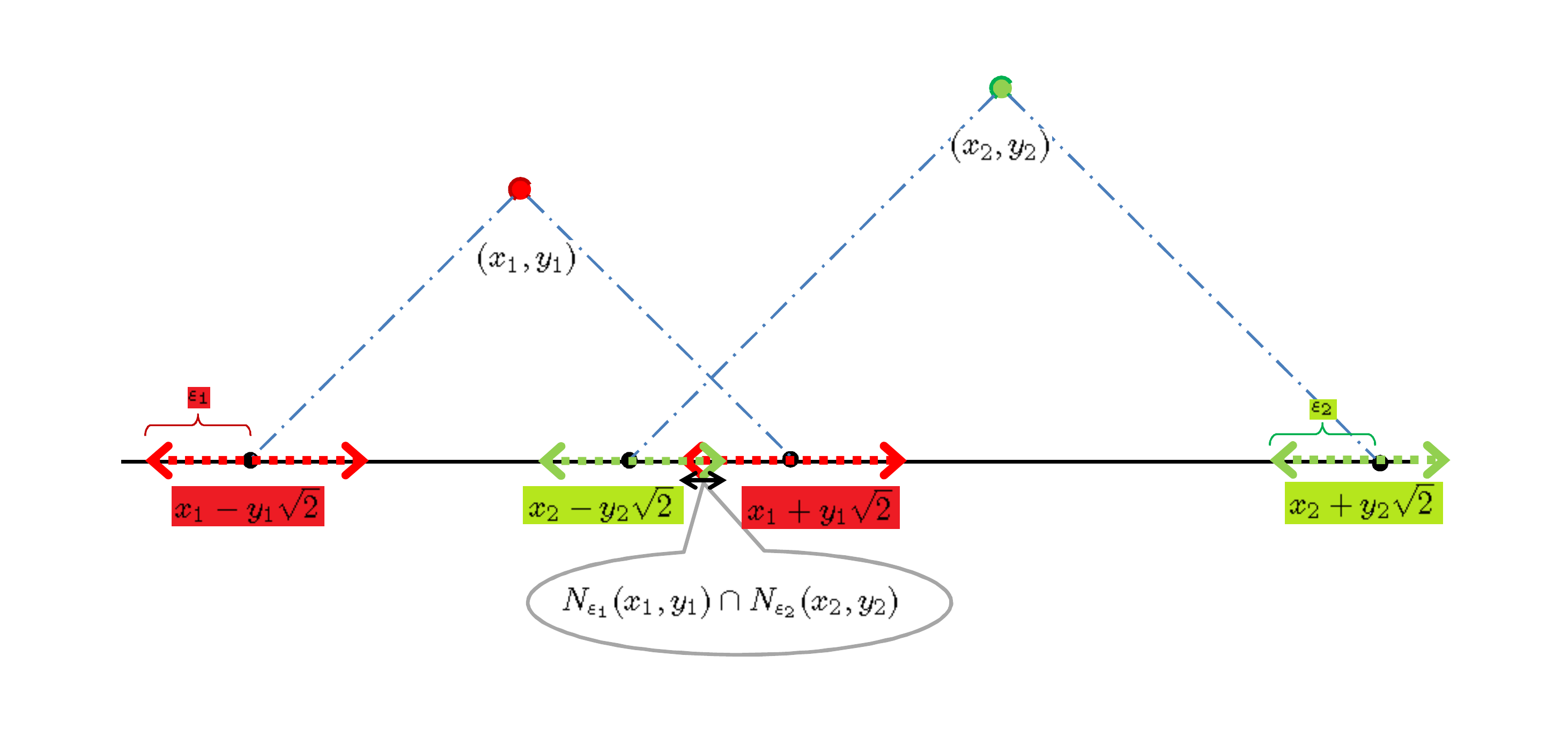}

That means that $(x,0)$ is in a rational open interval $\langle x-\e,x+\e\rangle \cap \Q = B_\e (x,y)$, where 
\begin{displaymath}
B_\e (x,y) \subset 
[ B_{\e_1} (x_1-y_1 \sqrt{2})\cup B_{\e_1} (x_1+y_1 \sqrt{2})]
\cap[B_{\e_2} (x_2-y_2\sqrt{2} ) \cup B_{\e_2} (x_2+y_2\sqrt{2} ) ]
\subset N_{\e_1} (x_1, y_1 ) \cap N_{\e_2} (x_2,y_2)
\end{displaymath}
\end{enumerate}
\end{construction}

\begin{cl}X is Hausdorff.
\end{cl}

\begin{proof}
Let $(x_1,y_1),(x_2,y_2)$ be two distinct points in $X$.
We have three cases:

\begin{case}[1]
 $(x_1,0) \neq (x_2,0)$

So we have two distinct points in $\Q$. Since $\Q$ is Hausdorff in its relative Euclidean topology, there is $\e > 0$ such that 
\begin{displaymath}
B_\e (x_1) \cap B_\e (x_2) = (\langle x_1 - \e, x_1 + \e\rangle \cap \Q) \cap (\langle x_2 - \e, x_2 +\e \rangle \cap \Q) = \emptyset.
\end{displaymath}
\end{case}
\begin{case}[2]
 $(x_1,0) \neq (x,y), \ y>0$.

\includegraphics[width=13cm, trim=0cm 7cm 0cm 0cm, clip=true]{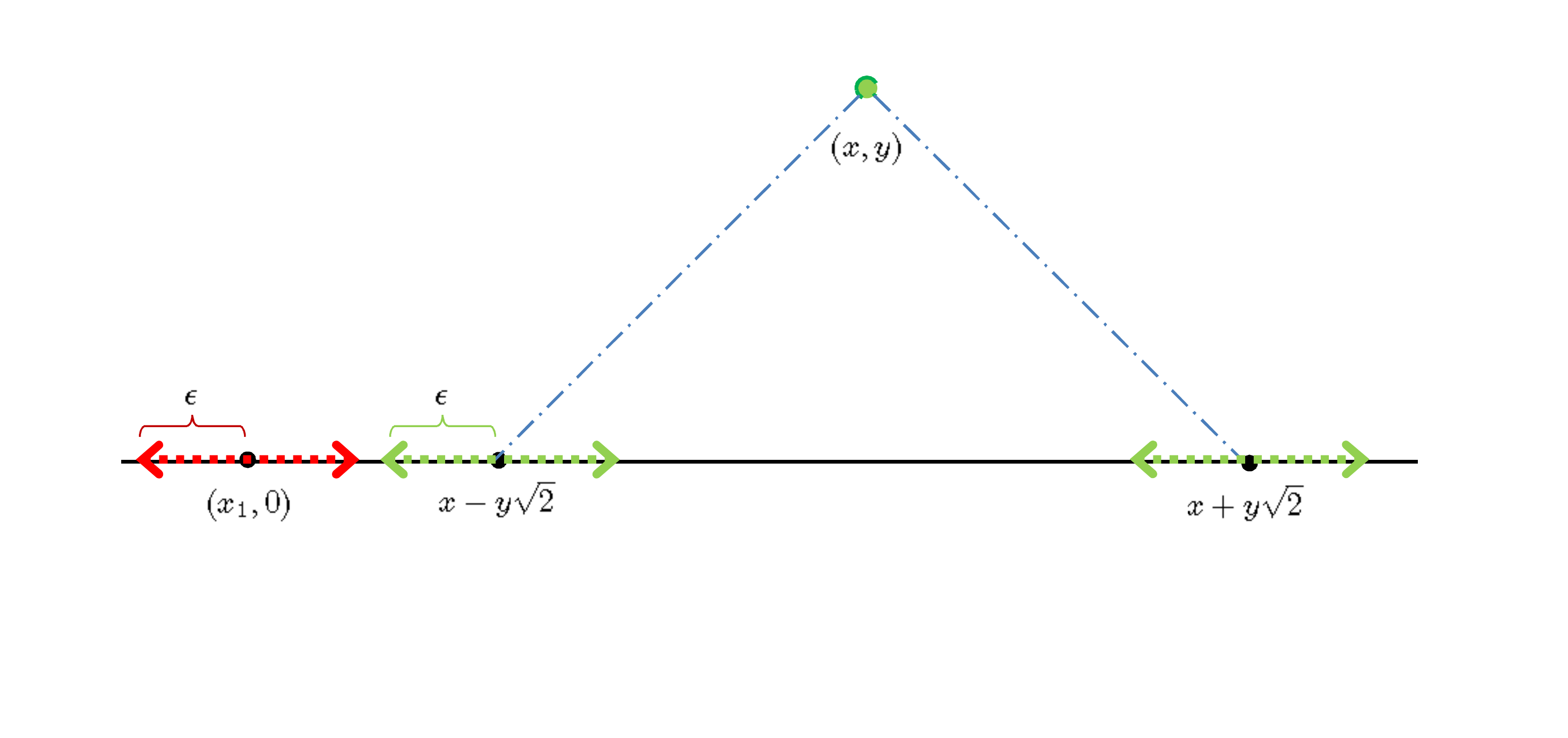}

Since $x_1$ is rational and $x\pm \sqrt{2}$ are irrationals, $x_1 \neq x \pm \sqrt{2}$. 
Because $\R$ is Hausdorff there is $\e > 0 $ such that 
\begin{displaymath}
\langle x-y\sqrt{2} - \e , x-y\sqrt{2} + \e\rangle \cap \langle x_1 -\e, x_1 +\e\rangle \cap \langle x+y\sqrt{2} - \e,x+y\sqrt{2} + \e\rangle = \emptyset.
\end{displaymath}
Then
$
N_\e (x_1,0) \cap N_\e (x,y) = \emptyset.
$

\end{case}
\begin{case}[3]
 $(x_1,y_1) \neq (x_2,y_2)$, $y_1 > 0, \ y_2 > 0$.

Recall that neighborhoods of $(x_1,y_1)$ and $(x_2,y_2)$ may have common points only in $\Q \subset \R$. 
Then $x_1\pm y_1\sqrt{2}, \ x_2 \pm y_2\sqrt{2}$ are four different points in $\R$:

Indeed, suppose (without loss of generality) that
$x_1+y_1\2=x_2-y_2\2$.

\includegraphics[width=13cm, trim=0cm 7cm 0cm 0cm, clip=true]{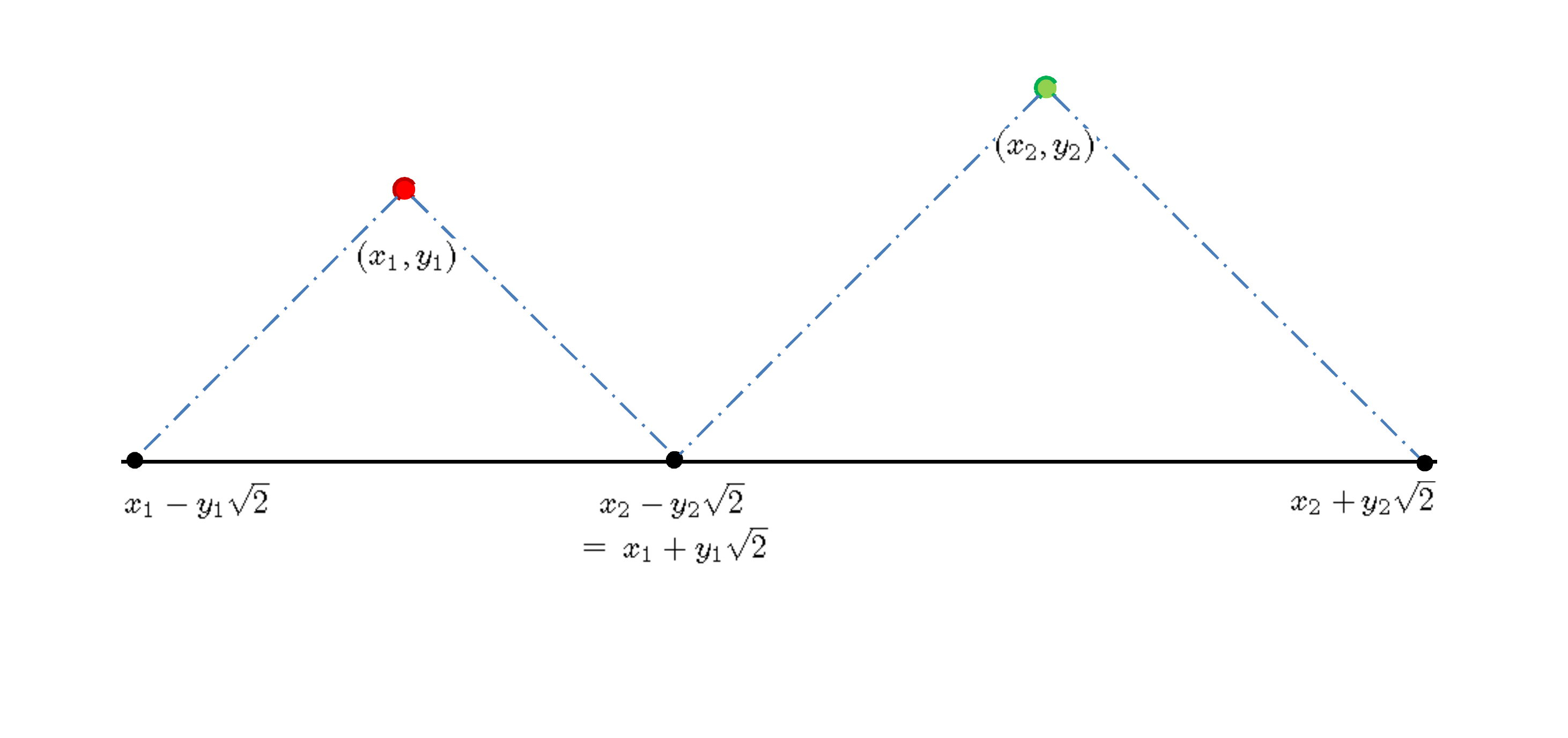}

Then $(y_1+y_2)\sqrt{2} = x_2-x_1$.
As $y_1,\ y_2 \in \Q$, the left hand side is in $\R \setminus \Q$. 
The right-hand side is in $\Q$ - contradiction. 
Since $\R$ is Hausdorff, there is $\e > 0 $ such that $\e$-neighborhoods of $x_i\pm y_i\sqrt{2}$ do not intersect.
Then $N_\e (x_1,y_1) \cap N_\e (x_2,y_2) = \emptyset$.
\end{case}
Hence, $X$ is Hausdorff. 
\end{proof}

\begin{cl}X is not regular.
\end{cl}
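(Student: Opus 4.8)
The plan is to use the shrinking characterisation of regularity in Proposition~\ref{regularlemma}: to show that $X$ is \emph{not} regular it suffices to produce one open set $W$ and one point $p\in W$ for which \emph{no} open $V$ satisfies $p\in V\subseteq\bar V\subseteq W$. I would take the origin $p=(0,0)$ and the basic neighbourhood $W=N_1(0,0)=B_1(0)$, which lies entirely on the rational $x$-axis. Equivalently, this exhibits the point $(0,0)$ together with the closed set $F=X\setminus B_1(0)$, and shows they cannot be separated by disjoint open sets.

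The heart of the argument is a lower bound on closures of axis neighbourhoods, namely that for every $\delta>0$ the set $\bar{B_\delta(0)}$ contains points with strictly positive second coordinate. First I would record that a point $(x,y)$ lies in $\bar{B_\delta(0)}$ exactly when every basic neighbourhood $N_\eta(x,y)$ meets $B_\delta(0)$; since $N_\eta(x,y)$ contains the rational intervals $B_\eta(x\pm y\2)$ and $\Q$ is dense, this holds, for instance, as soon as one of the irrational abscissae $x\pm y\2$ lies in the open interval $(-\delta,\delta)$. I would then simply exhibit such an off-axis point: fix a small rational $y>0$ and choose a rational $x$ in the nonempty open interval $(-\delta-y\2,\ \delta-y\2)$, so that $x+y\2\in(-\delta,\delta)$. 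Then $(x,y)\in\bar{B_\delta(0)}$, while $y>0$ forces $(x,y)\notin B_1(0)$.

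To finish, I would take any open $V$ with $(0,0)\in V$. Openness gives some $\delta>0$ with $B_\delta(0)\subseteq V$, and monotonicity of closure yields $\bar{B_\delta(0)}\subseteq\bar V$. By the previous step $\bar V$ then contains a point off the $x$-axis, so $\bar V\not\subseteq W=B_1(0)$. As $V$ was arbitrary, the shrinking condition of Proposition~\ref{regularlemma} fails at $(0,0)$, and hence $X$ is not regular.

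I expect the main obstacle to be the closure computation in the middle paragraph, specifically justifying that off-axis points genuinely belong to $\bar{B_\delta(0)}$. This is precisely where the geometry of the topology bites: a neighbourhood of a point $(x,y)$ ``projects'' down along the two lines of slope $\pm\tfrac{\2}{2}$ onto the irrational points $x\pm y\2$, and the density of $\Q$ together with the freedom to pick $x$ rational makes these projections land inside $(-\delta,\delta)$ no matter how small $\delta$ is. The same irrationality of $\2$ that earlier made $X$ Hausdorff is what now fattens the closures upward into the half-plane and destroys regularity.
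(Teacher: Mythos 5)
Your proof is correct, and it takes a genuinely different and more economical route than the paper. The paper determines $\bar{N_{\e_0}(x_0,y_0)}$ \emph{exactly} for every basic neighbourhood (the neighbourhood together with the four strips along the lines of slope $\pm\frac{\2}{2}$ through $x_0\pm y_0\2$), and then shows that the closures of \emph{any two} basic neighbourhoods intersect; this establishes the stronger fact that $X$ is not Urysohn, and non-regularity follows because regular Hausdorff spaces are Urysohn. You instead work at a single point with a one-sided estimate: for every $\delta>0$ the closure $\bar{B_\delta(0)}$ contains a point with positive second coordinate (take rational $y>0$ and rational $x\in(-\delta-y\2,\ \delta-y\2)$, so that $x+y\2\in(-\delta,\delta)$ and every $N_\eta(x,y)$ meets $B_\delta(0)$ by density of $\Q$), whence no open $V\ni(0,0)$ can satisfy $\bar{V}\subseteq B_1(0)$. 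This is shorter and needs no exact closure computation, but it yields only non-regularity, whereas the paper's computation buys the stronger non-Urysohn conclusion. One technicality: Proposition~\ref{regularlemma} is stated in the paper only as a \emph{sufficient} condition for regularity, while you invoke its (standard, easy) converse, that regularity implies the shrinking property. Your parenthetical reformulation with $F=X\setminus B_1(0)$ already sidesteps this: if disjoint open sets $U\ni(0,0)$ and $U'\supseteq F$ existed, then $\bar{U}\subseteq X\setminus U'\subseteq B_1(0)$, contradicting the fact that $\bar{U}\supseteq\bar{B_\delta(0)}$ contains off-axis points; so no appeal to the unstated converse is actually needed.
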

\begin{proof}
In order to see that $X$ is not regular, let us look at the closure of a basic neighborhood - $\bar{N_{\e_0} (x_0,y_0)}$, for an arbitrary $(x_0,y_0) \in X$ and $\e>0$.

Let us first define the following sets (strips in $\R^2 _+$), in the case when $y_0 > 0$. 

\begin{align}
\nonumber
S_{0} &=\{ (x,0) \in X : x \in B_{\e_0} (x_0-y_0\2) \cup B_{\e_0}(x_0+y_o\2) \}, \\
\nonumber
S_{1} &= \{(x,y) \in X : y>0,\ |x_0+y_0\sqrt{2} - (x-y\2)|<\e_0\},\\
\nonumber
S_{2} &= \{(x,y) \in X : y>0, \ |x_0-y_0\2 - (x-y\2)|< \e_0\},\\
\nonumber
S_{3} &= \{(x,y) \in X : y>0,\ |x_0+y_0\sqrt{2} - (x+y\2)|<\e_0\},\\
\nonumber
S_{4} &=\{(x,y) \in X : y>0, \ |x_0-y_0\2 - (x+y\2)|< \e_0\}. 
\end{align}

For simplicity the picture will show $S_i$ for a fixed point $(x,y)>0,\ \e_0>0$.

\includegraphics[width=13cm]{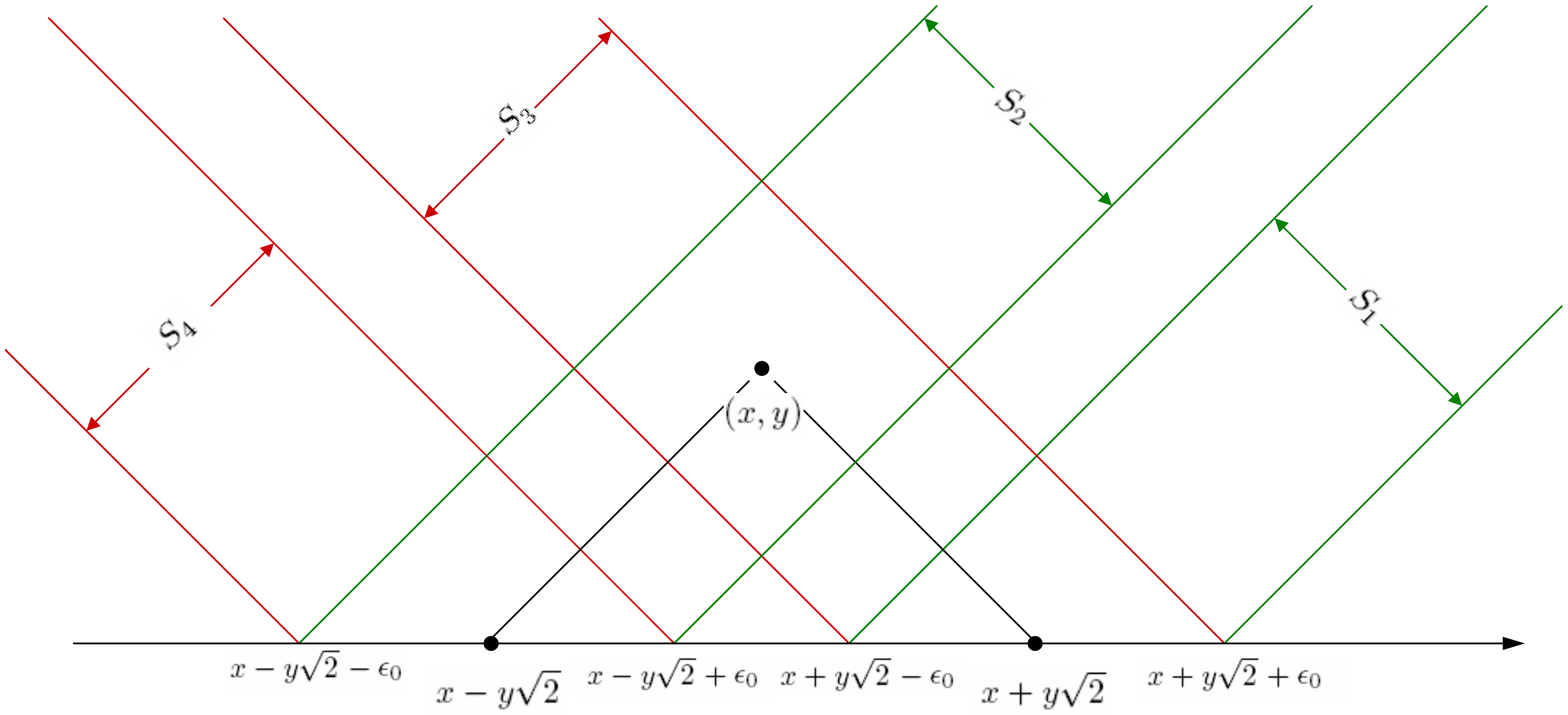}

We shall prove that 
\begin{displaymath}
\bar{N_{\e_0} (x_0,y_0)} = S_0\cup S_1\cup S_2\cup S_3\cup S_4.
\end{displaymath}

Let us show that $S_0\cup S_1\cup S_2\cup S_3\cup S_4 \subseteq \bar{N_{\e_0} (x_0,y_0)} $.

Let $(x,0) \in S_0$ and without loss of generality let $(x,0) \in B_{\e_0} (x_0-y_0\2)$.

\includegraphics[width=13cm, trim=0cm 7cm 0cm 0cm, clip=true]{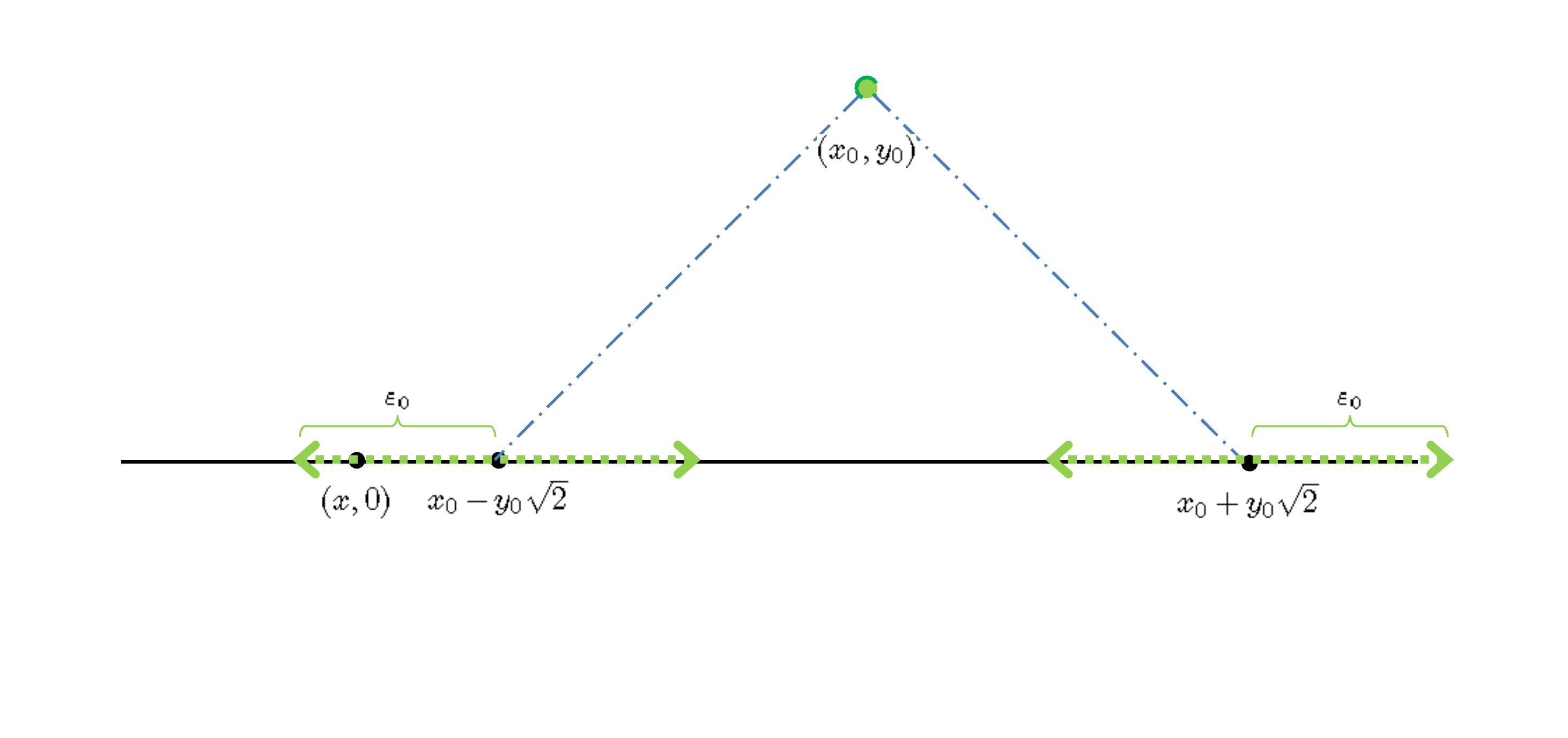}

Then for all $\e>0$, $N_\e (x,0) \cap N_{\e_0} (x_0,y_0) \neq \emptyset$, because $N_\e (x,0) =B_\e (x)$ and $B_\e(x) \cap B_{\e_0} (x_0-y_0\2) \neq \emptyset$ and $B_\e(x) \cap B_{\e_0} (x_0-y_0\2) \subset N_{\e_0} (x_0,y_0)$. 
Let $(x,y) \in S_1$: 

\includegraphics[width=13cm, trim=0cm 7cm 0cm 0cm, clip=true]{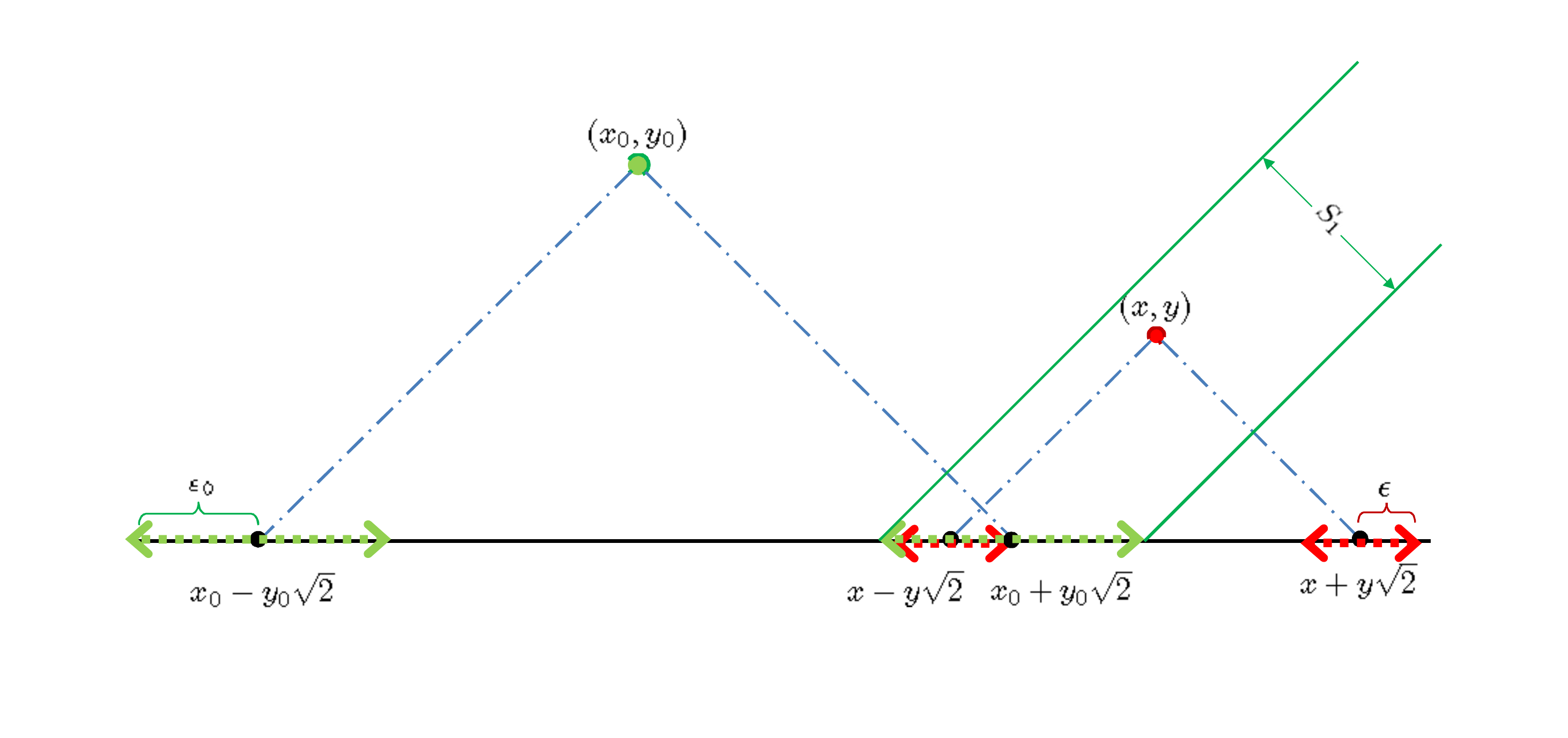}

$(x,y) \in S_1$, hence, for every $\e>0$ we have that $N_\e (x-y\2) \cap B_{\e_0} (x_0+y_0\2) \neq \emptyset$. 
Hence, for every $\e>0$ we have that $B_\e (x,y) \cap N_{\e_0} (x_0,y_0) \neq \emptyset$.
\\

Cases when $(x,y) \in S_2$ ($S_3, \ S_4$) are analogous.

Hence 
\begin{displaymath}
S_0\cup S_1\cup S_2\cup S_3\cup S_4 \subseteq \bar{N_{\e_0} (x_0,y_0)}.
\end{displaymath}

For the converse, let us prove that if $(x,y) \notin S_0\cup S_1\cup S_2\cup S_3\cup S_4 $, then there is $\e>0$ such that $N_\e (x,y) \cap B_{\e_0} (x_0,y_0) = \emptyset$, i.e.
$(x,y) \notin \bar{N_{\e_0} (x_0,y_0)}$. 

\includegraphics[width=13cm, trim=0cm 7cm 0cm 0cm, clip=true]{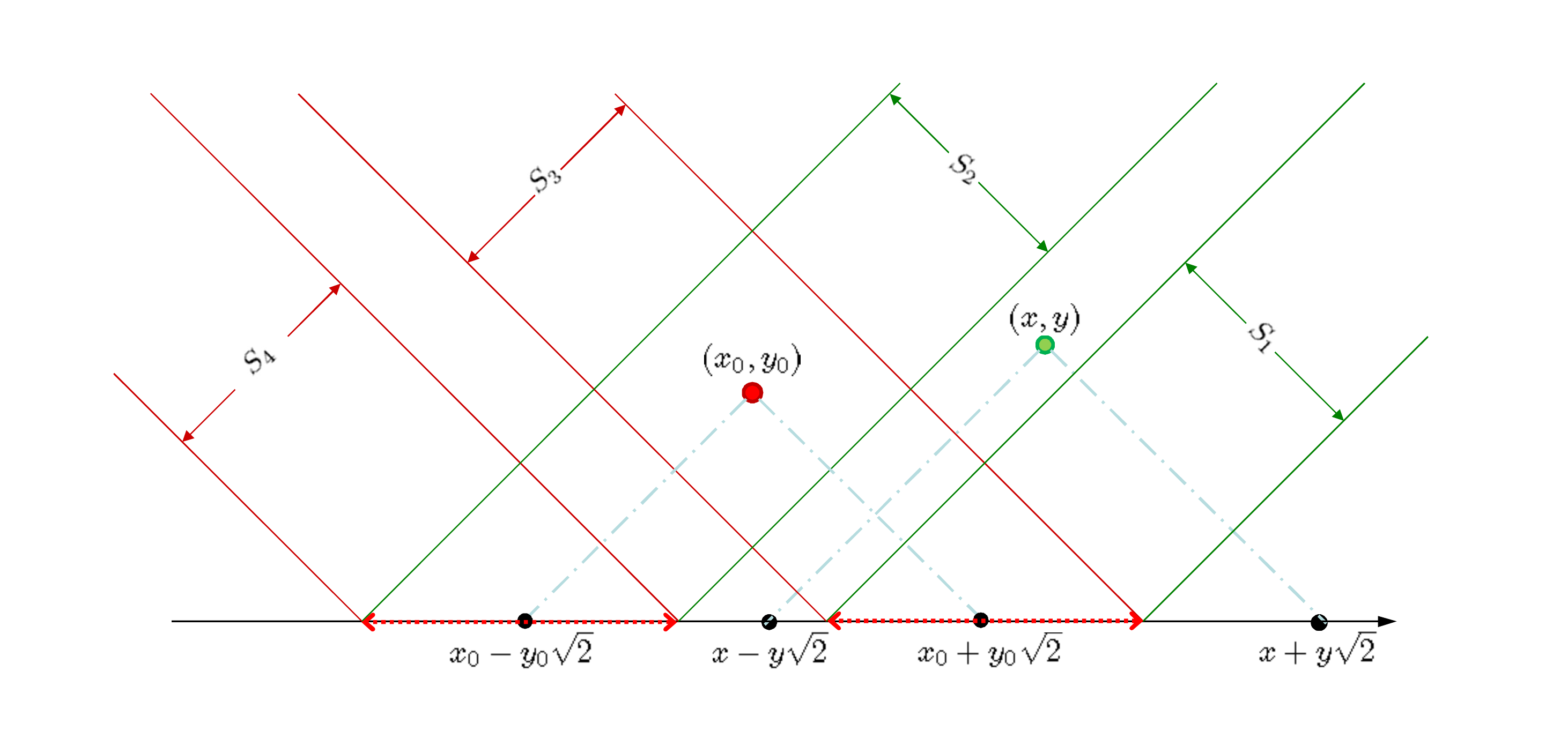}

Since $(x,y) \notin S_0\cup S_1\cup S_2\cup S_3\cup S_4$, we have that all four points $x \pm y\2$, $x_0\pm y_0\2$ are four different points in $\R$ and $\R$ is Hausdorff. 
So, we can find $\e>0$ such that 

\begin{displaymath}
\left( B_\e (x-y\2) \cup B_\e (x+y\2) \right) \cap \left( B_{\e_0} (x_0-y_0 \2) \cup B_{\e_0} (x_0+y_0 \2) \right) \neq \emptyset.
\end{displaymath}
Then $N_\e (x,y) \cap N_{\e_0} (x_0,y_0) = \emptyset$.
Hence $\bar{N_{\e_0} (x_0,y_0)} = S_0\cup S_1\cup S_2\cup S_3\cup S_4$ 
for every $\e_0 >0$ and $(x_0,y_0) \in X$, $y_0>0$.
\\

For $(x_0,0)$, $\bar{N_{\e_0} (x_0,0)} = P_1 \cup P_2 \cup B_{\e_0} (x_0)$, where

\begin{align}
\nonumber
P_1 &= \{(x,y) \in X : y>0,\ |x+y\sqrt{2} - x_0|<\e_0\}\\
\nonumber
P_{2} &= \{(x,y) \in X : y>0, \ |x-y\2 - x_0|< \e_0\}\\
\nonumber
\end{align}

\includegraphics[width=13cm, trim=0cm 7cm 0cm 0cm, clip=true]{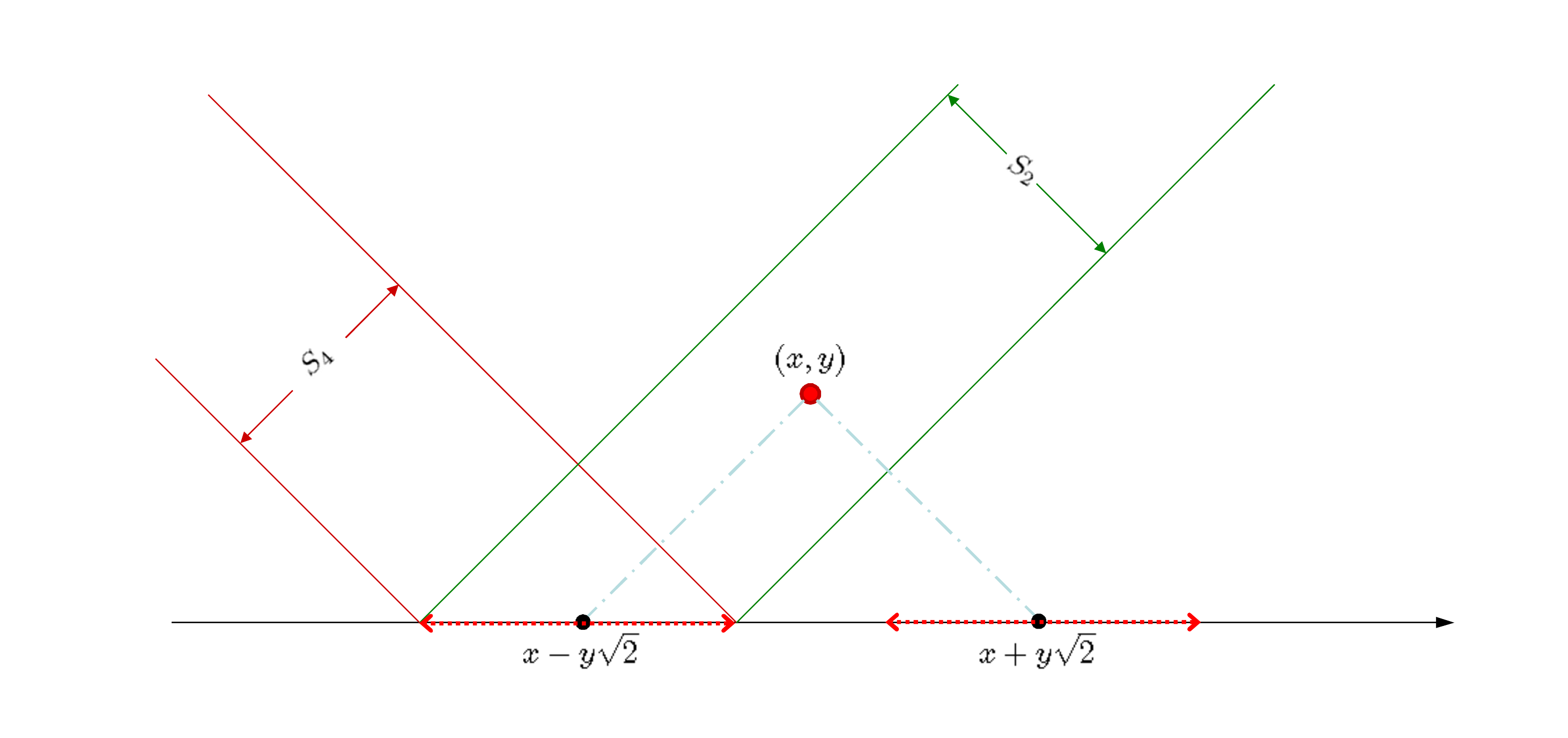}

A general remark about the rational values of $\e_0$: the two endpoints $x_0 \pm \e_0$ might also belong to $\bar{N_{\e_0} (x_0)}$, but no point from the lines in the upper plane lies on the lines originating at $x_0 \pm \e_0$ are in $\bar{N_{\e_0} (x_0)}$,  
  because those lines are with slope $\pm \frac{\2}{2}$, hence the $y$-coordinate of such points must be irrational, hence not in $X$. 

Hence (abusing the notation for $S_2,\ S_4$ as in the previous case, without using indices to show that $S_i$ depend on $(x_0,y_0)$ and $\e_0$), we can write that

\begin{displaymath}
\bar{N_{\e_0} (x_0)} = S_0\cup S_2\cup S_4,
\end{displaymath}
 when $x_0+\e_0$ is irrational, and 

\begin{displaymath}
\bar{N_{\e_0} (x_0)} = S_2 \cup S_4 \cup [x_0-\e_0, x_0+\e_0]\cap\Q
\end{displaymath}
for rational $\e_0$. 

The end points are irrelevant in some sense (for the purpose of proving that $X$ is not regular). 

We shall show that $X$ is not only not regular but in fact it is not a Urysohn space.

It suffices to prove that the closures of any two basic neighborhoods have a nonempty intersection. 

This is ``obvious" from the picture (of the closures):

\includegraphics[width=13cm, trim=0cm 7cm 0cm 0cm, clip=true]{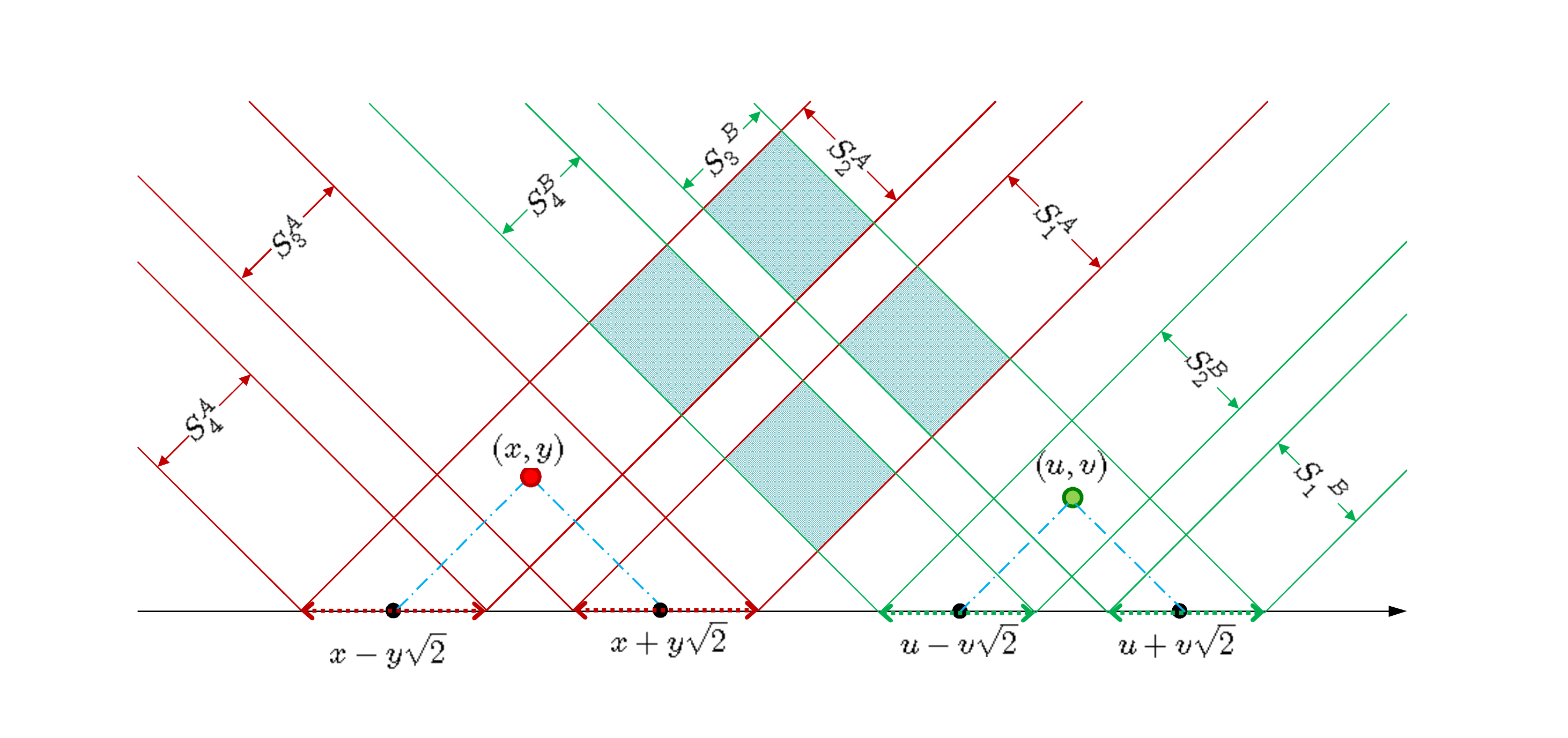}

(Note that ${S_i}^B, S_i ^A$ denote the respective "strips" for $A$ and $B$)

If, without loss of generality, $u>x$, then $({S_3}^B\cup S_4 ^B) \cap (S_1 ^A \cup S_2 ^A)\neq \emptyset$.
If two points have open neighborhoods with disjoint closures then these neighborhoods can be considered basic, hence if we prove that basic neighborhoods of any two points have non-disjoint closures, $X$ will not be Urysohn, and hence not regular.

For some $\e>0$, the points on the border lines of the strips $S_1,\ldots,S_4$ might also be in $\bar{N_{\e} (x,y)}$, 
but for the purpose of proving that $X$ is not Urysohn (regular), this is irrelevant. 

What is important is that the closures of basic neighborhoods of any two points in $X$ intersect.
\begin{center}
\includegraphics[width=10cm, trim=0cm 5cm 0cm 5cm, clip=true]{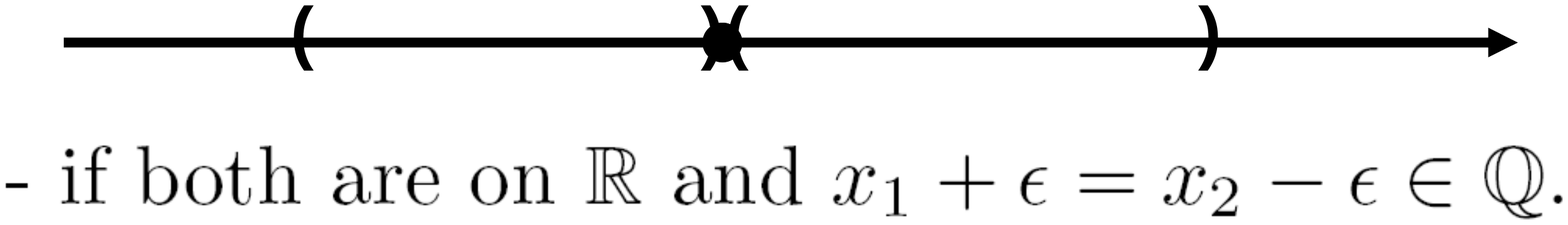}
\end{center}

In all other cases, if one of the points has $y$-coordinate $>0$, then the interior of the corresponding strips $S_1,\ldots,S_4$ will contain points from $X$, hence the interiors will intersect ($\Q \times \Q$ is dense in $\R^2$). 
\end{proof}

Another fundamental example of a \Lin space is the Sorgenfrey line.

\begin{xmpl}[Sorgenfrey line]
The Sorgenfrey line $\Sorg$ is $\R$ with the topology generated by the base  $\B=\{ [ a,b) : a<b\}$. 
\end{xmpl}

\begin{xmpl} \label{Sorg}$\Sorg$ is Lindel\"of and normal.
\end{xmpl}

\begin{proof} 
We will provide the proof  given in \cite{SxS}.

Let $\U$ be an open cover of $\Sorg$. 
We define $\V = \{\halo{U}^{\R}: U \in \U\}$, where $\halo{U}^{\R}$ is the interior of the set $U$ with respect to the Euclidean topology 
. 
By Example \ref{Lsmpl}(\ref{RLind}), there is a countable subfamily $\U_1 \subset \U$ with $\bigcup \{\halo{U}: U \in \U_1\}=\bigcup\V$. 

Define $Z = \R \setminus \bigcup \V$. 
We shall  show that $Z$ is countable. 
For every $x\in Z$ there is $U_x\in\U$ such that $x\in U_x$, and there is $b_x\in\R$ such that $x<b_x$ and $[x,b_x)\subseteq U_x$. 
For every $x \in Z$ let $\al_x \in \Q$ be such that $x<a_x<b_x$. 
Define a mapping $f: Z \rightarrow \Q$ by $f(x) = a_x$, and let us show that it is a one-to-one function from $Z$ into $\Q$. 
Suppose $x\neq y \in Z$.
Without loss of generality, we may assume that $x<y$. 
If $x<y<b_x$ then $y \in (x,b_x) \subseteq \halo{U}^{\R}$, for some $U\in \U$ with $[x,b_x)\subset U$. 
But this means that $y \in \halo{U}^\R \in \V$ - a contradiction to $y \in Z = \R \setminus \bigcup \V$. 
Therefore $\al_x <b_x\leq y<\al_y$. 
Hence, $Z$ is countable, and so is $\U_0 = \{U_x : x\in Z\}$. 

Then $\U_0 \cup \U_1$ is a countable subcover of $\U$.

We  shall show that $\Sorg$ is regular, and hence by Theorem \ref{regular+L=normal}, it is normal. 
Let us point out that any basic open set $[a,b)$ is also closed,
 because $\displaystyle{X\setminus[a,b)=\bigcup_{n\in\N}[a-2n,a-n)\cup\bigcup_{n\in\N}[b+n,b+n+1)}$. 
Hence, whenever $U$ is open and $x\in U$, there exists an open set $V$ ($=[a,b)$) such that $x\in V\subset\bar{V}\subset U$.
Hence, $\Sorg$ is regular.
\end{proof}

However, the product of even two Lindel\"of spaces need not be Lindel\"of.
In \cite{Sorg}
Sorgenfrey proved that $\Sorg\times\Sorg$ is not Lindel\"of by proving that it is not normal. 
We shall give a different proof by using the following proposition:

\begin{propn}
\label{closed discrete subsets of HL}
If $X$ is  Lindel\"of and $F \subset X$ is closed and discrete then $F$ is countable.
\end{propn}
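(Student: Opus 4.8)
The plan is to turn the two hypotheses---$F$ closed and $F$ discrete---into a single open cover of all of $X$ whose combinatorics force $F$ to be countable. First I would use discreteness to select, for each point $x\in F$, an open set $U_x$ with $U_x\intersect F=\{x\}$; this is exactly the content of $F$ being discrete (each point of $F$ is isolated in $F$). Next, because $F$ is closed, the complement $X\setminus F$ is open. Together these yield the open cover
\[
\U=\{U_x:x\in F\}\union\{X\setminus F\}
\]
of $X$: every point of $F$ lies in its own $U_x$, and every point outside $F$ lies in $X\setminus F$.

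The second step is to apply the Lindel\"of property to $\U$, obtaining a countable subcover $\U'$. The key observation is that, for a fixed $x\in F$, the set $U_x$ is the \emph{only} member of $\U$ containing $x$: indeed $x\notin X\setminus F$, and if $x\in U_y$ for some $y\in F$ then $x\in U_y\intersect F=\{y\}$, forcing $y=x$. Consequently, in order for $\U'$ to cover the point $x$ it must actually contain $U_x$; since this holds for every $x\in F$, the whole family $\{U_x:x\in F\}$ sits inside the countable set $\U'$ and is therefore countable. Finally, the assignment $x\mapsto U_x$ is injective (distinct points yield open sets with distinct traces on $F$), so $F$ itself is countable.

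The argument is short, and the one thing to get right is the \emph{indispensability} observation in the second paragraph: it is precisely the fact that each $U_x$ cannot be dropped from any subcover that converts a countable subcover into a countable index set, and this relies on having chosen the $U_x$ so that $U_x\intersect F=\{x\}$ rather than merely $x\in U_x$. No genuine obstacle arises beyond recording the standard fact that discreteness of $F$ separates each of its points from the rest by an open set; once that is in hand, closedness supplies the single extra patch $X\setminus F$ needed to cover the complement, and the Lindel\"of hypothesis does the rest.
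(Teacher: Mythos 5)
Your proof is correct. The paper states Proposition \ref{closed discrete subsets of HL} without giving any proof, so there is nothing to compare against; your argument is the standard one that fills this gap: isolate each $x\in F$ by an open $U_x$ with $U_x\cap F=\{x\}$, add the open set $X\setminus F$ to obtain a cover, and note that each $U_x$ is indispensable in any subcover, so a countable subcover forces $\{U_x:x\in F\}$ --- and hence $F$, by injectivity of $x\mapsto U_x$ --- to be countable.
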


\begin{xmpl}\label{SorgplanenotL} $\Sorg \times \Sorg$ is not Lindel\"of. 
\end{xmpl} 

\begin{proof}
Let  us consider $D = \{(x, -x) : x \in \Sorg\}\subset\Sorg \times \Sorg$.
It is uncountable, as it is equicardinal to $\R$. 
We shall show that $D$ is closed and discrete in $\Sorg \times \Sorg$, and hence, by Proposition \ref{closed discrete subsets of HL},  $\Sorg \times \Sorg$ is not Lindel\"of. 

First we show that $D$ is closed by showing that $(\Sorg \times \Sorg)\setminus D$ is open in $\Sorg \times \Sorg$. 
Let $D^+ = \{(x,y) : x+y > 0\}$ and $D^- = \{(x,y) : x+y < 0\}$.
Then $(\Sorg \times \Sorg) \setminus D = D^+ \cup D^-$.

Indeed, let $(x,y) \in D^+$.
Then every basic neighborhood of $(x,y)$ does not intersect $D$ (because it is of the form $[ x, x+ \epsilon) \times [y, y+ \epsilon)$.
If $(x,y) \in D^-$ then the neighborhood $[x,\frac{x-y}{2}) \times [y, \frac{y-x}{2})$ does not intersect $D$. 
The sets $D^+$ and $D^-$ are both open in $\Sorg\times\Sorg$, hence, $D$ is closed.
\begin{center}
\includegraphics[width=10cm]{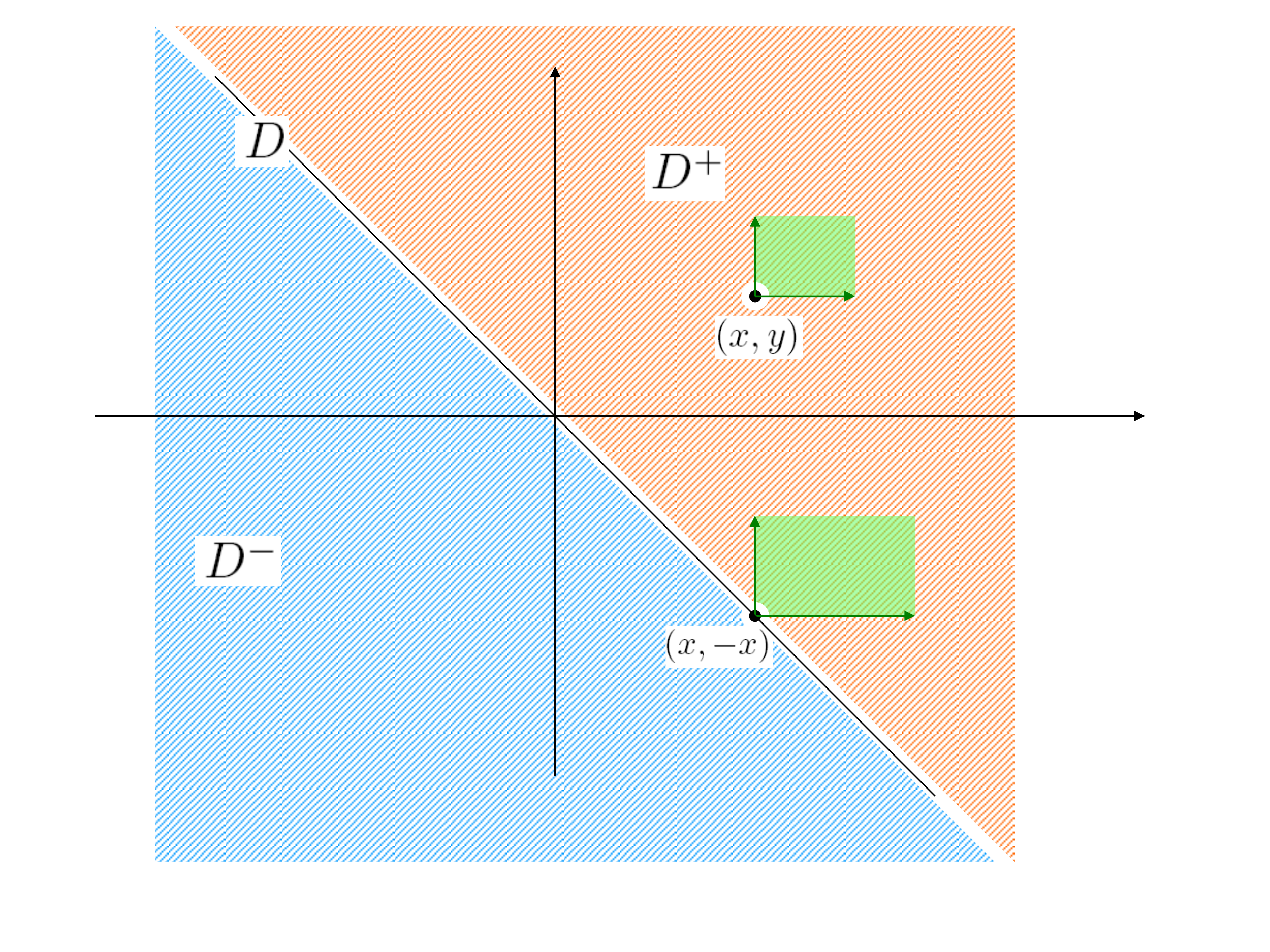}
\end{center}

$D$ is discrete because for every $(x,-x) \in D$ we have that each basic neighborhood of $(x,-x)$ in $\Sorg \times \Sorg$ (which is of the form $[x,x+\epsilon) \times [-x, -x+\epsilon)$, for $\epsilon > 0$) intersects $D$ in just one point, $(x,-x)$. 
 This completes the proof. 
\end{proof}

\subsection{Lindel\"ofness and Other Properties}

Here we consider the interrelations between \Lind ness and other topological properties, such as separability, second countability, and CCC.

\begin{thm} \cite{SDb} \label{metric} Let $X$ be a metric space. Then the following are equivalent:

\begin{enumerate}
\item
$X$ is Lindel\"of,
\item
 $X$ is hereditarily Lindel\"of,
\item
$X$ is second countable,
\item
$X$ is separable,
\item
$X$ satisfies the countable chain condition.
\end{enumerate}
\end{thm}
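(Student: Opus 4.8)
The plan is to prove all five equivalences through a single cycle of implications, $(3)\Rightarrow(2)\Rightarrow(1)\Rightarrow(4)\Rightarrow(5)\Rightarrow(3)$, keeping track of which steps are purely topological and which genuinely use the metric. Only two of the five steps will actually invoke the metric structure; the remainder hold in arbitrary topological spaces.

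First I would handle $(3)\Rightarrow(2)\Rightarrow(1)$. For $(3)\Rightarrow(2)$, observe that if $X$ has a countable base $\B$, then every subspace $Y\subseteq X$ inherits the countable base $\{B\cap Y : B\in\B\}$, so every subspace is again second countable; and a second countable space is \Lind (given an open cover, for each basis element contained in some cover member choose one such member, obtaining a countable subcover). Hence every subspace is \Lind, i.e. $X$ is hereditarily \Lind. Alternatively one may invoke Proposition \ref{herLlem}, since every open subspace is second countable and therefore \Lind. The step $(2)\Rightarrow(1)$ is immediate, as $X$ is a subspace of itself.

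Next, $(1)\Rightarrow(4)$ uses the metric for the first time. For each $n\in\N$ the family $\{B(x,1/n):x\in X\}$ is an open cover of $X$, so by \Lind ness it has a countable subcover; let $D_n$ be the countable set of centres of that subcover and put $D=\bigcup_{n\in\N}D_n$, a countable set. Given any $y\in X$ and $\e>0$, choosing $n$ with $1/n<\e$ and a ball of the subcover containing $y$ produces a point of $D_n$ within $1/n<\e$ of $y$; thus $D$ is dense and $X$ is separable. The step $(4)\Rightarrow(5)$ is again purely topological: if $D$ is countable dense and $\{U_\al\}_{\al\in A}$ is a family of pairwise disjoint nonempty open sets, each $U_\al$ meets $D$, and picking a point of $D$ in each $U_\al$ defines an injection $A\to D$ by disjointness, so $A$ is countable and $X$ is CCC.

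The step I expect to be the main obstacle is $(5)\Rightarrow(3)$, which is where CCC and the metric must be combined. For each $n\in\N$, I would use Zorn's Lemma to choose a maximal subfamily $\mathcal{G}_n$ of $\{B(x,1/n):x\in X\}$ consisting of pairwise disjoint balls; by CCC this family is countable, so its set of centres $D_n$ is countable, and $D=\bigcup_n D_n$ is countable. The key point is that maximality forces density: for any $y\in X$ the ball $B(y,1/n)$ must meet some member of $\mathcal{G}_n$, for otherwise it could be adjoined to $\mathcal{G}_n$, contradicting maximality; hence some centre in $D_n$ lies within $2/n$ of $y$, and letting $n$ grow shows $D$ is dense. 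Finally the countable family $\{B(d,1/m):d\in D,\ m\in\N\}$ is a base, since for open $U\ni x$ one chooses $m$ with $2/m$ below the radius of a ball about $x$ inside $U$ and then $d\in D$ within $1/m$ of $x$, giving $x\in B(d,1/m)\subseteq U$. Thus $X$ is second countable. The delicate parts here are the appeal to Zorn's Lemma for the maximal disjoint family and the maximality-to-density argument; everything else is routine.
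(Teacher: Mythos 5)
Your proof is correct. Note that the paper does not actually prove Theorem \ref{metric}: it is stated with a citation to \cite{SDb} and no argument is given, so there is no internal proof to compare yours against. Your single-cycle decomposition $(3)\Rightarrow(2)\Rightarrow(1)\Rightarrow(4)\Rightarrow(5)\Rightarrow(3)$ is the economical standard route, and you correctly isolate the only two implications that genuinely use the metric: $(1)\Rightarrow(4)$ via countable subcovers of the $1/n$-ball covers, and $(5)\Rightarrow(3)$ via maximal pairwise disjoint families of $1/n$-balls. The details there are sound: Zorn's Lemma applies because the union of a chain of pairwise disjoint families of balls is again pairwise disjoint; maximality forces every $B(y,1/n)$ to meet some member of $\mathcal{G}_n$ (either it already belongs to $\mathcal{G}_n$ or it could be adjoined), giving a centre within $2/n$ of $y$; and the triangle-inequality check that $\{B(d,1/m): d\in D,\ m\in\N\}$ is a base is correct, since $d(d,x)<1/m$ and $2/m<r$ give $B(d,1/m)\subseteq B(x,r)\subseteq U$. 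Two stylistic remarks: in $(5)\Rightarrow(3)$ you may replace the maximal disjoint family of balls by a maximal $1/n$-separated set of points (whose $1/(2n)$-balls are disjoint), which avoids the minor fuss of choosing one centre per ball; and your $(5)\Rightarrow(3)$ step actually factors as $(5)\Rightarrow(4)\Rightarrow(3)$, which would let you quote the separable-to-second-countable argument as a self-contained lemma. Neither remark is a correction.
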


The following example shows that in general, separability does not follow from being CCC and \Lind.

\begin{xmpl} \cite{Win} There is a Hausdorff, Lindel\"of, first countable space $X$ which is CCC but not separable.
\end{xmpl} 

For more clarity, we will slightly modify the construction of Winkler, and we will provide visualizations of the main steps of the proof.

\begin{construction}

We will first construct a family of ``copies" of $\Q$ of length $\w$, consisting of pairwise disjoint (or else coinciding) sets. 

Let $\Q_a = \Q + a=\{q+a: q\in\Q\}$, for every $a \in \I$. 
 Since the translation map $f : \R \rightarrow \R$, given by $f(x) = x+a$ is a homeomorphism, for all $a \in \R$, $\Q_a$ are dense in $\R$. 

In $\{\Q_a:a\in\I\}$ there is an uncountable disjoint subfamily $D$.

Then $D$ consists of countable dense subsets of $\R$.

We can take a subfamily $\cZ \subset D$ enumerated by $\w_1$, say
$\cZ = \{\Q_{a_\al} : \Q_{a_\al}\in D, \al < \w_1\}$. 
From now on, we abuse notation and write $\Q_\al$ for $\Q_{a_\al}$, where $\al \in \w_1$ and $a \in \I$. 

Define $X = \{(x,\al) : x \in \Q_\al, \al<\w_1\}\subseteq \R\times\w_1$. 
\\
\\
\begin{center}
\includegraphics[width=10cm]{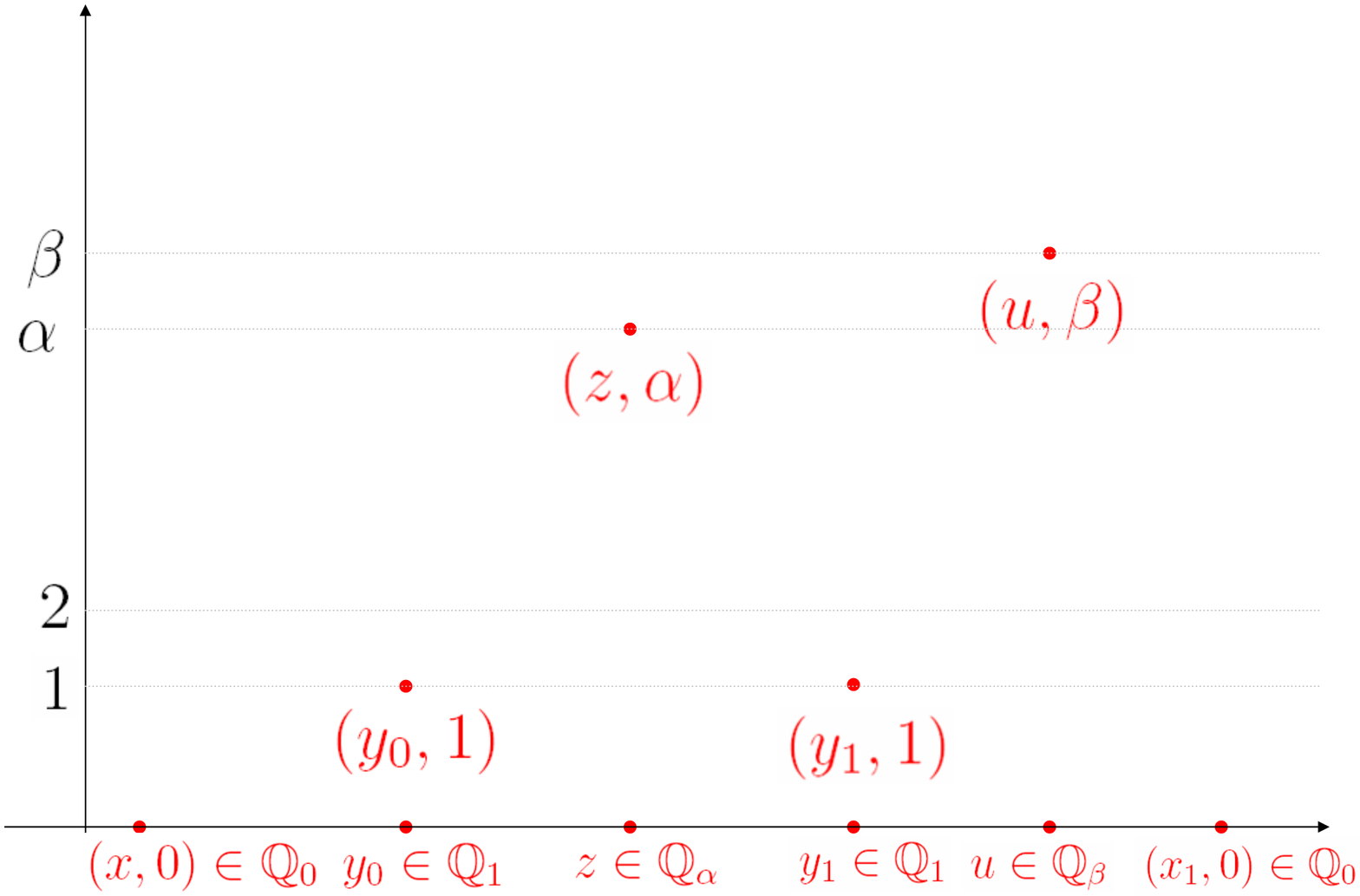}
\\
\end{center}

Let $(x, \al) \in X$ and $\epsilon > 0$. 
Define $O_\epsilon (x,\al) = \{(y, \be) \in X : |x-y| < \epsilon , \al \leq \be < \w_1\}$. 

Let us illustrate with some examples. 

$O_\e (x,0) = \{(y, \be) \in X : y \in \Q_\be, \ |x-y| < \epsilon , 0 \leq \be < \w_1\}$. Note that $x \in \Q_0$.

$O_\e (x,1) = \{(y, \be) \in X : y \in \Q_\be, \ |x-y| < \epsilon , 1 \leq \be < \w_1\}$. 
\\

%
%
\setlength{\unitlength}{3000sp}%
\begingroup\makeatletter\ifx\SetFigFont\undefined%
\gdef\SetFigFont#1#2#3#4#5{%
  \reset@font\fontsize{#1}{#2pt}%
  \fontfamily{#3}\fontseries{#4}\fontshape{#5}%
  \selectfont}%
\fi\endgroup%
\begin{picture}(7816,3894)(222,-3268)
{\color[rgb]{1,0,0}\thinlines
\put(3774,-2949){\circle{76}}
}%
{\color[rgb]{1,0,0}\put(2123,-2950){\circle{76}}
}%
{\color[rgb]{1,0,0}\put(5432,-2942){\circle{76}}
}%
{\color[rgb]{0,0,0}\put(1171,-3174){\vector( 0, 1){3788}}
}%
{\color[rgb]{0,0,0}\put(234,-2941){\vector( 1, 0){7792}}
}%
{\color[rgb]{1,.84,0}\multiput(2124,603)(0.00000,-120.43137){26}{\line( 0,-1){ 60.216}}
\multiput(2124,-2468)(120.00000,0.00000){28}{\line( 1, 0){ 60.000}}
\multiput(5424,-2468)(0.00000,120.43137){26}{\line( 0, 1){ 60.216}}
}%
{\color[rgb]{1,1,1}\put(2137,-2446){\framebox(3264,3058){}}
}%
\put(1910,-3200){\makebox(0,0)[lb]{\smash{{\SetFigFont{12}{14.4}{\rmdefault}{\mddefault}{\updefault}{\color[rgb]{1,0,0}$x-\e$}%
}}}}
\put(5283,-3200){\makebox(0,0)[lb]{\smash{{\SetFigFont{12}{14.4}{\rmdefault}{\mddefault}{\updefault}{\color[rgb]{1,0,0}$x+\e$}%
}}}}
\put(3500,-3200){\makebox(0,0)[lb]{\smash{{\SetFigFont{12}{14.4}{\rmdefault}{\mddefault}{\updefault}{\color[rgb]{1,0,0}$x\in\Q_1$}%
}}}}
{\color[rgb]{1,0,0}\thinlines
\put(3790,-2470){\circle{76}}
}%
{\color[rgb]{1,0,0}\put(4705,-2942){\circle{76}}
}%
{\color[rgb]{1,0,0}\put(4705,-2470){\circle{76}}
}%
{\color[rgb]{0,0,0}\put(1171,-3174){\vector( 0, 1){3788}}
}%
{\color[rgb]{0,0,0}\put(234,-2941){\vector( 1, 0){7792}}
}%
{\color[rgb]{1,.84,0}\multiput(2124,603)(0.00000,-120.43137){26}{\line( 0,-1){ 60.216}}
\multiput(2124,-2468)(120.00000,0.00000){28}{\line( 1, 0){ 60.000}}
\multiput(5424,-2468)(0.00000,120.43137){26}{\line( 0, 1){ 60.216}}
}%
\put(3555,-2689){\makebox(0,0)[lb]{\smash{{\SetFigFont{12}{14.4}{\rmdefault}{\mddefault}{\updefault}{\color[rgb]{1,0,0}$(x,1)$}%
}}}}
\put(4455,-2678){\makebox(0,0)[lb]{\smash{{\SetFigFont{12}{14.4}{\rmdefault}{\mddefault}{\updefault}{\color[rgb]{1,0,0}$(x_1,1)$}%
}}}}
\put(4330,-3200){\makebox(0,0)[lb]{\smash{{\SetFigFont{12}{14.4}{\rmdefault}{\mddefault}{\updefault}{\color[rgb]{1,0,0}$x_1\in\Q_1$}%
}}}}
{\color[rgb]{1,0,0}\thinlines
\put(2845,-2934){\circle{76}}
}%
{\color[rgb]{1,0,0}\put(2845,-340){\circle{76}}
}%
{\color[rgb]{0,0,0}\put(1171,-3174){\vector( 0, 1){3788}}
}%
{\color[rgb]{0,0,0}\put(234,-2941){\vector( 1, 0){7792}}
}%
{\color[rgb]{1,.84,0}\multiput(2124,603)(0.00000,-120.43137){26}{\line( 0,-1){ 60.216}}
\multiput(2124,-2468)(120.00000,0.00000){28}{\line( 1, 0){ 60.000}}
\multiput(5424,-2468)(0.00000,120.43137){26}{\line( 0, 1){ 60.216}}
}%
{\color[rgb]{0,.69,.69}\multiput(1179,-339)(119.18261,0.00000){58}{\line( 1, 0){ 59.591}}
}%
\put(983,-401){\makebox(0,0)[lb]{\smash{{\SetFigFont{12}{14.4}{\rmdefault}{\mddefault}{\updefault}{\color[rgb]{0,0,0}$\al$}%
}}}}
\put(2650,-3200){\makebox(0,0)[lb]{\smash{{\SetFigFont{12}{14.4}{\rmdefault}{\mddefault}{\updefault}{\color[rgb]{1,0,0}$y\in\Q_\al$}%
}}}}
\put(2588,-551){\makebox(0,0)[lb]{\smash{{\SetFigFont{12}{14.4}{\rmdefault}{\mddefault}{\updefault}{\color[rgb]{1,0,0}$(y,\al)$}%
}}}}
\put(961,-2545){\makebox(0,0)[lb]{\smash{{\SetFigFont{12}{14.4}{\rmdefault}{\mddefault}{\updefault}{\color[rgb]{0,0,0}$1$}%
}}}}
\end{picture}%

\\

We now show that $\B = \{O_\epsilon (x,\al) : (x,\al) \in X, \epsilon > 0\}$ is a base for a topology $\tau$ on $X$. 

We obviously have that $\cup \B = X$. 

Let $(x,\al) \in O_{\e_1} (x_1,\al_1) \cap O_{\e_2} (x_2,\al_2)$.
We have to find $\e>0$ such that $O_\e(x,\al) \subset O_{\e_1} (x_1,\al_1) \cap O_{\e_2} (x_2,\al_2)$.
Since $(x,\al) \in  O_{\e_i} (x_i,\al_i)$ it follows that $\al \geq \al_i$, $i = 1,2$. 
Without loss of generality suppose $x_1<x_2$ and $x_1-\e_1 < x_2-\e_2$ and $x_1+\e_1 < x_2-\e_2$. 
We have the following picture:
\\
\\

%
%
\setlength{\unitlength}{3000sp}%
\begingroup\makeatletter\ifx\SetFigFont\undefined%
\gdef\SetFigFont#1#2#3#4#5{%
  \reset@font\fontsize{#1}{#2pt}%
  \fontfamily{#3}\fontseries{#4}\fontshape{#5}%
  \selectfont}%
\fi\endgroup%
\begin{picture}(7816,3821)(222,-3195)
{\color[rgb]{1,.84,0}\thinlines
\put(3190,-2941){\circle{68}}
}%
{\color[rgb]{1,.84,0}\put(3197,-2230){\circle{68}}
}%
{\color[rgb]{1,.84,0}\put(1645,-2942){\circle{68}}
}%
{\color[rgb]{1,.84,0}\put(4727,-2941){\circle{68}}
}%
{\color[rgb]{0,0,0}\put(1171,-3174){\vector( 0, 1){3788}}
}%
{\color[rgb]{0,0,0}\put(234,-2941){\vector( 1, 0){7792}}
}%
{\color[rgb]{1,.84,0}\multiput(1655,603)(0.00000,-120.63830){24}{\line( 0,-1){ 60.319}}
\multiput(1655,-2232)(120.39216,0.00000){26}{\line( 1, 0){ 60.196}}
\multiput(4725,-2232)(0.00000,120.63830){24}{\line( 0, 1){ 60.319}}
}%
\put(2700,-3150){\makebox(0,0)[lb]{\smash{{\SetFigFont{12}{14.4}{\rmdefault}{\mddefault}{\updefault}{\color[rgb]{1,.84,0}$x_1\in\Q_{\al_1}$}%
}}}}
\put(1438,-3150){\makebox(0,0)[lb]{\smash{{\SetFigFont{12}{14.4}{\rmdefault}{\mddefault}{\updefault}{\color[rgb]{1,.84,0}$x_1-\e_1$}%
}}}}
\put(4492,-3150){\makebox(0,0)[lb]{\smash{{\SetFigFont{12}{14.4}{\rmdefault}{\mddefault}{\updefault}{\color[rgb]{1,.84,0}$x_1+\e_1$}%
}}}}
\put(2896,-2437){\makebox(0,0)[lb]{\smash{{\SetFigFont{12}{14.4}{\rmdefault}{\mddefault}{\updefault}{\color[rgb]{1,.84,0}$(x_1,\al_1)$}%
}}}}
\put(900,-2293){\makebox(0,0)[lb]{\smash{{\SetFigFont{12}{14.4}{\rmdefault}{\mddefault}{\updefault}{\color[rgb]{0,0,0}$\al_1$}%
}}}}
{\color[rgb]{0,0,1}\thinlines
\put(5680,-2934){\circle{68}}
}%
{\color[rgb]{0,0,1}\put(3782,-2935){\circle{68}}
}%
{\color[rgb]{0,0,1}\put(7562,-2935){\circle{68}}
}%
{\color[rgb]{0,0,1}\put(5672,-2702){\circle{68}}
}%
{\color[rgb]{0,0,0}\put(1171,-3174){\vector( 0, 1){3788}}
}%
{\color[rgb]{0,0,0}\put(234,-2941){\vector( 1, 0){7792}}
}%
{\color[rgb]{0,0,1}\multiput(3799,603)(0.00000,-120.03636){28}{\line( 0,-1){ 60.018}}
\multiput(3799,-2698)(119.39683,0.00000){32}{\line( 1, 0){ 59.698}}
\multiput(7560,-2698)(0.00000,120.03636){28}{\line( 0, 1){ 60.018}}
}%
\put(3700,-3150){\makebox(0,0)[lb]{\smash{{\SetFigFont{12}{14.4}{\rmdefault}{\mddefault}{\updefault}{\color[rgb]{0,0,1}$x_2-\e_2$}%
}}}}
\put(7359,-3126){\makebox(0,0)[lb]{\smash{{\SetFigFont{12}{14.4}{\rmdefault}{\mddefault}{\updefault}{\color[rgb]{0,0,1}$x_2+\e_2$}%
}}}}
\put(5491,-3131){\makebox(0,0)[lb]{\smash{{\SetFigFont{12}{14.4}{\rmdefault}{\mddefault}{\updefault}{\color[rgb]{0,0,1}$x_2\in\Q_{\al_2}$}%
}}}}
\put(5376,-2615){\makebox(0,0)[lb]{\smash{{\SetFigFont{12}{14.4}{\rmdefault}{\mddefault}{\updefault}{\color[rgb]{0,0,1}$(x_2,\al_2)$}%
}}}}
\put(900,-2757){\makebox(0,0)[lb]{\smash{{\SetFigFont{12}{14.4}{\rmdefault}{\mddefault}{\updefault}{\color[rgb]{0,0,0}$\al_2$}%
}}}}
{\color[rgb]{0,.69,0}\thinlines
\put(4017,-2941){\circle{68}}
}%
{\color[rgb]{0,.69,0}\put(4253,-1531){\circle{68}}
}%
{\color[rgb]{0,.69,0}\put(4489,-2941){\circle{68}}
}%
{\color[rgb]{0,.69,0}\put(4253,-2941){\circle{68}}
}%
{\color[rgb]{0,0,0}\put(1171,-3174){\vector( 0, 1){3788}}
}%
{\color[rgb]{0,0,0}\put(234,-2941){\vector( 1, 0){7792}}
}%
{\color[rgb]{0,.69,0}\multiput(4017,603)(0.00000,-121.48571){18}{\line( 0,-1){ 60.743}}
\multiput(4017,-1523)(134.85714,0.00000){4}{\line( 1, 0){ 67.429}}
\multiput(4489,-1523)(0.00000,121.48571){18}{\line( 0, 1){ 60.743}}
}%
\put(900,-1574){\makebox(0,0)[lb]{\smash{{\SetFigFont{12}{14.4}{\rmdefault}{\mddefault}{\updefault}{\color[rgb]{0,0,0}$\al$}%
}}}}
\put(4010,-3600){\makebox(0,0)[lb]{\smash{{\SetFigFont{12}{14.4}{\rmdefault}{\mddefault}{\updefault}{\color[rgb]{0,.69,0}$x\in\Q_\al$}%
}}}}
\put(3650,-3365){\makebox(0,0)[lb]{\smash{{\SetFigFont{12}{14.4}{\rmdefault}{\mddefault}{\updefault}{\color[rgb]{0,.69,0}$x-\e$}%
}}}}
\put(4350,-3365){\makebox(0,0)[lb]{\smash{{\SetFigFont{12}{14.4}{\rmdefault}{\mddefault}{\updefault}{\color[rgb]{0,.69,0}$x+\e$}%
}}}}
\end{picture}%

\\
\\

Then we have that  $ x_2-\e_2<x<x_1+\e_1 $.
Let $\e>0$ be such that $x_2-\e_2<x-\e<x<x+\e<x_1+\e_1 $.
Then $O_\e(x,\al) \subset  O_{\e_1} (x_1,\al_1) \cap O_{\e_2} (x_2,\al_2)$.
Indeed, let $(y,\be) \in O_\e (x,\al)$.
Then $\be \geq \al \geq \al_i$, $i=1,2$.
Also $x-\e < y < x+\e$, hence $x_2-\e_2<y<x_1+\e_1$, i.e. $|x_1-y|< \e_1$ and $|x_2-y|< \e_2$. 
Hence, $(y,\be) \in  O_{\e_1} (x_1,\al_1) \cap O_{\e_2} (x_2,\al_2)$. 
Therefore, $\B$ is a base for a topology on $X$. 
\end{construction}

\begin{cl}X is Hausdorff.
\end{cl}
\begin{proof}

Note that $x_1=x_2$ implies $\al_1=\al_2$.
So, if we want $(x_1,\al_1) \neq (x_2,\al_2)$, then we just need $x_1\neq x_2$.

Let $(x_1,\al_1) \neq (x_2,\al_2)$ be two points in   $X$,
i.e. $x_1 \in \Q_{\al_1}$, $x_2 \in \Q_{\al_2}$.
Since $\Q_{\al_1} \cap \Q_{\al_2} = \emptyset$ we have that $x_1 \neq x_2$ and without loss of generality $x_2 < x_2$, $\al_1 < \al_2$

Let $\epsilon = \frac{|x_1 - x_2|}{2}$. 
Then $O_\epsilon (x_1, \al_1) \cap O_\epsilon (x_2, \al_2) = \emptyset$, because their projections in $\R$ are disjoint.
\end{proof}

\begin{cl}X is first countable.
\end{cl}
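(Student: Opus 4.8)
The plan is to exhibit an explicit countable local base at each point and verify it satisfies the defining property of Proposition \ref{LB}. Given $(x,\al) \in X$, I would take the nested family $\{O_{\n}(x,\al) : n \in \N\}$ as the candidate. Each of these is a basic open set containing $(x,\al)$, since $|x-x| = 0 < \n$ and $\al \leq \al$, so condition (LB1) holds automatically, and the family is visibly countable.

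Next I would show this family is cofinal among the basic neighborhoods of $(x,\al)$. By Proposition \ref{LBB}, given any open $U \ni (x,\al)$ there is a base element $O_\de(x',\al') \in \B$ with $(x,\al) \in O_\de(x',\al') \subseteq U$; unpacking the definition of $O_\de(x',\al')$, this says $|x-x'| < \de$ and $\al' \leq \al$. I then claim $O_{\n}(x,\al) \subseteq O_\de(x',\al')$ for a suitable $n$. The ordinal coordinate causes no trouble: any $(y,\be) \in O_{\n}(x,\al)$ satisfies $\be \geq \al \geq \al'$, so the condition $\al' \leq \be$ holds for every $n$.

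The only genuine estimate is on the real coordinate, and it is a triangle-inequality argument. Setting $r = \de - |x-x'| > 0$ and invoking the Archimedean property to choose $n$ with $\n < r$, every $(y,\be) \in O_{\n}(x,\al)$ has $|x'-y| \leq |x'-x| + |x-y| < |x'-x| + r = \de$, so $(y,\be) \in O_\de(x',\al')$. Hence $O_{\n}(x,\al) \subseteq O_\de(x',\al') \subseteq U$, which is exactly condition (LB2). Since $(x,\al)$ was arbitrary, $X$ is first countable.

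I would flag that the one point where a reader might expect difficulty is that the base element trapping $(x,\al)$ inside $U$ need not be centered at $(x,\al)$ itself; this is precisely why the triangle inequality is required rather than the trivial monotonicity of nested intervals about a common center. Everything else here is routine.
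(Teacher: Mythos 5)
Your proof is correct and takes essentially the same route as the paper: both exhibit $\{O_{1/n}(x,\al) : n \in \N\}$ as the countable local base and pick $n$ by the Archimedean property, the only difference being that you justify explicitly (via the triangle inequality) why some neighborhood \emph{centered} at $(x,\al)$ sits inside the given open set, a re-centering step the paper asserts outright because it was effectively carried out when $\B$ was verified to be a base in the construction. One small mislabel: what you verify at the end is the defining property of a local base (Definition of local basis), not condition (LB2) of Proposition \ref{LB}.
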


\begin{proof}
$X$ is first countable because $\{O_\frac{1}{n} (x,\al) : n \in \N\}$ is a countable local base for every $(x,\al) \in X$. 

Indeed if $U\subset X$ is open, then there is an $\e>0$ such that $O_\e (x,\al) \subset U$.
Take $n \in N$ such that $\frac{1}{n} < \e$.
Then $O_\frac{1}{n} (x,\al) \subset O_\e (x,\al) \subset U$. 
\end{proof}

\emph{Change of notation}

Let $O(a,b,\al) = \{(x,\be) \in X : a<x<b , \al\leq\be<\w_1\}$.
Then $O(a,b,\al)$ is open in $\tau$ and $\B_1 = \{O(a,b,\al) : a,b \in \R, \al<\w_1\}$ is again a base for the same topology $\tau$ of $X$. 

The proof is exactly the same as the one showing that open intervals form a base for the Euclidean topology on $\R$

\begin{cl} X is Lindel\"of.
\end{cl}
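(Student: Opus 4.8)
The plan is to reduce to a basic open cover and then exploit two facts that pull against each other: $\R$ is hereditarily \Lin (being second countable, Theorem \ref{metric}), while $\w_1$ has uncountable cofinality. First I would take an arbitrary open cover $\U$ of $X$ and, using Proposition \ref{basic} together with the base $\B_1$, assume without loss of generality that every member of $\U$ has the form $O(a,b,\al)$. The decisive structural feature of these basic sets is that $O(a,b,\al)$ contains \emph{every} point of $X$ whose first coordinate lies in $(a,b)$ and whose level is at least $\al$; covering a point $(x,\al)$ therefore forces the use of a set $O(a,b,\g)$ with $x\in(a,b)$ and $\g\le\al$. Also note that the projection $\pi(x,\al)=x$ is injective (the $\Q_\al$ are pairwise disjoint), so each level is a countable copy of $\Q$.

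The core of the argument is a ``level-stabilisation'' step. For each $\al<\w_1$ set $G_\al=\bigcup\{(a,b):O(a,b,\g)\in\U,\ \g\le\al\}\subseteq\R$. Each $G_\al$ is open, the family is increasing in $\al$, and $A:=\bigcup_{\al<\w_1}G_\al$ contains $\pi(X)$ (since $\U$ covers $X$). As an open subspace of $\R$, $A$ is \Lin, so the cover $\{G_\al:\al<\w_1\}$ of $A$ has a countable subcover $\{G_{\al_n}:n\in\N\}$; putting $\al^*=\sup_n\al_n<\w_1$ (a countable supremum of countable ordinals) and using monotonicity gives $A=G_{\al^*}$. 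Thus a \emph{single} countable level bound $\al^*$ suffices: every $x\in\pi(X)$ lies in some interval $(a,b)$ coming from a set $O(a,b,\g)\in\U$ with $\g\le\al^*$.

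Now the intervals $\{(a,b):O(a,b,\g)\in\U,\ \g\le\al^*\}$ form an open cover of $A$ in $\R$, so by \Lind ness of $A$ I can extract a countable subfamily $\V\subseteq\U$ whose intervals still cover $A\supseteq\pi(X)$. Then $\V$ already covers every point $(x,\al)$ with $\al\ge\al^*$: such an $x$ lies in some interval of a set $O(a,b,\g)\in\V$ with $\g\le\al^*\le\al$, whence $(x,\al)\in O(a,b,\g)$. The only points left uncovered are those of level $<\al^*$; since $\al^*$ is countable and each level is a countable copy of $\Q_\al$, this leftover set is a countable union of countable sets, hence countable. I cover it by choosing one member of $\U$ through each of its points, and the union of $\V$ with these countably many sets is the desired countable subcover.

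The step I expect to be the main obstacle is the stabilisation: translating ``each point is covered by some low-level basic set'' into ``one countable ordinal $\al^*$ bounds the levels needed everywhere.'' This is exactly where the tension between the \Lind ness of $\R$ (forcing a countable subcover of $\{G_\al\}$) and the uncountable cofinality of $\w_1$ (making the supremum of those countably many levels stay below $\w_1$) must be combined; once that is in place, the remaining ``low part'' of $X$ is countable and the conclusion is routine.
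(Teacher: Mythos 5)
Your proof is correct and follows essentially the same route as the paper: project the cover's intervals to the reals, use \Lind ness of $A=\bigcup_{\al<\w_1}\Q_\al\subseteq\R$ (as a second countable metric subspace) to extract countably many intervals whose associated levels have a countable supremum $\al^*$, observe that the corresponding basic sets then cover every point of level at least $\al^*$, and finally cover the remaining set $\bigcup_{\al<\al^*}\Q_\al\times\{\al\}$, which is countable, one point at a time. The only difference is your intermediate ``stabilisation'' step with the increasing sets $G_\al$, which the paper skips entirely: extracting a countable subcover from the projections of \emph{all} the basic sets at once automatically produces a countable family whose level-supremum is a countable ordinal, so the step you anticipated as the main obstacle is harmless but redundant.
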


\begin{proof}
We will use the fact that X is Lindel\"of iff from any cover with basic open sets we can find a countable subcover. 
Let $\g = \{O(a_k,b_k,\al_k) : k \in K, \al_k < \w_1\}$ be an open cover of $X$ with basic open sets.
Let us note that $\al = 0$ must be among those $\{ \al_k : \al_k < \w_1\}$, otherwise points from $X \cap (\R \times \{0\})$ will not be covered. 
Let $A=\bigcup_{\al\in\w_1}\Q_\al$; note that $A$ is uncountable.
Then $A \subset \R$ and since $\R$ is metric with a countable base, so is $A$; hence, $A$ is Lindel\"of. 
Then $\U = \{(a_k,b_k) \cap A : k \in K\}$ is an open cover of $A$ in the Euclidean topology and we can choose a countable 
subcover which 
 we shall denote by $\Upsilon = \{(a_n, b_n) \cap A : n \in \N\} $.
Then $\{\al_n : n \in \N\}$ is a countable set of countable ordinals, hence $\al_0 = \sup \{\al_n: n \in \N\}$ is at most countable. 

Let us show that $\{O(a_n,b_n,\al_n) : n \in \N\}$ covers all but countably many elements of $X$. 

\begin{center}
\includegraphics[width=10cm]{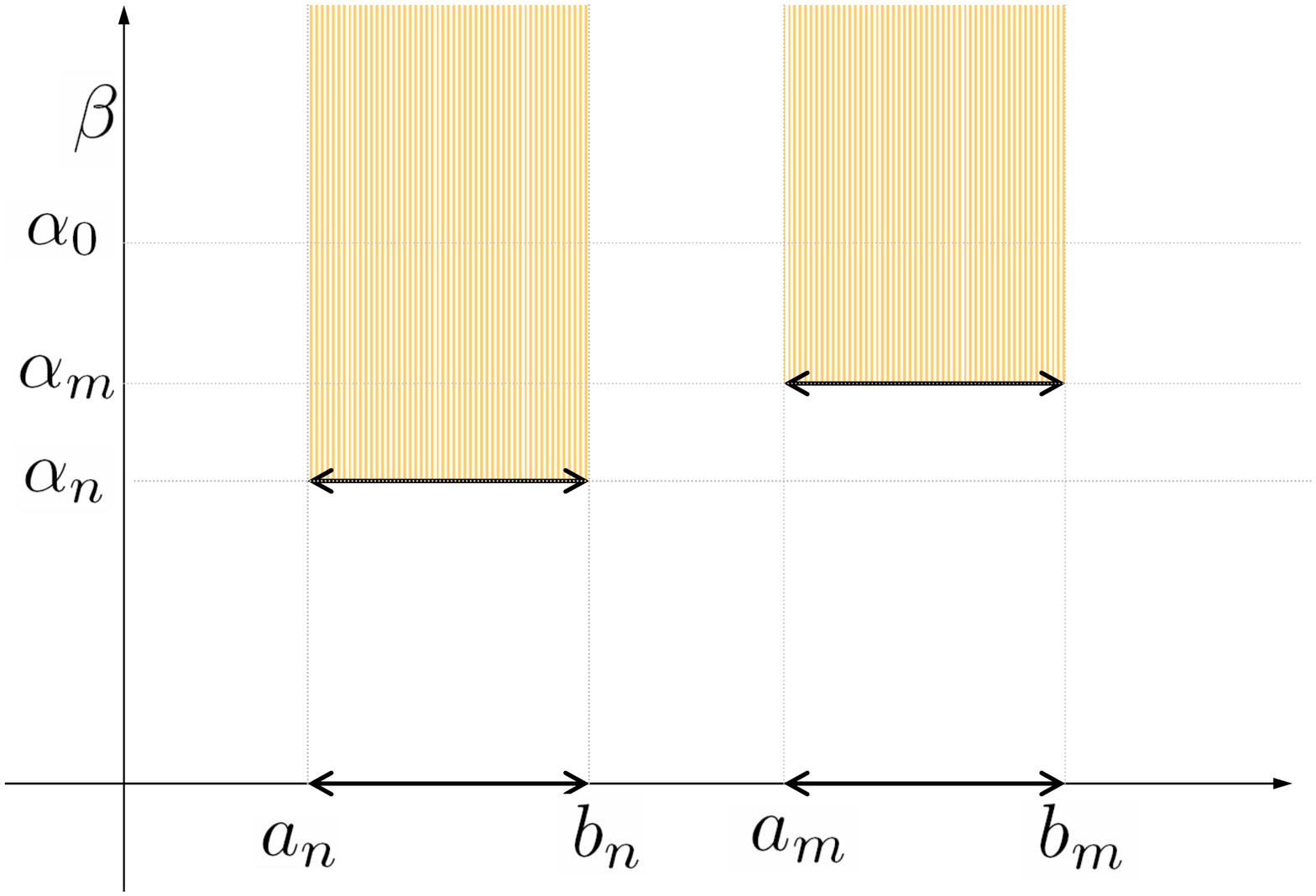}
\end{center}

Let us note that $\bigcup_{\al < \al_0} \Q_\al \times \{\al\}$ is countable since it is a countable union of countable sets.
Therefore we only need to show that $\bigcup_{{\al_0}\leq\al}\Q_\al \times \{\al\}$ can be covered with a countable number of elements of $\gamma$.
Let $(x,\be) \in X$. We have two cases:

\begin{case}[1]
$(x,\be) \in \Q_\al \times \{\al\}$ for some $\al < \al_0$. 
This means in particular that $\be=\al$. 
Then, since $\g$ covers $X$, we choose $O(a_\al, b_\al, \be_\al)$ in $\g$ such that $(x,\be) \in O(a_\al, b_\al, \be_\al)$.
\end{case}
\begin{case}[2]
Let $\be \geq \al_0$ and $(x,\be) \in X$, i.e. $x \in \Q_\be$.
Since $\g$ covers $X$, there is a $k \in K$ such that 
$(x,\be) \in O(a_k,b_k,\al_k) = \{(y,\de) : y \in \Q_\de,\ a_k < y<b_k , \al_k \leq \de < \w_1\}$.
Then $x \in \Q_\be \subset A$ and $\Upsilon$ covers $A$. 
Hence, there is an $n \in \N$ such that $x \in (a_n,b_n)$.
Then (by definition of the $O$'s), since $\be \geq \al_0 \rightarrow \be \geq \al_n$  for every $n\in\N$ and
$(x,\be) \in O (a_n,b_n,\al_n)= \{(y,\delta) : y \in \Q_\delta,  a_n < y<b_n , \al_n \leq \delta < \w_1\}$.
\end{case}
Hence, $X$ is Lindel\"of. 
\end{proof}

\begin{cl}X is CCC.
\end{cl}

\begin{proof}
We want to show that every family of pairwise disjoint nonempty open sets in $X$ is at most countable.
It is sufficient to show that any family of pairwise disjoint nonempty \emph{basic} open sets in $X$ is at most countable.
Let
\begin{displaymath}
\g = \{O(a_k,b_k,\al_k) : k \in K\}
\end{displaymath}
be any family of pairwise disjoint nonempty basic open sets in $X$. 
Suppose $K$ is uncountable.
Consider the family of open in $\R$ sets $\U = \{(a_k,b_k) : k \in K\}$. 
Since $\R$ is CCC, $\U$ cannot consist of pairwise disjoint sets.
So there are $k_1,k_2\in K$ such that $(a_{k_1}, b_{k_1}) \cap (a_{k_2}, b_{k_2})\neq\emptyset$. 
Let $\al = \max \{\al_{k_1},\al_{k_2}\}$.
Since $\Q_\al$ is dense in $\R$ we can find $y \in \Q_\al \cap (a,b)$.
Then $(y,\al) \in O(a_{k_1}, b_{k_1}, \al_{k_1} ) \cap O(a_{k_2}, b_{k_2}, \al_{k_2} )$ -- a contradiction. 
\end{proof}

\begin{cl}X is not separable.
\end{cl}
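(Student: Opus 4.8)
The plan is to show directly that no countable subset of $X$ can be dense. So I would take an arbitrary countable set $S = \{(x_n, \al_n) : n \in \N\} \subseteq X$ and produce a nonempty basic open set that misses $S$ entirely.

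The key observation concerns the second coordinates. The set $\{\al_n : n \in \N\}$ is a countable collection of countable ordinals, so its supremum $\al_0 = \sup\{\al_n : n \in \N\}$ is again a countable ordinal, and in particular $\al_0 < \w_1$. Since $\w_1$ is uncountable, there are levels strictly above $\al_0$; I would fix any $\g$ with $\al_0 < \g < \w_1$ and any point $(x,\g) \in X$, which exists because $\Q_\g$ is a nonempty translate of $\Q$.

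Now I would examine a basic neighbourhood $O(a,b,\g)$ of $(x,\g)$, i.e. one with $a < x < b$. By the very definition of these sets, every point $(y,\be) \in O(a,b,\g)$ satisfies $\be \geq \g > \al_0 \geq \al_n$ for all $n$. But each point of $S$ has second coordinate $\al_n \le \al_0 < \g$, so no point of $S$ can lie in $O(a,b,\g)$. Hence $(x,\g)$ has a neighbourhood disjoint from $S$, so $S$ is not dense; as $S$ was an arbitrary countable set, $X$ is not separable.

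The only genuine content here is the ordinal-arithmetic fact that a countable supremum of countable ordinals stays strictly below $\w_1$; everything else is simply reading off the "upward-closed in the $\w_1$-direction" shape of the basic neighbourhoods $O(a,b,\al)$. I therefore do not expect a real obstacle: this is essentially the same pigeonhole-on-$\w_1$ idea already used in the \Lin proof for this space, now deployed to bound the levels hit by any countable set from above.
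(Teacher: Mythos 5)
Your proof is correct and follows essentially the same route as the paper's: take the supremum $\al_0$ of the levels $\al_n$ hit by the countable set, note it is a countable ordinal, and exhibit a nonempty basic open set $O(a,b,\g)$ with $\g>\al_0$ (the paper simply takes $\g=\al_0+1$) that misses the set because all its points live at levels $\geq\g$. No gaps; the nonemptiness check via a point of $\Q_\g$ in $(a,b)$ matches the paper's implicit use of the density of $\Q_\g$ in $\R$.
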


\begin{proof}

If $M = \{(x_n , \al_n) : n \in \N\}$ is an arbitrary countable subset of $X$, let us take

\begin{displaymath}
\al = \sup \{\al_n : n \in \N\} +1.
\end{displaymath}
Then $\sup \{ \al_n : n \in \N\}$ is countable as a countable union of countable ordinals, and hence $\al$ is, too. 
Then if $a,b \in \R$, $O(a,b,\al)$ is a nonempty open set in $X$ which does not intersect $M$.

%
%
\setlength{\unitlength}{3500sp}%
\begingroup\makeatletter\ifx\SetFigFont\undefined%
\gdef\SetFigFont#1#2#3#4#5{%
  \reset@font\fontsize{#1}{#2pt}%
  \fontfamily{#3}\fontseries{#4}\fontshape{#5}%
  \selectfont}%
\fi\endgroup%
\begin{picture}(7816,3852)(222,-3226)
{\color[rgb]{0,1,0}\thinlines
\put(1645,-2942){\circle{68}}
}%
{\color[rgb]{0,1,0}\put(4727,-2941){\circle{68}}
}%
{\color[rgb]{0,0,0}\put(1171,-3174){\vector( 0, 1){3788}}
}%
{\color[rgb]{0,0,0}\put(234,-2941){\vector( 1, 0){7792}}
}%
{\color[rgb]{0,.69,.69}\multiput(1179,-933)(120.92035,0.00000){57}{\line( 1, 0){ 60.460}}
}%
\put(900,-2293){\makebox(0,0)[lb]{\smash{{\SetFigFont{12}{14.4}{\rmdefault}{\mddefault}{\updefault}{\color[rgb]{0,0,0}$\al_1$}%
}}}}
\put(1522,-3149){\makebox(0,0)[lb]{\smash{{\SetFigFont{12}{14.4}{\rmdefault}{\mddefault}{\updefault}{\color[rgb]{0,1,0}$a$}%
}}}}
\put(4579,-3130){\makebox(0,0)[lb]{\smash{{\SetFigFont{12}{14.4}{\rmdefault}{\mddefault}{\updefault}{\color[rgb]{0,1,0}$b$}%
}}}}
\put(900,-2026){\makebox(0,0)[lb]{\smash{{\SetFigFont{12}{14.4}{\rmdefault}{\mddefault}{\updefault}{\color[rgb]{0,0,0}$\al_2$}%
}}}}
\put(900,-1554){\makebox(0,0)[lb]{\smash{{\SetFigFont{12}{14.4}{\rmdefault}{\mddefault}{\updefault}{\color[rgb]{0,0,0}$\al_n$}%
}}}}
\put(30,-951){\makebox(0,0)[lb]{\smash{{\SetFigFont{12}{14.4}{\rmdefault}{\mddefault}{\updefault}{\color[rgb]{0,.69,.69}$\sup \al_n = \al_0$}%
}}}}
\put(570,-649){\makebox(0,0)[lb]{\smash{{\SetFigFont{12}{14.4}{\rmdefault}{\mddefault}{\updefault}{\color[rgb]{0,0,0}$\al_0+1$}%
}}}}
\thinlines
{\color[rgb]{0,0,0}\put(234,-2941){\vector( 1, 0){7792}}
}%
{\color[rgb]{0,1,0}\multiput(1645,603)(0.00000,-122.84211){10}{\line( 0,-1){ 61.421}}
\multiput(1645,-564)(120.86275,0.00000){26}{\line( 1, 0){ 60.431}}
\multiput(4727,-564)(0.00000,122.84211){10}{\line( 0, 1){ 61.421}}
}%
\put(2620, 21){\makebox(0,0)[lb]{\smash{{\SetFigFont{14}{16.8}{\rmdefault}{\mddefault}{\updefault}{\color[rgb]{0,1,0}$O(a,b,\al)$}%
}}}}
{\color[rgb]{0,0,0}\put(1171,-3174){\vector( 0, 1){3788}}
}%
\end{picture}%

\\

Hence $X$ is not separable.
\end{proof}

\section{Lindel\"of-type Covering Properties} \label{Lindelofgeneralizations}

Now we shall define the \Lind-type covering properties we mentioned in the very beginning. 
We shall overview some statements for these spaces similar to the results for \Lin spaces.
\subsection{Almost Lindel\"of Spaces}

Almost \Lin spaces were considered in \cite{DW}, where Dissanayeke and Willard generalised  Archangelskii's result about the cardinality of Hausdorff first countable \Lin spaces.

\begin{defn} A topological space $X$ is \emph{almost Lindel\"of} if for every open cover $\U$ of $X$, there exists a countable subfamily $\V \subset \U$ with  $\bigcup_{V \in \V} \bar{V}=X$.
\end{defn} 
It follows immediately from the definition that  every Lindel\"of space is almost Lindel\"of. 
 Dissanayeke and Willard also stated (without proof) that:

\begin{propn} \label{regalisl} A regular almost Lindel\"of space is Lindel\"of.
\end{propn}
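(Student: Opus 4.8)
The plan is to start from an arbitrary open cover $\U$ of $X$ and produce a countable subcover, thereby verifying the Lindel\"of property directly. The key idea is that almost Lindel\"ofness only controls closures, so to extract a genuine subcover we must first \emph{refine} the cover using regularity so that the closures of the refined sets are trapped inside members of $\U$.

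Concretely, here are the steps in order. First, for each point $x\in X$ choose some $U_x\in\U$ with $x\in U_x$; this is possible because $\U$ covers $X$. Next, invoke regularity in the form of Proposition \ref{regularlemma}: since $U_x$ is open and $x\in U_x$, there is an open set $V_x$ with $x\in V_x\subset\bar{V_x}\subset U_x$. The family $\V=\{V_x:x\in X\}$ is then an open cover of $X$. Now apply the almost Lindel\"of hypothesis to $\V$: there is a countable subfamily $\{V_{x_n}:n\in\N\}$ with $\bigcup_{n\in\N}\bar{V_{x_n}}=X$. Finally, use the containment $\bar{V_{x_n}}\subset U_{x_n}$ to conclude
\begin{displaymath}
X=\bigcup_{n\in\N}\bar{V_{x_n}}\subseteq\bigcup_{n\in\N}U_{x_n}\subseteq X,
\end{displaymath}
so that $\{U_{x_n}:n\in\N\}$ is a countable subcover of the original cover $\U$. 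Hence $X$ is Lindel\"of.

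The step I expect to be the crux is the refinement via regularity, and more precisely the realization that one should \emph{not} apply the almost Lindel\"of property to $\U$ itself. Doing so would only yield countably many $U\in\U$ whose closures cover $X$, and without control over those closures there is no way to recover a subcover. Regularity is exactly what upgrades ``closures cover'' into ``the larger sets cover,'' and it is the place where the Hausdorff/regularity hypothesis is genuinely used; the remaining steps are routine bookkeeping with unions and the monotonicity of closure.
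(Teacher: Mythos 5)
Your proof is correct and follows exactly the same route as the paper's: refine the cover by regularity so that $x\in V_x\subset\bar{V_x}\subset U_x$, apply the almost Lindel\"of property to the refinement $\V$, and pass back to the original cover via $\bar{V_{x_n}}\subset U_{x_n}$. Your closing remark about why one must refine rather than apply almost Lindel\"ofness to $\U$ directly is a nice articulation of the idea the paper uses implicitly.
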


\begin{proof}
Let 
\begin{displaymath}
\U=\{U_\al:\al\in A\} 
\end{displaymath}
be an open cover of $X$. 
We choose a countable subcover in the following way:
for any $x\in X$ choose $\al(x)\in A$ with $x\in U_{\al(x)}$. 
By regularity, there is an open $V_x$ with $x\in V_x\subset\bar{V_x}\subset U_{\al(x)}$.
Then 
\begin{displaymath}
\V=\{V_x:x\in X\} 
\end{displaymath}
is an open cover of $X$. 
Since $X$ is almost Lindel\"of we can choose $\{x_n:n\in\N\}\subset X$ such that $\displaystyle{X=\bigcup_{n\in\N}\bar{V_{x_n}}}$.
But $\displaystyle{\bigcup_{n\in\N}\bar{V_{x_n}}\subset \bigcup_{n\in\N} U_{\al(x_n)}}$.
Hence $\{U_{\al(x_n)}:n\in\N\}$ is the required countable subcover of $\U$. 
\end{proof}

However, as should be expected, not every almost \Lin space is \Lind, and also, if we weaken the requirement about regularity in the above proposition, it fails to remain true.

We use ideas from Mysior (\cite{M}) and improve an example from \cite{SZ} in order to obtain the following:

\begin{xmpl} \label{0ex1}  There exists a Urysohn 
almost Lindel\"of space $X$ which is not Lindel\"of.
\end{xmpl}

\begin{construction}
Let $A=\{(a_\al, -1):\al<\w_1\}$ be an $\w_1$-long sequence in the set $\{(x,-1):x\geq 0\}\subseteq\R^2$.
Let $Y=\{(a_\al, n):\al<\w_1,n\in\w\}$. 
Let $a=(-1,-1)$.
Finally let $X=Y\cup A\cup\{a\}$.

We topologize $X$ as follows:
\begin{itemize}
\item[-]
all points in $Y$ are isolated;
\item[-]
For $\al<\w_1$ the basic neighborhoods of $(a_\al,-1)$ will be of the form 
\begin{displaymath}
U_n(a_\al,-1)=\{(a_\al,-1)\}\cup\{(a_\al, m):m\geq n\} \textrm{ for } n\in\w;
\end{displaymath}
\item[-]
The basic neighborhoods of $a=(-1,-1)$ are of the form
\begin{displaymath}
U_\al (a)=\{a\}\cup\{(a_\be , n):\be>\al, n\in\w\} \textrm{ for } \al<\w_1.
\end{displaymath}
\end{itemize}
Let us point out that $A$ is closed and discrete in this topology. 
Indeed,  
for any point $x\in X$ there is a basic neighborhood $U(x)$ such that $A\cap U(x)$ contains at most one point and also that $X\setminus A=\{a\}\cup Y$ is open (because $U_\al(a)\subset Y\cup\{a\}$.
Hence $X$ contains an uncontable closed discrete subset and therefore (by Proposition \ref{closed discrete subsets of HL}) it cannot be \Lind. 

Note that for any open $U\ni a$ the  set $X\setminus\bar{U}$ is at most countable. 
Indeed, for any $\al<\w_1$, $\bar{U_\al(a)}=U_\al(a)\cup\{(a_\be, -1):\be>\al\}$. 
Hence $X\setminus \bar{U_\al(a)}$ is at most countable.

Let us now prove that $X$ is almost \Lind.
Let $\U$ be an open cover of $X$. 
Then there exists a $U(a)\in \U$ such that $a\in U(a)$.
We can find a basic neighborhood $U_\be(a)\subset U(a)$.
Then  $\bar{U_\be(a)}\subset \bar{U(a)}$ and hence $X\setminus \bar{U(a)}$ will also be at most  countable.
Hence $X\setminus \bar{U(a)}$ can be covered by (at most) countably many elements of $\U$.
Therefore $X$ is almost \Lind.

It is easily seen that $X$ is Hausdorff.

$X$ is Urysohn because 
\begin{displaymath}
\bar{U_\al(a)}=U_\al(a)\cup\{(a_\be,-1):\be>\al\},
\end{displaymath}
\begin{displaymath}
\bar{U_n(a_\al,-1)}=U_n(a_\al,-1)\cup\{a\} \textrm{, and }
\end{displaymath}
\begin{displaymath}
\bar{U(a_\al,n)}=U(a_\al,n)=\{(a_\al, n)\}.
\end{displaymath}
\end{construction}

\subsubsection{The Almost Lindel\"of Property  and the Main Topological Operations}

As in the case of \Lin spaces, we have the following results (stated in \cite{SZ} without proofs):

\begin{propn} \label{ctsaL} A continuous image of an almost Lindel\"of space is almost Lindel\"of.
\end{propn}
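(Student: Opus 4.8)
The plan is to mirror, almost verbatim, the standard proof that a continuous image of a Lindel\"of space is Lindel\"of (Proposition \ref{ctsimg}), adapting it to the ``dense union of closures'' formulation. Suppose $f\colon X\to Y$ is continuous and surjective (replacing $Y$ by $f(X)$ if necessary, so that surjectivity is harmless), with $X$ almost Lindel\"of. Let $\U=\{U_\al:\al\in A\}$ be an open cover of $Y$. By continuity, $\{f^{-1}(U_\al):\al\in A\}$ is an open cover of $X$. Since $X$ is almost Lindel\"of, there is a countable subfamily $\{f^{-1}(U_{\al_n}):n\in\N\}$ whose closures cover $X$, that is $X=\bigcup_{n\in\N}\overline{f^{-1}(U_{\al_n})}$.

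The goal is then to conclude that $Y=\bigcup_{n\in\N}\overline{U_{\al_n}}$, so that $\{U_{\al_n}:n\in\N\}$ witnesses the almost Lindel\"of property of $Y$. First I would apply $f$ to both sides of the covering equality, giving $Y=f(X)=\bigcup_{n\in\N}f\left(\overline{f^{-1}(U_{\al_n})}\right)$, where surjectivity of $f$ is used for the first equality and the fact that images commute with unions for the second. The key step is the inclusion $f\left(\overline{f^{-1}(U_{\al_n})}\right)\subseteq\overline{U_{\al_n}}$, which is exactly the content of Proposition \ref{ctspropn}: for a continuous map, $f(\bar{A})\subseteq\overline{f(A)}$ for every $A\subseteq X$. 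Taking $A=f^{-1}(U_{\al_n})$ and using $f(f^{-1}(U_{\al_n}))\subseteq U_{\al_n}$, we obtain $f\left(\overline{f^{-1}(U_{\al_n})}\right)\subseteq\overline{f(f^{-1}(U_{\al_n}))}\subseteq\overline{U_{\al_n}}$. Combining, $Y\subseteq\bigcup_{n\in\N}\overline{U_{\al_n}}$, and the reverse inclusion is trivial, so $Y=\bigcup_{n\in\N}\overline{U_{\al_n}}$, as required.

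I do not expect any genuine obstacle here; the whole argument is a routine transport of the Lindel\"of proof across the closure operator, and the single nontrivial ingredient is the characterization of continuity via $f(\bar A)\subseteq\overline{f(A)}$ already recorded as Proposition \ref{ctspropn}. The only point worth a moment's care is the surjectivity reduction: ``continuous image'' means we work with $f\colon X\to f(X)$, and to cover $f(X)$ by closures taken \emph{in} $f(X)$ we should note that closures computed in the subspace $f(X)$ are contained in the ambient closures intersected with $f(X)$, so nothing is lost. With that understood, the proof is essentially three lines and reuses only earlier results from the excerpt.
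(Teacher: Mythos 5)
Your proof is correct and follows essentially the same route as the paper's: pull the cover back along $f$, extract a countable subfamily whose closures cover $X$ using the almost Lindel\"of property, and push forward via the key inclusion $f\bigl(\overline{f^{-1}(U)}\bigr)\subseteq\overline{f(f^{-1}(U))}\subseteq\overline{U}$ from Proposition \ref{ctspropn}. The only cosmetic difference is that the paper verifies the conclusion pointwise (fixing $y\in Y$ and working with $f^{-1}(y)$) while you apply $f$ to the covering equality globally, which is if anything slightly cleaner.
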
  
\begin{proof}
Let $f:X\rightarrow Y$ be continuous, $X$ be almost Lindel\"of and $\U$ be an open cover for $Y$. 
Then $\V=\{f^{-1}(U):U\in\U\}$ is an open cover for $X$, and hence it has a countable subset
$\V'\subset\V$ with $\displaystyle{\bigcup_{V\in\V'}\bar{V}}=X$. 
Then $\U'=\{U\in\U:f^{-1}(U)\in\V'\}$ is the required subcollection. 
Indeed, let $y\in Y$, then $f^{-1}(y)\subset X$, 
so there exists a subcollection $\V''\subset\V'$ with $\displaystyle{\bigcup_{V_n\in\V''}V_n}\supseteq f^{-1}(y)$.
Then $f(\bar{V_n})=f(\bar{f^{-1}(U_n)})\subseteq \bar{U_n}$.
So $\displaystyle{y\in \bigcup_{f^{-1}(U_n)\in\V''}\bar{U_n}}$.
\end{proof}

\begin{propn} Quotient spaces of almost \Lin spaces are almost \Lin (since the quotient map is continuous).
\end{propn}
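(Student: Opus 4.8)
The plan is to derive this as an immediate corollary of Proposition \ref{ctsaL}. Recall that a quotient space of $X$ is obtained from an equivalence relation $\sim$ on $X$ together with the associated quotient map $q:X\rightarrow X/{\sim}$, where $X/{\sim}$ carries the quotient topology: the finest topology on the underlying set of equivalence classes that makes $q$ continuous. Two features of $q$ are all I would need. First, $q$ is \emph{surjective}, since every equivalence class is the image of any of its representatives, so $X/{\sim}=q(X)$. Second, $q$ is \emph{continuous}, which holds by the very definition of the quotient topology (it is precisely the topology declaring a set open exactly when its preimage under $q$ is open, so preimages of open sets are open).

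With these two observations in place, the quotient space $X/{\sim}$ is exhibited as the continuous image of the almost \Lin space $X$ under the map $q$. I would then simply invoke Proposition \ref{ctsaL}, which states that a continuous image of an almost \Lin space is almost \Lind, and conclude that $X/{\sim}$ is almost \Lind.

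There is no genuine obstacle here, as the parenthetical remark in the statement already signals: the whole content reduces to the fact that the quotient map is a continuous surjection, and both properties are built directly into the definition of the quotient topology. The only point I would take care to stress is that Proposition \ref{ctsaL} must be applied to $q$ viewed as a map \emph{onto} its image $X/{\sim}$, using surjectivity, so that the covering property is inherited by the entire quotient space rather than merely by a proper subspace of some larger codomain.
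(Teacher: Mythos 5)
Your proposal is correct and follows exactly the paper's intended argument: the parenthetical remark in the statement is the entire proof, namely that the quotient map is a continuous surjection onto the quotient space, so Proposition \ref{ctsaL} applies directly. Your added emphasis on surjectivity (so the image is all of $X/{\sim}$) is a sound and worthwhile clarification, but not a different route.
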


\begin{propn} The countable disjoint sum $\bigoplus_{n \in \N} X_n$ is almost Lindel\"of if and only if all spaces $X_n$ are almost-Lindel\"of.
\end{propn}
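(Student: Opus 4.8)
The plan is to prove the two implications separately, exploiting the fact that in the disjoint sum each summand $X_m$ sits as a \emph{clopen} subspace. The single technical observation underpinning both directions is the following closure identity: if $X_m$ is clopen in $\bigoplus_{n\in\N}X_n$ and $V\subseteq X_m$, then $\bar{V}=\bar{V}^{X_m}$, i.e. the closure of $V$ taken in the whole sum coincides with its closure taken inside $X_m$. The portion of $\bar{V}$ lying outside $X_m$ is empty because $X_m$ is open (no point off $X_m$ can be a limit of a set contained in the open set $X_m$'s complement's complement), while $\bar{V}^{X_m}$ is already closed in the sum because $X_m$ is closed. I would establish this identity once and invoke it throughout.

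For the backward direction ($\Leftarrow$), assume each $X_n$ is almost Lindel\"of and let $\U$ be an open cover of the sum. First I would restrict to each summand: $\{U\cap X_n:U\in\U\}$ is an open cover of $X_n$, so by almost Lindel\"ofness there is a countable $\U_n\subseteq\U$ with $\bigcup_{U\in\U_n}\bar{U\cap X_n}^{X_n}=X_n$. Setting $\V=\bigcup_{n\in\N}\U_n$ produces a countable subfamily of $\U$, being a countable union of countable families. It then remains to check that $\bigcup_{U\in\V}\bar{U}$ equals the whole sum: any point lies in some $X_n$, hence in some $\bar{U\cap X_n}^{X_n}$ with $U\in\U_n\subseteq\V$, and the clopen identity gives $\bar{U\cap X_n}^{X_n}=\bar{U\cap X_n}\subseteq\bar{U}$. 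Thus the union of closures is everything, and the sum is almost Lindel\"of.

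For the forward direction ($\Rightarrow$), assume the sum is almost Lindel\"of and fix an index $m$; given an open cover $\U$ of $X_m$, I would extend it to an open cover of the whole sum by adjoining the remaining clopen pieces. Each $U\in\U$ is open in $X_m$, hence open in the sum since $X_m$ is open, so $\U\cup\{X_n:n\neq m\}$ is an open cover of $\bigoplus_{n\in\N}X_n$. Almost Lindel\"ofness yields a countable subfamily $\W$ with $\bigcup_{U\in\W}\bar{U}$ equal to the sum. Here the adjoined pieces are harmless inside $X_m$: for $n\neq m$ we have $\bar{X_n}=X_n$, which is disjoint from $X_m$, so they contribute nothing to covering $X_m$. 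Consequently $\W\cap\U$ is a countable subfamily of $\U$ whose closures already cover $X_m$, i.e. $\bigcup_{U\in\W\cap\U}\bar{U}^{X_m}=X_m$, so $X_m$ is almost Lindel\"of.

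The remaining steps are pure bookkeeping. The only point that genuinely needs care --- and what I would flag as the main (if mild) obstacle --- is the interplay between closures computed in the subspace $X_m$ and those computed in the ambient sum. Once the clopen closure identity is in place, both the inclusion $\bar{U\cap X_n}^{X_n}\subseteq\bar{U}$ used in ($\Leftarrow$) and the irrelevance of the extra summands used in ($\Rightarrow$) become immediate, and the countability of $\V=\bigcup_n\U_n$ follows from the standard fact that a countable union of countable sets is countable.
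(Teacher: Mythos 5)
Your proof is correct. The paper itself states this proposition without proof (it is one of the results quoted from the literature), so there is no proof of record to compare against; but your argument fits squarely into the paper's own toolkit. Your forward direction is in fact a special case of the proposition the paper states and proves immediately afterwards --- every clopen subset of an almost \Lin space is almost \Lind --- and it uses the same device: adjoin to the cover of the clopen piece the complementary clopen material (there $X\setminus F$, here the other summands $X_n$, $n\neq m$), extract a countable subfamily, and observe that the adjoined sets have closures disjoint from the piece in question. Your backward direction is the standard gluing argument: restrict the cover to each summand, take a countable subfamily $\U_n$ for each $n$, and let $\V=\bigcup_{n\in\N}\U_n$. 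The one load-bearing technical point, as you correctly identify, is the closure identity $\bar{V}=\bar{V}^{X_m}$ for $V\subseteq X_m$ clopen in the sum, which is what lets you pass between subspace and ambient closures in both directions. Your parenthetical justification of that identity is garbled as written (``the open set $X_m$'s complement's complement''); the clean statement is simply that $X\setminus X_m$ is open and disjoint from $V$, so no point outside $X_m$ lies in $\bar{V}$, while conversely $\bar{V}^{X_m}$ is already closed in the sum because $X_m$ is closed. With that sentence repaired, the proof is complete and could be inserted into the paper as is.
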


For the inheritance of the almost \Lin property, we have a weaker result than in the \Lin case:

\begin{propn} If $X$ is almost Lindel\"of, then any clopen subset of $X$ is almost Lindel\"of.
\end{propn}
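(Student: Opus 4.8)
The plan is to take a clopen subset $C \subseteq X$ and an open cover $\U$ of $C$ (in the subspace topology), and to manufacture from it an open cover of the whole space $X$ whose countable ``dense-union'' subfamily restricts back to a witness for the almost Lindel\"of property of $C$. The key observation is that because $C$ is \emph{clopen}, sets that are open in $C$ are already open in $X$ (openness of $C$), and closures taken in $C$ agree with closures taken in $X$ as long as we stay inside $C$ (closedness of $C$).

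First I would note that since $C$ is open in $X$, every $U \in \U$ is open in $X$ as well, so $\U$ is already a family of open subsets of $X$. To turn it into a cover of $X$, I would adjoin the single open set $X \setminus C$, which is open precisely because $C$ is closed. Thus $\U^{*} = \U \cup \{X \setminus C\}$ is an open cover of $X$. Since $X$ is almost Lindel\"of, there is a countable subfamily $\V^{*} \subseteq \U^{*}$ with $\bigcup_{V \in \V^{*}} \bar{V} = X$, where closures are taken in $X$.

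Next I would discard the possibly-present set $X \setminus C$ from $\V^{*}$, setting $\V = \V^{*} \cap \U$; this is still countable. The claim is that $\bigcup_{V \in \V} \bar{V}^{C} = C$, where $\bar{V}^{C}$ denotes closure in the subspace $C$. The crucial step uses that $C$ is closed in $X$: for any $V \subseteq C$, the $X$-closure $\bar{V}$ is contained in $C$, and in fact $\bar{V}^{C} = \bar{V} \cap C = \bar{V}$. Meanwhile, discarding $X \setminus C$ loses nothing over $C$, since $\overline{X \setminus C} = X \setminus C$ (as $X \setminus C$ is clopen) is disjoint from $C$. Hence for any point $x \in C \subseteq X = \bigcup_{V \in \V^{*}} \bar{V}$, the point $x$ must lie in $\bar{V}$ for some $V \in \V^{*}$ other than $X \setminus C$, i.e.\ for some $V \in \V$, and then $x \in \bar{V} = \bar{V}^{C}$. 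This gives $C \subseteq \bigcup_{V \in \V} \bar{V}^{C}$, and the reverse inclusion is automatic.

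I do not expect any serious obstacle here; the proof is a routine adaptation of the ``restrict a global cover'' technique, and the only point requiring care is the interchange of closures, which is exactly where both halves of the clopen hypothesis are consumed. The reason the result is weaker than in the genuine Lindel\"of case (where \emph{closed} subspaces inherit the property) is that the argument above fundamentally needs $C$ to be \emph{open} in order to view $\U$ as open in $X$ and to know that $X \setminus C$ is a legitimate open ``filler''; closedness alone does not supply this, so the stronger closed-subspace theorem has no analogue for almost Lindel\"ofness.
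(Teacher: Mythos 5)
Your proof is correct and follows essentially the same route as the paper: adjoin the open set $X\setminus C$ to the cover, apply almost Lindel\"ofness of $X$, and discard $X\setminus C$ using the fact that, being clopen, it equals its own closure and so misses $C$. Your version is simply more explicit about the (harmless) distinction between closures in $C$ and in $X$, which the paper's terser proof leaves implicit.
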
 
\begin{proof}
Let $F\subset X$ be clopen, and $\U$ be a cover for $F$.
Then $\U\cup(X\setminus F)$ is an open cover for $X$, hence there is a countable subfamily 
 $\U'$ of $\U$
 such that $\displaystyle{ X=\bigcup_{U\in\U'}\bar{U}}\cup\bar{X\setminus F}$.
Since $X\setminus F$ is also clopen,  $\U'$ is the required subcollection. 
\end{proof}

Let us point out that (as in the \Lin case), the product of even two almost \Lin spaces might not be almost \Lin, and even more:
 the product of two \Lin spaces might not be almost \Lind. We have the following example: 

\begin{xmpl}
\label{SxSnotaL}
$\Sorg\times\Sorg$ is not almost \Lind.
\end{xmpl}

\begin{proof}
We have seen that $\Sorg$ is \Lind.
$\Sorg\times\Sorg$ is not almost \Lin because it is regular (as a product of two regular spaces) and not \Lind.
But we showed in Proposition \ref{regalisl} that in regular spaces, \Lin and  almost \Lin coincide.
\end{proof}

However, in \cite{SZ}, it is shown that:

\begin{thm} \label{productWLCpct} If $X$ is almost Lindel\"of and $Y$ is compact, then $X \times Y$ is almost Lindel\"of.
\end{thm}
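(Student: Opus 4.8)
The plan is to adapt the classical argument that the product of a Lindel\"of space with a compact space is Lindel\"of, replacing the extraction of a countable subcover by the weaker extraction of a countable subfamily whose closures cover, and then exploiting two elementary closure identities to push this through the product. First I would take an arbitrary open cover $\W$ of $X\times Y$. For each fixed $x\in X$, the slice $\{x\}\times Y$ is homeomorphic to the compact space $Y$, so finitely many members of $\W$ cover it; by the standard Tube Lemma argument (cover the slice by basic boxes lying inside members of $\W$, extract a finite subcover, and intersect the finitely many first--coordinate factors containing $x$) there is an open neighborhood $W_x\ni x$ in $X$ together with a \emph{finite} subfamily $\W_x\subset\W$ satisfying $W_x\times Y\subset\bigcup\W_x$.

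Next I would observe that $\{W_x:x\in X\}$ is an open cover of $X$. Since $X$ is almost Lindel\"of, I can extract a countable subfamily $\{W_{x_n}:n\in\N\}$ with $\bigcup_{n\in\N}\bar{W_{x_n}}=X$. My candidate countable subfamily of $\W$ is then $\bigcup_{n\in\N}\W_{x_n}$, which is countable as a countable union of finite sets. It remains to verify that the closures of its members cover $X\times Y$.

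The crux is two standard facts about closures. First, in the product topology $\bar{W_x\times Y}=\bar{W_x}\times\bar{Y}=\bar{W_x}\times Y$, the last equality holding because $Y$ is closed in itself. Second, since each $\W_x$ is \emph{finite}, the closure of its union equals the union of the closures, so $\bar{\bigcup\W_x}=\bigcup_{W\in\W_x}\bar{W}$. Applying the closure operator to $W_x\times Y\subset\bigcup\W_x$ and combining these identities gives $\bar{W_x}\times Y\subset\bigcup_{W\in\W_x}\bar{W}$. Taking the union over $n$ and using $\bigcup_n\bar{W_{x_n}}=X$ then yields $X\times Y=\bigcup_n(\bar{W_{x_n}}\times Y)\subset\bigcup_n\bigcup_{W\in\W_{x_n}}\bar{W}$, which is exactly the required covering by closures.

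I expect the main obstacle to be ensuring that the almost-Lindel\"of property on $X$, which only controls the closures $\bar{W_{x_n}}$ rather than the $W_{x_n}$ themselves, transfers to a covering-by-closures in the product. This is precisely the point where the finiteness of each $\W_x$ is indispensable: it is what lets me commute the closure operator through the union $\bigcup\W_x$ without enlarging the family, so that the relation $\bar{W_x}\times Y\subset\bigcup_{W\in\W_x}\bar{W}$ survives. The Tube Lemma step, by contrast, is routine once the compactness of $Y$ is invoked.
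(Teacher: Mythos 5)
Your proof is correct and follows essentially the same route as the paper's: a Tube-Lemma construction of open neighborhoods $W_x$ with associated finite subfamilies, followed by applying the almost \Lin property to the cover $\{W_x : x\in X\}$ and passing to closures. The only cosmetic differences are that you retain finite subfamilies of the \emph{original} cover (so you never need the paper's preliminary reduction to covers by basic boxes) and that you verify the final inclusion globally via the identities $\bar{A\times Y}=\bar{A}\times Y$ and closure-of-finite-union, where the paper instead runs a pointwise check on an arbitrary $(s,t)\in X\times Y$.
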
  
\begin{proof}
It is sufficient to prove that for any cover $\U$ of basic open 
sets in $X\times Y$, i.e. $\U\subseteq\{U\times V:U\in\tau_X, V\in\tau_Y\}$ we can choose the required subfamily.

For an arbitrary but fixed $x\in X$, 
 $\{x\} \times Y$ is a compact subset of $X \times Y$.
Hence, there exists a finite subcollection $\{U_{x_i} \times V_{x_i} : i=1,2,... n_x\}$ of $\U$ such that
\begin{displaymath}
\{x\} \times Y \subseteq \bigcup \{U_{x_i} \times V_{x_i} : 1 \leq i \leq n_x\}. 
\end{displaymath}
Let $W_x = \bigcap \{U_{x_i} : 1 \leq i\leq n_x\}$ (hence $W_x$ is open in $X$ and $x\in W_x$). 
Then again
\begin{displaymath}
\{x\} \times Y \subseteq \bigcup \{W_{x} \times V_{x_i} : 1 \leq i \leq n_x\}. 
\end{displaymath}
 $\mathcal{W} = \{W_x : x \in X\}$ is an open cover of $X$. 
Since $X$ is almost Lindel\"of, we can choose a countable subfamily $\{W_{x_j} : j \in \N\}$ of $\mathcal{W}$ whose
closures cover $X$. 
We claim that
\begin{displaymath}
\V = \{U_{x_{j_i}}\times V_{x_{j_i}}: 1 \leq i \leq n_{x_j} , j \in \N\}.
\end{displaymath}
 is the required countable subcollection of $\U$. 

So let $(s,t)\in X\times Y$ be arbitrary but fixed. 
Then $s\in X=\bigcup_{j \in \N} \bar{W}_{x_j}$, hence there exists an $x_{j_0}$ such that $s\in \bar{W}_{x_{j_0}}$. 
Moreover, since $\{W_{x_{j_0}} \times V_{x_i} : 1\leq i \leq n_{x_{j_0}}\}$ covers $\{x_{j_0}\}\times Y$, there is an $i_0$
with $(x_{j_0},t)\in W_{x_{j_0}} \times V_{x_{i_0}}$.
And since $s\in \bar{W}_{x_{j_0}}$, we have that $(s,t) \in \bar{ W}_{x_{j_0}} \times V_{x_{i_0}}$.
By definition of $W_{x_{j_0}}$, we have that there is a $U_{x_{j_0}}$ with $W_{x_{j_o}}\subset U_{x_{j_0}}$, and 
hence $(s,t) \in \bar{U}_{x_{j_0}} \times V_{x_{i_0}}\subset \bar{U_{x_{j_0}} \times V_{x_{i_0}}}$.
\end{proof}

\subsection{Weakly Lindel\"of Spaces}

Weakly \Lin spaces were introduced in 1959 by Frolik in \cite{Fro}. 
 Here, we consider interrelations between the weakly \Lin property and other topological properties, as well as its preservation (or destruction) under subsets, products, and continuous maps.

\begin{defn} \cite{Fro} $X$ is  \emph{weakly Lindel\"of} if for every open cover $\U$ of $X$, there exists a countabe subset $\V$ of $\U$ with $\bar{\bigcup_{V \in \V} V} = X$. 
\end{defn}

\begin{xmpl} Any Lindel\"of (and, as we shall see, any CCC, and hence separable) space is weakly Lindel\"of. 
\end{xmpl}

\begin{xmpl} Any uncountable set $X$ with the discrete topology is not weakly Lindel\"of. 
\end{xmpl}

Bell, Ginsburg, and Woods in \cite{BGW} considered the weakly \Lin property in the settings of cardinal invariants of topological spaces. Here, we present the proof of the following theorem which they only stated:

\begin{thm} \label{CCCwL}If $X$ is CCC then $X$ is weakly Lindel\"of. 
\end{thm}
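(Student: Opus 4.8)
The plan is to produce the required dense countable subfamily via a maximal disjoint \emph{refinement} of the cover, using CCC to force that refinement to be countable. First I would fix an arbitrary open cover $\U$ of $X$ and consider the collection $\mathcal{A}$ of all families $\mathcal{D}$ of nonempty open subsets of $X$ that are pairwise disjoint and have the property that each member of $\mathcal{D}$ is contained in some member of $\U$. Ordered by inclusion, $\mathcal{A}$ satisfies the hypotheses of Zorn's Lemma: the union of a chain in $\mathcal{A}$ is again pairwise disjoint (any two of its members already appear together in a single link of the chain) and still refines $\U$, so it is an upper bound lying in $\mathcal{A}$. Hence there is a maximal family $\mathcal{D}\in\mathcal{A}$.

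Next I would invoke the hypothesis: the members of $\mathcal{D}$ form a family of nonempty pairwise disjoint open sets, so Definition \ref{CCC} gives that $\mathcal{D}$ is countable, say $\mathcal{D}=\{D_n : n\in\N\}$. For each $n$ I choose, using that $\mathcal{D}$ refines $\U$, some $U_n\in\U$ with $D_n\subseteq U_n$, and set $\V=\{U_n : n\in\N\}$, a countable subfamily of $\U$.

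It remains to verify $\bar{\bigcup \V}=X$, and this is where maximality is used; I expect this step to be the crux. Suppose for contradiction that the closure is a proper subset of $X$. Then $W=X\setminus\bar{\bigcup\V}$ is a nonempty open set disjoint from $\bigcup\V$, hence disjoint from every $U_n$ and \emph{a fortiori} from every $D_n$. Choosing any $x\in W$ and, since $\U$ covers $X$, some $U\in\U$ with $x\in U$, the set $W\cap U$ is a nonempty open subset of a member of $\U$ that meets no $D_n$. Then $\mathcal{D}\cup\{W\cap U\}$ is a strictly larger member of $\mathcal{A}$, contradicting the maximality of $\mathcal{D}$. Therefore $\bar{\bigcup\V}=X$, so $\V$ witnesses that $X$ is weakly Lindel\"of.

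The main obstacle is setting up the maximal family correctly: it must \emph{refine} $\U$ rather than merely be a subfamily of $\U$. A maximal pairwise disjoint subfamily of $\U$ itself can fail (consider a cover whose members pairwise intersect, where such a subfamily collapses to a single set with non-dense union), so allowing the members of $\mathcal{D}$ to be arbitrary open subsets of members of $\U$ is exactly what lets CCC bound its size while still permitting a gap in $\bar{\bigcup\V}$ to contradict maximality.
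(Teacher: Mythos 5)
Your proof is correct, but it follows a genuinely different route from the paper's. You argue directly: Zorn's Lemma produces a maximal pairwise disjoint family $\mathcal{D}$ of nonempty open sets refining the given cover $\U$, CCC forces $\mathcal{D}$ to be countable, and maximality forces the countable subfamily $\V$, obtained by choosing a cover member above each $D_n$, to have dense union. Your closing remark about why $\mathcal{D}$ must \emph{refine} $\U$ rather than be a subfamily of it is exactly the right subtlety: a maximal disjoint subfamily of a cover whose members pairwise meet collapses to a single, possibly non-dense, set. The paper instead proves the contrapositive by transfinite recursion: assuming $X$ is not weakly Lindel\"of, there is a cover $\G$ such that at every countable stage $\g<\w_1$ the closure $\bar{\bigcup_{\de<\g}U_{\al_\de}}$ of the members chosen so far misses part of $X$; one then picks $U_{\al_\g}\in\G$ meeting the complement, sets $V_\g=U_{\al_\g}\cap\bigl(X\setminus\bar{\bigcup_{\de<\g}U_{\al_\de}}\bigr)$, and the family $\{V_\g:\g<\w_1\}$ is an uncountable disjoint family of nonempty open sets, violating CCC. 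The trade-off is this: your argument avoids ordinals and transfinite recursion altogether, packaging the combinatorics into a single appeal to Zorn's Lemma and producing, for each cover, an explicit maximal cellular family that witnesses weak Lindel\"ofness; the paper's recursion needs no maximality principle and shows concretely how a failure of weak Lindel\"ofness manufactures the uncountable cellular family, at the cost of an $\w_1$-length construction. Both use the axiom of choice in comparable ways, and both are complete.
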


\begin{proof}  
Suppose that $X$ is not weakly Lindel\"of, i.e.
that there exists an uncountable open cover $\G$ of $X$ with nonempty sets such that 
for each of its countable subfamilies $\G' \subset \G$, we have that $X\setminus \bar{\bigcup \G'} \neq \emptyset$. 
We then construct a countable chain of disjoint open nonempty sets indexed by $\g,\g < \w_1$, hence showing that $X$ is not CCC. 

Let us write $\G=\{U_\al:\al<\be\}$ for some uncountable ordinal $\be$.
Pick $U_0 \in \G$. 
Then $X\setminus\bar{U_0}\neq \emptyset$. 
So, we have that $\displaystyle{X \setminus \bar{U_0} \subseteq \bigcup_{0<\al<\be} U_\al}$.  

Since $X\setminus\bar{U_0}\neq \emptyset$, there exists an $\al_1 > 1$ with $U_{\al_1} \cap( X\setminus\bar{U_0}) \neq \emptyset$. 
Let $V_1=U_{\al_1} \cap (X\setminus\bar{U_0})$. 
Then $V_1 \neq \emptyset$, $V_1$ is open, and $V_1 \cap V_0 = \emptyset$.

Now, $\displaystyle{X \setminus \bar{U_0 \cup U_{\al_1}} \subseteq \bigcup_{0<\al<\be,\al\neq \al_1} U_\al}$, and 

\begin{displaymath}
X \setminus \bar{U_0 \cup U_{\al_1}} = \bigcup_{0<\al<\be,\al\neq \al_1} U_\al \cap (X \setminus \bar{U_0 \cup U_{\al_1}})
\end{displaymath}

Hence, there exists a $U_{\al_2} \in \G$ such that $U_{\al_2} \cap \left( X \setminus \bar{U_0 \cup U_{\al_1}}\right) \neq \emptyset$. 
Then, we define $V_2=U_{\al_2} \in \G$ as $U_{\al_2} \cap \left( X \setminus \bar{U_0 \cup U_{\al_1}}\right)$. Obviously, $V_2$ is open and disjoint from $V_0,V_1$. 

We continue inductively as follows.
Let $\g < \w_1 (\leq \be)$. 
Suppose we have already constructed $V_\de \neq \emptyset$ open in $X$ such that the family $\{V_\de : \de < \g\}$ is disjoint.
Then $X\setminus \bar{\bigcup_{\de < \g} U_{\al_\de}} \neq \emptyset$ because $\g$ is a countable ordinal (and our assumption is that $X$ is not weakly Lindel\"of). 
Then, 
\begin{displaymath}
X\setminus \bar{\bigcup_{\de < \g} U_{\al_\de}} = \bigcup_{\al<\be, \al \notin \{\al_\de : \de<\g\}}U_\al\cap \left(X \setminus \bar{\cup_{\de<\g}U_{\al_\de}}\right).
\end{displaymath}
Hence, as the left side of the equality is nonempty, so is the right, so we can choose
$U_{\al_\g}$ such that $U_{\al_\g} \cap \left(X \setminus \bar{\cup_{\de<\g}U_{\al_\de}}\right)\neq \emptyset$.
Then, we define $V_\g = U_{\al_\g} \cap \left(X \setminus \bar{\cup_{\de<\g}U_{\al_\de}}\right) \neq \emptyset$. 
Hence, $V_\g$ is nonepmty, open, and by construction, we have that $V_\g \cap V_\de = \emptyset$ for every $\de<\g$.

In this way,  we have constructed the uncountable family - $\{V_\g : \g < \w_1\}$ - 
of disjoint nonempty open sets, hence showing that $X$ is not CCC. 
\end{proof}

\begin{propn} \label{aLwL} Every almost Lindel\"of space is weakly Lindel\"of. 
\end{propn}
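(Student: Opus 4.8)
The plan is to show that a \emph{single} countable subfamily simultaneously witnesses both properties: the subfamily $\V$ produced by the almost Lindel\"of condition for a given cover will already satisfy the weakly Lindel\"of condition for that same cover. The entire content of the proof is the elementary set-theoretic inclusion relating a union of closures to the closure of a union, so no new construction is needed beyond invoking the hypothesis.

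First I would isolate the purely topological fact that for any family $\{V_\al\}$ of subsets of $X$ one has $\bigcup_\al \bar{V_\al} \subseteq \bar{\bigcup_\al V_\al}$. This follows from monotonicity of the closure operator: for each fixed index $\al$ we have $V_\al \subseteq \bigcup_\be V_\be$, hence $\bar{V_\al} \subseteq \bar{\bigcup_\be V_\be}$; taking the union over all $\al$ of the left-hand sides gives the claimed inclusion. This step is wholly formal and requires nothing beyond the definition of closure.

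Next I would combine this with the hypothesis. Let $\U$ be an arbitrary open cover of $X$. Since $X$ is almost Lindel\"of, there is a countable $\V \subseteq \U$ with $\bigcup_{V \in \V} \bar{V} = X$. Applying the inclusion from the previous paragraph to the family $\V$ yields
\begin{displaymath}
X = \bigcup_{V \in \V} \bar{V} \subseteq \bar{\bigcup_{V \in \V} V} \subseteq X,
\end{displaymath}
so $\bar{\bigcup_{V \in \V} V} = X$. As $\V$ is a countable subfamily of $\U$, this is exactly the weakly Lindel\"of condition, and since $\U$ was arbitrary, $X$ is weakly Lindel\"of.

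I do not anticipate any genuine obstacle here: the proposition is essentially a formal consequence of the fact that $\bigcup \bar{V} \subseteq \bar{\bigcup V}$, which is where the two definitions differ. The only point worth stating carefully is that the same witnessing subfamily $\V$ transfers from one definition to the other, so there is no need to re-select sets from the cover.
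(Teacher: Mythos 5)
Your proof is correct and follows exactly the paper's own argument: both apply the almost Lindel\"of hypothesis to get a countable $\V\subseteq\U$ and then invoke the inclusion $\bigcup_{V\in\V}\bar{V}\subseteq\bar{\bigcup_{V\in\V}V}$ to transfer the witnessing family. Your version merely spells out the monotonicity-of-closure justification for that inclusion, which the paper leaves implicit.
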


\begin{proof} Let $X$ be almost Lindel\"of, and $\U$ be an open cover of $X$.
Then there exists a countable subset $\V$ of $\U$ with $ \bigcup_{V \in \V} \bar{V} = X$. 
As $\bigcup_{V \in \V} \bar{V} \subset \bar{\bigcup_{V \in \V} V}$, we also have that $X$ is weakly Lindel\"of. 
\end{proof}

As the following example shows, the converse is not true.

\begin{xmpl}\label{inverse}
The Sorgenfrey plane is regular weakly Lindel\"of, but not almost Lindel\"of (and hence, not \Lin).
\end{xmpl}

\begin{proof}
Example \ref{SxSnotaL} shows that  $\Sorg \times \Sorg$ is not almost Lindel\"of.
However, it is separable, since $\Q\times\Q$ is dense in $\Sorg \times \Sorg$, and hence $\Sorg \times \Sorg$ is CCC. 
By Theorem \ref{CCCwL}, it is weakly Lindel\"of. 
\end{proof}

This example also shows, that, unlike in the almost \Lin case, in regular spaces the weakly \Lin and \Lin properties do not coincide. 

Next, we present another example of a weakly \Lin not \Lin space, which is given in \cite{BGW}. It is not regular, but, unlike $\Sorg\times\Sorg$, it has arbitrary cardinality. We will show that it has some additional properties, besides the ones mentioned in the article. 
We show that in this space, the weakly \Lin property is not inherited by  closed subspaces, which is a point of difference from \Lin spaces. 
We point out that it is not CCC (hence not separable), so the converse of Theorem \ref{CCCwL}
 does not hold. 
We also show that it does not possess another \Lind-type covering property, i.e. it is not quasi-\Lind.
The definition of the latter is the following:

\begin{defn} \label{qL} A space is called \emph{quasi-Lindel\"of} if every closed subset of it is weakly Lindel\"of. 
\end{defn} 
This notion was introduced by Archangelski in \cite{A}. We will consider it in more detail in section \ref{quasi-Lindelof}.

\begin{xmpl} \cite{BGW} \label{BGW} A Hausdorff, first countable, weakly Lindel\"of space that is neither Lindel\"of nor quasi-Lindel\"of. 
\end{xmpl}

\begin{construction}
We will denote the irrational numbers by $\I$. 
Let $\kappa$ be an arbitrary uncountable cardinal number, $A\subset\I$ be countable and dense in $\I$ (and hence also dense in $\R$). 
Let $Z=(\Q\times\ka)\cup A$.
Note $A\cap \Q=\emptyset$.
For every $(q,\al)\in\Q\times\ka$ define a neighborhood base
\begin{displaymath}
U_n(q,\al)=\{(r,\al):r\in\Q\wedge|r-q|<\n   \},\ n\in\N.
\end{displaymath}
For every $a\in A$, define a neighborhood base
\begin{displaymath}
U_n(a)=\{b\in A:|b-a|<\n\}\cup\{(q,\al):\al<\ka\wedge|q-a|<\n\}, \ n\in\N.
\end{displaymath}

\includegraphics[width=15cm]{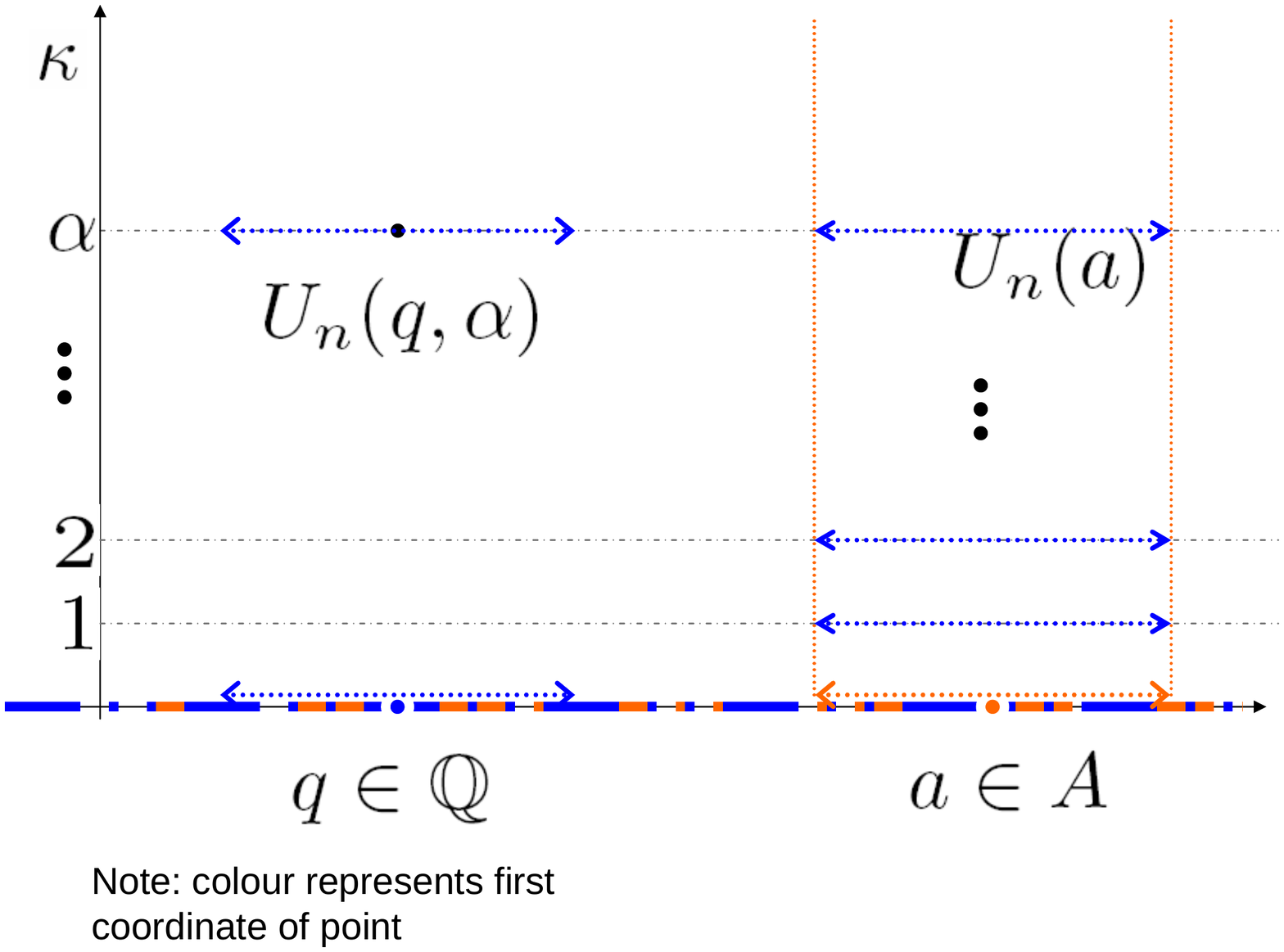}

Let $\tau$ be the topology generated by $\{U_n(q,\al):(q,\al)\in\Q\times\ka\}\cup \{ U_n(a):a\in A, n\in\N\}$.
Then $(Z,\tau)$ is first countable by construction.
\end{construction}
\begin{cl}
$Z$ is Hausdorff.
\end{cl}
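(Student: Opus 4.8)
The plan is to reduce Hausdorffness to the behaviour of the first coordinate by introducing a projection $\pi : Z \to \R$ defined by $\pi(q,\al) = q$ for $(q,\al) \in \Q\times\ka$ and $\pi(a) = a$ for $a \in A$. The key observation I would record first is that every basic neighborhood lies, after projecting, inside a short interval centred at the projection of its base point: namely $\pi(U_n(q,\al)) \subseteq (q - \n, q + \n)$ and $\pi(U_n(a)) \subseteq (a - \n, a + \n)$. This is immediate from the definitions, since $U_n(q,\al)$ contains only points $(r,\al)$ with $|r - q| < \n$, while $U_n(a)$ contains only irrationals $b \in A$ with $|b - a| < \n$ (which project to themselves) together with rationals $(q,\be)$ with $|q - a| < \n$ (spread over all levels $\be < \ka$, but all projecting into the same interval).

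With this in hand, I would take two distinct points $p_1, p_2 \in Z$ and split into two cases according to whether their projections coincide. If $\pi(p_1) \neq \pi(p_2)$, I would choose $n \in \N$ with $\n < \tfrac12 |\pi(p_1) - \pi(p_2)|$; then the intervals $(\pi(p_1) - \n, \pi(p_1) + \n)$ and $(\pi(p_2) - \n, \pi(p_2) + \n)$ are disjoint, so by the projection estimate $U_n(p_1)$ and $U_n(p_2)$ are disjoint open neighborhoods of $p_1$ and $p_2$. If instead $\pi(p_1) = \pi(p_2)$, then since $A \subseteq \I$ while the $\Q$-points project to rationals, both points must lie in $\Q \times \ka$ and share the same first coordinate $q$; as they are distinct they must sit on different levels, say $p_1 = (q,\al_1)$ and $p_2 = (q,\al_2)$ with $\al_1 \neq \al_2$. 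Here I would simply use $U_1(q,\al_1) \subseteq \Q \times \{\al_1\}$ and $U_1(q,\al_2) \subseteq \Q \times \{\al_2\}$, which are disjoint because they live on different levels.

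The only feature that needs care is that the basic neighborhoods $U_n(a)$ of an irrational point reach into every level $\be < \ka$, so one cannot hope to separate points purely by their level; the projection $\pi$ is exactly what absorbs this difficulty, since such neighborhoods still project into an arbitrarily small interval about $a$. I expect no genuine obstacle beyond this bookkeeping: the \emph{distinct projection} case automatically handles all mixed pairs (a rational point against an irrational point always has distinct projections, as $q \neq a$) as well as all pairs of distinct irrationals, while the \emph{equal projection} case is forced to consist of two same-column rationals on different levels, which the level structure separates trivially.
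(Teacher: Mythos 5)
Your proof is correct, and it reorganizes the argument in a mildly but genuinely different way from the paper. The paper splits into three cases according to the \emph{types} of the two points (both in $A$; both in $\Q\times\ka$, with subcases $q_1\neq q_2$ and $q_1=q_2$; one of each type) and in each case invokes Hausdorffness of $\R$ or disjointness of levels directly. You instead introduce the projection $\pi:Z\to\R$ and prove the single estimate $\pi(U_n(p))\subseteq(\pi(p)-\n,\pi(p)+\n)$, which collapses three of the paper's cases (distinct irrationals, mixed pairs, and rationals with $q_1\neq q_2$) into the one case ``distinct projections,'' leaving only the same-column case $(q,\al_1)$, $(q,\al_2)$ to be handled by the observation $U_n(q,\al)\subseteq\Q\times\{\al\}$. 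The underlying separation mechanism is identical --- small intervals in $\R$ plus disjointness of levels --- so nothing new is proved; what your version buys is economy: the projection lemma is verified once and then every appeal to ``Hausdorffness of $\R$'' in the paper becomes a single inequality $\n<\tfrac12|\pi(p_1)-\pi(p_2)|$, with no need to re-examine how neighborhoods of the two point types interact (in particular, the fact that $U_n(a)$ spreads over all levels $\be<\ka$ never has to be confronted case by case). The paper's version, conversely, is more self-contained at each step and makes explicit which concrete sets are disjoint, which is perhaps friendlier for a reader checking the definitions for the first time. Both proofs are complete and correct.
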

\begin{proof}
\begin{case}[1]
$a_1\neq a_2$, $a_1,a_2\in A$. 
Then they can be separated in $\R$ (which is Hausdorff) by disjoint open intervals $(a_1-\n, a_1+\n)\cap (a_2-\n,a_2+\n)$.
Then $U_n(a_1)\cap U_n(a_2)=\emptyset$.
\end{case}
\begin{case}[2]
$(q_1,\al_1)\neq(q_2,\al_2)$. 

If $q_1\neq q_2$, there exists $n\in\N$ such that $(q_1-\n,q_1+\n)\cap(q_2-\n,q_2+\n)=\emptyset$ in $\R$.
Hence $U_n(q_1,\al_1)\cap U_n(q_2,\al_2)=\emptyset$. 

If $q_1=q_2$ then, without loss of generality, $\al_1\lneqq \al_2$. 
Then for every $n$, $U_n(q_1,\al_1)\cap U_n(q_2,\al_2)=\emptyset$.
\end{case}
\begin{case}[3]
$q\neq a$.
Then again there is an $n$ with $(q-\n,q+\n)\cap(a-\n,a+\n)=\emptyset$ and
$U_n(a)\cap U_n(q,\al)=\emptyset$ for every $\al<\ka$.
\end{case}
\end{proof}
\begin{nt}
For each $\al<\ka$ the subspace $\Q\times\{\al\}$ is open in $Z$ (because 
every point in $\Q\times\{\al\}$ has a neighborhood that consists only of elements of $\Q\times\{\al\}$,
and also $\Q\times\{\al\}$ is homeomorphic to $\Q$, as a subspace of $\R$).
\end{nt}
\begin{cl}
$Z$ is weakly Lindel\"of.
\end{cl}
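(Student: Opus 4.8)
The plan is to exploit the countability of $A$ together with the crucial feature that every basic neighborhood $U_n(a)$ of an irrational point $a\in A$ reaches into \emph{all} $\ka$ levels at once. First I would fix an arbitrary open cover $\U$ of $Z$ and, for each $a\in A$, choose a single member $W_a\in\U$ with $a\in W_a$. Since $A$ is countable, $\V=\{W_a:a\in A\}$ is a countable subfamily of $\U$, and the whole task reduces to proving $\bar{\bigcup\V}=Z$.

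Because each $W_a$ is open and contains $a$, I would pick $n_a\in\N$ with $U_{n_a}(a)\subseteq W_a$, so it suffices to show $\bar{\bigcup_{a\in A}U_{n_a}(a)}=Z$. Set $G=\bigcup_{a\in A}\left(a-\frac{1}{n_a},\,a+\frac{1}{n_a}\right)\subseteq\R$. Then $G$ is open and contains the set $A$, which is dense in $\R$; hence $G$ is open and dense in $\R$, and consequently $G\cap\Q$ is dense in $\R$ (any nonempty open set meets the dense open $G$ in a nonempty open set, which then meets $\Q$).

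The closure computation then splits into two cases. Every point of $A$ already lies in $\bigcup\V$, so it is trivially in the closure. For a point $(q,\al)\in\Q\times\ka$, its basic neighborhoods $U_m(q,\al)$ live entirely inside the single level $\al$ and consist of the rationals lying in $\left(q-\frac{1}{m},\,q+\frac{1}{m}\right)$. The key observation is that each $U_{n_a}(a)$ contributes, at \emph{every} level $\al<\ka$, precisely those rationals $r$ with $|r-a|<\frac{1}{n_a}$; therefore the rationals of level $\al$ that are covered by $\bigcup_{a\in A}U_{n_a}(a)$ are exactly the dense set $(G\cap\Q)\times\{\al\}$. Since $G\cap\Q$ is dense in $\R$, the interval determining $U_m(q,\al)$ must contain such a covered rational, so $U_m(q,\al)$ meets $\bigcup\V$, whence $(q,\al)\in\bar{\bigcup\V}$. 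This establishes $\bar{\bigcup\V}=Z$.

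The main obstacle is the density step in the second case: one must see that the chosen neighborhoods of the irrational points, although indexed only by the countable set $A$, automatically saturate every one of the uncountably many levels with a dense set of rationals. This is exactly where both the denseness of $A$ in $\R$ and the ``vertical reach'' of the sets $U_n(a)$ into all $\ka$ levels are used simultaneously; once it is in hand, weak \Lind ness follows at once, and — worth emphasizing in view of the intended example — with no restriction whatsoever on the size of $\ka$.
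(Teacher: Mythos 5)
Your proof is correct and follows essentially the same route as the paper: both select one cover member $W_a$ (resp.\ $U_a$) for each point of the countable dense set $A$ and then verify that the resulting countable union is dense in $Z$, using exactly the two ingredients of density of $A$ in $\R$ and the fact that each $U_n(a)$ meets every level $\al<\ka$. The only difference is cosmetic: the paper proves the density claim via an explicit triangle-inequality choice of a rational $r$ (packaged as the lemma that any open $G\supseteq A$ is dense in $Z$), while you project to $\R$ and invoke density of the open set $G\cap\Q$ — the same argument in different clothing.
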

\begin{proof}
We will first show that if $G\subset Z$ is open and $A\subset G$, then $G$ is dense in $Z$.

So, we have to show that $G$ intersects any  nonempty basic open set in $Z$.
Since $A\subset G$, $G$ will intersect any neighborhood of points in $A$, so we only have to prove that
if $q\in\Q$, $\al<\ka$ and $n\in\N$ then $U_n(q,\al)\cap G\neq\emptyset$.

Recall that $A$ is dense in $R$. 
Hence for every nonempty interval $(q-\n,q+\n)\subset \R$, there exists $a\in A$ such that $a\in(q-\n,q+\n)$, so
$|q-a|<\n$.

Since $G$ is open and $a\in A\subset G$, there is $m\in\N$ with $U_m(a)\subseteq G$.

Let $r\in\Q$ be such that $|a-r|<\min \{\frac{1}{m},\frac{1}{n}-|q-a|\}$. 
Then $(r,\al)\in G$ and 
\begin{displaymath}
|q-r|\leq |q-a|+|a-r|<|q-a|+\frac{1}{n}-|q-a|=\frac{1}{n},
\end{displaymath}
so $(r,\al)\in U_n (q,\al)$.

Hence $(r,\al)\in G$ and $(r,\al)\in U_n(q,\al)$.
Hence $(r,\al)\in G\cap U_n(q,\al)$, and so $G$ is dense in $Z$. 

Then $Z$ is weakly Lindel\"of. 
Indeed let $\U$ be an arbitrary open cover of $Z$. 
For every $a\in A$ let $U_a \in\U$ such that $a\in U_a$.
Let $\V=\{U_a:U_a\in\U,a\in A\}$, then $\V$ is countable and $G=\bigcup\V$ is open; hence 
$\bar{\bigcup\V}=\bar{G}=Z$.
\end{proof}

\begin{cl} $Z$ is not Lindel\"of.
\end{cl}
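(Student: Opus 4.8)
The plan is to exhibit an uncountable closed discrete subspace of $Z$ and then invoke Proposition \ref{closed discrete subsets of HL}, which says that a closed discrete subset of a \Lin space must be countable. The key structural feature to exploit is that $Z$ consists of $\ka$-many essentially ``unlinked'' copies of $\Q$: the basic neighborhoods $U_n(q,\al)$ never leave their own level $\al$. Consequently a transversal that selects one rational point from each level is a natural candidate for such a subspace.

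Concretely, I would fix a single rational, say $q_0=0\in\Q$, and set $D=\{(q_0,\al):\al<\ka\}\subseteq\Q\times\ka\subseteq Z$; clearly $|D|=\ka$ is uncountable. To see that $D$ is discrete, note that each basic neighborhood $U_n(q_0,\al)$ lies inside the single level $\Q\times\{\al\}$, so it meets $D$ in the point $(q_0,\al)$ only; hence every point of $D$ is isolated in $D$.

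The main work is verifying that $D$ is closed, i.e.\ that $Z\setminus D$ is open, and this splits into two cases. For a rational point $(q,\be)$ with $q\neq q_0$, the level-confined neighborhood $U_n(q,\be)$ avoids $(q_0,\be)$, and every other point of $D$, as soon as $\n<|q-q_0|$, so such points are not limit points of $D$. For an irrational point $a\in A$, the neighborhood $U_n(a)$ does pull in a vertical strip $\{(q,\al):\al<\ka,\ |q-a|<\n\}$ meeting every level, and this is the only place where closedness could fail; but since $a$ is irrational while $q_0$ is rational we have $|a-q_0|>0$, so choosing $n$ with $\n<|a-q_0|$ yields $U_n(a)\cap D=\emptyset$. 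Thus $D$ is an uncountable closed discrete subspace.

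Finally, applying Proposition \ref{closed discrete subsets of HL} in contrapositive form, the existence of the uncountable closed discrete set $D$ forces $Z$ to fail the \Lin property. The one subtlety to watch is the closedness check at the points of $A$: their neighborhoods are genuinely large, spanning all $\ka$ levels, so the whole argument hinges on the rational/irrational separation that keeps a fixed rational transversal at positive distance from $A$.
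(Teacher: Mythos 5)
Your proposal is correct and follows the paper's own argument exactly: the paper likewise fixes a rational $q$ and shows $\{q\}\times\ka$ is an uncountable closed discrete subset of $Z$, then applies Proposition \ref{closed discrete subsets of HL}. Your write-up merely supplies the case analysis for closedness (rational points off the transversal, and points of $A$) that the paper leaves implicit, which is a welcome level of detail rather than a deviation.
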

\begin{proof}
For every $q\in\Q$ the set $\{q\}\times \ka$ is uncountable, closed and discrete in $Z$:
$\{q\}\times\ka$ is discrete because for any $n\in\N$ and $\al<\ka$ we have $U_n(q,\al)\cap(\{q\}\times \ka)=(q,\al)$.

\includegraphics[width=15cm]{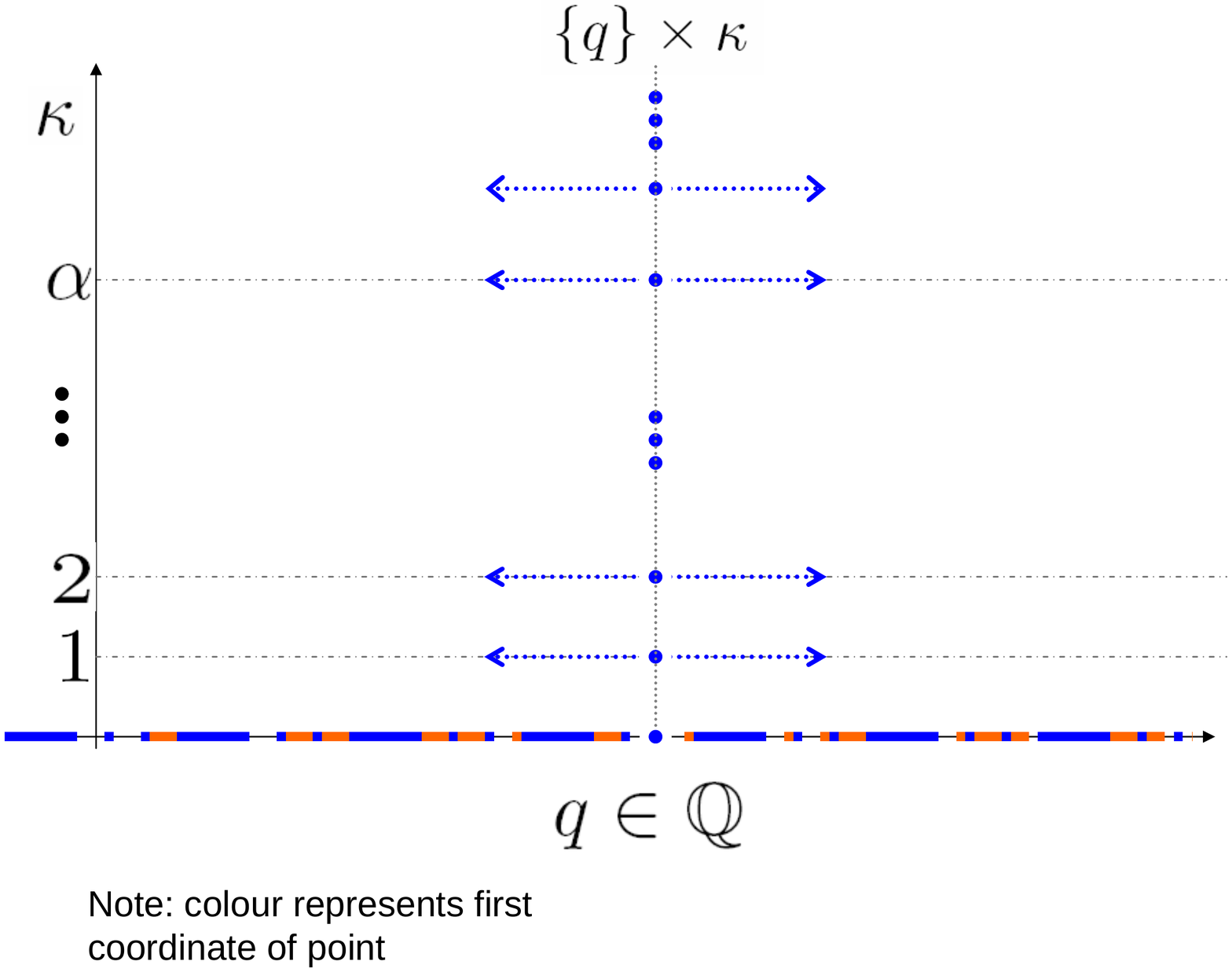}

$\{q\}\times \ka$ is closed because for each $x\in Z\setminus(\{q\}\times \ka)$ there is a neighborhood which does not intersect $\{q\}\times \ka$. 
Therefore, by Lemma \ref{closed discrete subsets of HL}
we have that $Z$ is not Lindel\"of. 
\end{proof}

\begin{cl}
$Z$ is not CCC.
\end{cl}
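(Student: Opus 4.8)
The plan is to exhibit an uncountable family of pairwise disjoint nonempty open subsets of $Z$ directly; by Definition \ref{CCC} this immediately shows that $Z$ is not CCC.

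The natural candidates are the horizontal copies of $\Q$ already singled out in the Note preceding this claim: for each $\al<\ka$, set $G_\al=\Q\times\{\al\}$. First I would recall that each $G_\al$ is open, since for any $(q,\al)\in G_\al$ the basic neighborhood $U_n(q,\al)=\{(r,\al):r\in\Q,\ |r-q|<\n\}$ is entirely contained in $G_\al$; moreover each $G_\al$ is nonempty because $\Q\neq\emptyset$. Next I would observe that these sets are pairwise disjoint: any point of $Z$ lying in $\Q\times\ka$ has a well-defined second coordinate, so $G_{\al_1}\cap G_{\al_2}=\emptyset$ whenever $\al_1\neq\al_2$.

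Finally, since $\ka$ is an uncountable cardinal, $\{G_\al:\al<\ka\}$ is an uncountable family of pairwise disjoint nonempty open sets, which directly contradicts the countable chain condition. Hence $Z$ is not CCC. There is essentially no obstacle here: the only point requiring a word of justification is the openness of each $G_\al$, and this is immediate from the shape of the basic neighborhoods (it was in fact already recorded in the Note). It is worth emphasizing, in view of the surrounding discussion, that this gives a space which is weakly \Lin (by the previous claim) yet fails CCC, so the converse of Theorem \ref{CCCwL} does not hold.
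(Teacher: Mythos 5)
Your proof is correct and is essentially the same argument as the paper's: the paper fixes $q\in\Q$ and uses the uncountable disjoint family of basic neighborhoods $\{U_n(q,\al):\al<\ka\}$, which are just subsets of your levels $G_\al=\Q\times\{\al\}$, with disjointness coming from the distinct second coordinates in both cases. Your version works equally well, relying on the openness of $\Q\times\{\al\}$ already recorded in the Note.
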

\begin{proof}
Let us fix $q\in\Q$.
Then the family $\U=\{U_n(q,\al):\al<\kappa\}$ is an uncountable family of disjoint  open sets. 
\end{proof}

\begin{cl}$Z$ is not quasi-Lindel\"of.
\end{cl}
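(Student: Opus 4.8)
The plan is to disprove the quasi-\Lind property directly from Definition \ref{qL}: I need only produce a single closed subset of $Z$ that is not weakly \Lind. The obvious candidate is the set $F=\{q\}\times\ka$ for an arbitrary fixed $q\in\Q$, which has already appeared in the proof that $Z$ is not \Lind. There we verified that $F$ is uncountable, closed, and discrete in $Z$; in particular $U_n(q,\al)\cap F=\{(q,\al)\}$, so the subspace topology on $F$ is precisely the discrete topology on a set of cardinality $\ka>\ao$.

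First I would reduce the claim to the assertion that an uncountable discrete space cannot be weakly \Lind. To see this, I would take the cover of $F$ by its singletons $\{(q,\al)\}$, $\al<\ka$. For any countable subfamily $\V$ of this cover, $\bigcup\V$ is a countable subset of $F$, and since in a discrete space every subset is closed, $\bar{\bigcup\V}=\bigcup\V$, which is a proper (countable) subset of the uncountable set $F$. Hence no countable subfamily has dense union, and $F$ is not weakly \Lind.

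Combining these two observations, $Z$ possesses a closed subspace that fails to be weakly \Lind, so by Definition \ref{qL} the space $Z$ is not quasi-\Lind. There is essentially no hard obstacle here; the only point that deserves explicit checking is that the closure operation on $F$ is trivial, which is exactly where the discreteness of $F$ (inherited from the neighborhoods $U_n(q,\al)$) does all the work. A minor stylistic choice is whether to invoke $F=\{q\}\times\ka$ verbatim from the earlier non-\Lind argument or to restate its three properties; I would simply cite the earlier claim to avoid repetition.
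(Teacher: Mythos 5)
Your argument establishes that the subspace $F=\{q\}\times\ka$ is not weakly \Lin \emph{in its own subspace topology}, which does match the literal wording of Definition \ref{qL}. However, the notion of quasi-\Lind ness the paper actually works with --- in its proof of this very claim, in the proposition that normal plus weakly \Lin implies quasi-\Lind, and in the theorem (labelled [PS]) that every CCC space is quasi-\Lind --- is Arhangelskii's \emph{relative} one: for every closed $F\subseteq Z$ and every cover of $F$ by sets \emph{open in $Z$}, some countable subfamily $\V$ satisfies $F\subseteq\bar{\bigcup\V}$ with the closure taken \emph{in $Z$}. Under that reading your proof breaks at two points. First, your cover is inadmissible: the singletons $\{(q,\al)\}$ are not open in $Z$, since every basic neighborhood $U_n(q,\al)$ contains infinitely many points $(r,\al)$, $r\in\Q$. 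Second, even after replacing singletons by such neighborhoods, the closure must be computed in $Z$, where it can be vastly larger than the trace on the discrete set $F$, so the identity $\bar{\bigcup\V}=\bigcup\V$ that ``does all the work'' in your argument is no longer available.

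Moreover, the reduction you rely on --- ``an uncountable closed discrete subspace implies not quasi-\Lind'' --- is genuinely false for the relative notion, and the paper itself supplies the counterexample: $\Sorg\times\Sorg$ is CCC (Example \ref{inverse}), hence quasi-\Lin by the theorem [PS], yet it contains the uncountable closed discrete anti-diagonal of Example \ref{SorgplanenotL}. (This also shows the two readings of Definition \ref{qL} really differ, and that theorem [PS] would be false under yours, which is why the relative reading must be the intended one.) What is missing from your proof is exactly the content of the paper's: cover $F$ by the genuine open sets $U_n(q,\al)$, $\al<\ka$, and verify that closures in $Z$ do not spread across levels --- $\bar{U_n(q,\al)}$ adds only points $(r,\al)$ of the same level $\al$ together with points of $A$, so it meets $F$ precisely in $\{(q,\al)\}$; consequently the closure of any countable union meets $F$ in only countably many points, which cannot exhaust $\ka>\ao$. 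Discreteness of $F$ is the right intuition, but the real work is this closure computation in $Z$, resting on the fact that the only points of $Z$ whose neighborhoods meet several levels are the points of $A$, and $A\cap F=\emptyset$; your argument never touches it.
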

\begin{proof}
Again, $\{q\}\times\ka$ is closed and discrete and from the cover $\g=\{(x-\n,x+\n)\times\al:\al<\ka\}$, 
no countable subfamily can be chosen, the closure of whose union covers  $\{q\}\times\ka$.
This is because 
$\bar{\bigcup\g}$ consists of 
$\{[q-\n,q+\n]\times\{\al\}:\al<\ka\}\cup(A\cap([q-\n,q+\n]\times\{0\}))$
and $([q-\n,q+\n]\times\{\al\})\cap([q-\n,q+\n]\times\{\be\})=\emptyset$ if $\al\neq\be$. 
Hence if we remove $[q-\n,q+\n]\times\{\al\}$  for some $\{\al\}$, the point $(q,\al)$ will remain uncovered. 
\end{proof}

\subsubsection{The Weakly Lindel\"of Property and the Main Topological Operations}

Let us prove the following four results stated in \cite{SZ}.

\begin{defn}
A subset $A$ of a topological space $X$ is called \emph{regularly closed} if $A=\bar{\Int(A)}$
\end{defn}

\begin{propn}\label{regclwLwL}Every regularly closed subset of a weakly Lindel\"of space $X$ is weakly Lindel\"of.
\end{propn}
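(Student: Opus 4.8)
The plan is to show that if $A = \bar{\Int(A)}$ is regularly closed in a weakly Lindel\"of space $X$, then any open cover of $A$ (in the subspace topology) admits a countable subfamily whose union is dense in $A$. The key device is to pass from a cover of $A$ to a cover of all of $X$ by adjoining the open set $X \setminus A$, apply weak Lindel\"ofness of $X$, and then argue that the part of $A$ possibly missed lies entirely in the boundary $A \setminus \Int(A)$, which is irrelevant for density in $A$ because $\Int(A)$ is dense in $A$ by the regular-closedness hypothesis.

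First I would let $\U = \{U_\al : \al \in I\}$ be an open cover of $A$ by sets open in $X$ (which we may do, since each subspace-open set extends to an open set of $X$ whose trace on $A$ is the given set). Consider the family $\W = \U \cup \{X \setminus A\}$. Since $A$ is closed, $X \setminus A$ is open, so $\W$ is an open cover of $X$. By weak Lindel\"ofness of $X$, there is a countable subfamily $\W'$ with $\bar{\bigcup \W'} = X$. Let $\V = \W' \cap \U$, a countable subfamily of $\U$; I would then set $G = \bigcup_{V \in \V} V$ and aim to show $\bar{G} \supseteq A$, where the closure here is taken in $A$.

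The crucial step is the density argument. Taking closures in $X$, we have $X = \bar{G \cup (X \setminus A)} = \bar{G} \cup \bar{X \setminus A}$. Intersecting with the open set $\Int(A)$, I would observe that $\Int(A) \cap \bar{X \setminus A} = \emptyset$: any point of $\Int(A)$ has a neighborhood contained in $A$, hence disjoint from $X \setminus A$, so it cannot lie in the closure of $X \setminus A$. Therefore $\Int(A) \subseteq \bar{G}$, and taking closures again gives $\bar{\Int(A)} \subseteq \bar{G}$. By the regular-closedness hypothesis $A = \bar{\Int(A)}$, so $A \subseteq \bar{G}$. Since $G \subseteq A$, the closure of $G$ within the subspace $A$ is $\bar{G} \cap A = A$, which is exactly the statement that $\V$ witnesses weak Lindel\"ofness of $A$.

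The main obstacle, and the step deserving the most care, is the density argument rather than the cover-building: one must be careful that closures are taken consistently (in $X$ versus in $A$) and must invoke the regular-closedness hypothesis at precisely the right moment, since it is only the equality $A = \bar{\Int(A)}$ that rescues the boundary points of $A$ that the subfamily $\V$ may genuinely fail to cover or have in its closure. For a merely closed (not regularly closed) set the argument breaks down exactly here, which is consistent with the hypothesis of the proposition being strictly needed.
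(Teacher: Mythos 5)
Your proof is correct and takes essentially the same approach as the paper's: adjoin the open set $X\setminus A$ to the cover, apply weak Lindel\"ofness of $X$, observe that $\Int(A)\cap\bar{X\setminus A}=\emptyset$, and invoke $A=\bar{\Int(A)}$ only at the final step (indeed, your placement of the regular-closedness hypothesis is the accurate one, since the disjointness of $\Int(A)$ from $\bar{X\setminus A}$ holds for an arbitrary set). The one slip is the assertion $G\subseteq A$ (your sets $V\in\V$ are open in $X$, not subsets of $A$), but it is harmless: since $\Int(A)$ is open and contained in $A$, the same density argument gives $\Int(A)\subseteq\bar{G\cap A}$, hence $A=\bar{\Int(A)}\subseteq\bar{G\cap A}$, which is exactly what weak Lindel\"ofness of the subspace $A$ requires.
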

\begin{proof}
Let $F\subset X$ be regularly closed, and $\U$ be an open cover for $F$.
Then $\U\cup(X\setminus F)$ is an open cover for $X$. 
Hence there exists $\U'\subset \U$ with 
\begin{displaymath}
\bar{\U'\cup(X\setminus F)}=X.
\end{displaymath}
Now, $\Int(F)\cap\bar{(X\setminus F)}=\emptyset$, since $F$ is regular closed. Hence, $\Int(F)\subseteq\bar{\bigcup\U'}$.
Therefore $F=\bar{\Int(F)}\subseteq \bar{\bigcup\U'}$.
\end{proof}

\begin{propn} Every clopen subset of a weakly \Lin space is weakly \Lind.
\end{propn}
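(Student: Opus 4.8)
The plan is to reduce this statement to the immediately preceding Proposition \ref{regclwLwL}, which asserts that every regularly closed subset of a weakly \Lin space is weakly \Lind. It therefore suffices to observe that any clopen set is in fact regularly closed, after which the conclusion is automatic and no fresh covering argument is required.

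First I would unpack the relevant definitions. A subset $F\subset X$ is clopen exactly when it is simultaneously open and closed, while it is regularly closed when $F=\bar{\Int(F)}$. Because $F$ is open we have $\Int(F)=F$, and because $F$ is closed we have $\bar{F}=F$. Combining these two identities yields $\bar{\Int(F)}=\bar{F}=F$, so $F$ meets the definition of a regularly closed set precisely on the strength of the clopen hypothesis.

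Having established that $F$ is regularly closed, I would simply invoke Proposition \ref{regclwLwL} to conclude that $F$ is weakly \Lind. The only point requiring any care — more a matter of bookkeeping than a genuine obstacle — is keeping track of which half of the clopen hypothesis delivers each of the identities $\Int(F)=F$ and $\bar{F}=F$; once that is noted, the argument inherits all of its substance from the regularly closed case and there is nothing further to prove.
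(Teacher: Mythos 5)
Your proof is correct and follows exactly the same route as the paper: the paper's proof also consists of observing that clopen sets are regularly closed and then citing Proposition \ref{regclwLwL}. Your only addition is spelling out the identities $\Int(F)=F$ and $\bar{F}=F$, which the paper leaves implicit.
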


\begin{proof}
This follows from Proposition \ref{regclwLwL}, since clopen subsets are regularly closed sets. 
\end{proof}

As we have already pointed out in Example \ref{BGW}, not every closed subset of a weakly \Lin space is weakly \Lind, so the two results given above are the `best possible' with respect to the inheritance of the weakly \Lin property. 

\begin{propn} \label{ctswL} The continuous image of a weakly Lindel\"of space is weakly Lindel\"of. 
\end{propn}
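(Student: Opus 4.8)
The plan is to mimic the standard proof that continuous images of Lindelöf spaces are Lindelöf, but replacing "covers" with "has dense union." Let $f:X\rightarrow Y$ be a continuous surjection (or at least consider $f(X)$, which I may assume to be all of $Y$ by restricting the codomain; I would note this at the start), with $X$ weakly Lindelöf, and let $\U$ be an arbitrary open cover of $Y$. First I would pull back the cover: set $\V=\{f^{-1}(U):U\in\U\}$. Since $f$ is continuous each $f^{-1}(U)$ is open, and because $\U$ covers $Y$ the family $\V$ covers $X$. Now I apply the weak Lindelöf property of $X$ to $\V$ to extract a countable subfamily $\V'=\{f^{-1}(U_n):n\in\N\}$ with $\bar{\bigcup_{n\in\N}f^{-1}(U_n)}=X$.

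The core step is to transport this density statement through $f$. I would set $\U'=\{U_n:n\in\N\}$, a countable subfamily of $\U$, and claim that $\bar{\bigcup_{n\in\N}U_n}=Y$. To see this, write $W=\bigcup_{n\in\N}f^{-1}(U_n)$, so that $\bar{W}=X$. Applying Proposition \ref{ctspropn} (the characterization of continuity $f(\bar{A})\subseteq\bar{f(A)}$) with $A=W$ gives
\begin{displaymath}
Y=f(X)=f(\bar{W})\subseteq\bar{f(W)}.
\end{displaymath}
Since $f(W)=f\left(\bigcup_{n}f^{-1}(U_n)\right)=\bigcup_n f(f^{-1}(U_n))\subseteq\bigcup_n U_n$, we get $\bar{f(W)}\subseteq\bar{\bigcup_n U_n}$, and combining the two inclusions yields $Y\subseteq\bar{\bigcup_n U_n}\subseteq Y$. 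Hence $\U'$ is the required countable subfamily with dense union, and $Y$ is weakly Lindelöf.

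The only subtlety—and the step I would treat most carefully—is the surjectivity assumption. If $f$ is not onto, then $f(X)=\overline{f(X)}$ need not hold in $Y$, and the statement "$Y$ is weakly Lindelöf" should be read as "$f(X)$ is weakly Lindelöf," exactly as in the Lindelöf analogue (Proposition \ref{ctsimg}). So at the outset I would either assume $f$ surjective (as is standard for such preservation results) or phrase the conclusion for the image $f(X)$ with its subspace topology; the inclusion $f(f^{-1}(U))\subseteq U$ used above holds in either reading. Everything else is routine: the pullback preserves openness by continuity, the weak Lindelöf property supplies the countable subfamily, and Proposition \ref{ctspropn} is precisely the tool that pushes closures forward, which is exactly what "weakly Lindelöf" (density of a countable union) requires.
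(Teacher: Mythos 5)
Your proof is correct, and it shares the paper's skeleton --- pull back the cover via $f^{-1}$, extract a countable subfamily using weak Lindel\"ofness of $X$, and push the density statement forward --- but the transport step at the end is done with a different tool. The paper argues pointwise: given any nonempty open $W\subseteq Y$, the open set $f^{-1}(W)$ meets $\bigcup\V'$ because that union is dense in $X$, and taking images shows $W$ meets $\bigcup\U'$, so $\bigcup\U'$ is dense by the ``meets every nonempty open set'' characterization of density. You instead invoke Proposition \ref{ctspropn}, $f(\bar{A})\subseteq\bar{f(A)}$, with $A=\bigcup_n f^{-1}(U_n)$, which packages that same pointwise reasoning into a single cited inclusion; the two arguments are morally identical, since Proposition \ref{ctspropn} is itself proved by the pointwise argument. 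Your version buys two small advantages. First, your indexed choice $\U'=\{U_n:n\in\N\}$ is countable by construction, whereas the paper's definition $\U'=\{U\in\U:f^{-1}(U)\in\V'\}$ could in principle be uncountable when distinct members of $\U$ share the same preimage (harmless, since one representative per preimage suffices, but it is a gap in bookkeeping that you avoid). Second, you make explicit the surjectivity convention --- that ``the continuous image'' means $Y=f(X)$, or else one restricts the codomain --- which the paper leaves implicit and which is genuinely needed, since otherwise $f^{-1}(W)$ may be empty in the paper's argument, and $f(\bar{W})\supseteq Y$ fails in yours.
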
 

\begin{proof}
Let $f:X\rightarrow Y$ be continuous, $X$ be weakly Lindel\"of, and $\U$ be an open cover of $Y$. 
Then $\V=\{f^{-1}(U):U\in\U\}$ is an open cover for $X$. 
Hence there exists a countable subfamily $\V'\subset\V$ with $\bar{\bigcup\V'}=X$. 
Then $\U'=\{U:f^{-1}(U)\in\V'\}\subset\U$ is the required subcollection.

Indeed, let $W\subset Y$ be open. 
Then $f^{-1}(W)$ is open, hence there is $V\in\V'$ with $f^{-1}(W)\cap V\neq\emptyset$, say $x\in f^{-1}(W)\cap V$.
Then $f(x)\in f(f^{-1}(W)\cap V)=W\cap V$, so $\bigcup\U'=Y$, as required. 
\end{proof}

\begin{cor} A quotient space of a weakly Lindel\"of spaces is weakly Lindel\"of.
\end{cor}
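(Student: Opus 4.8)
The plan is to derive this immediately from Proposition \ref{ctswL}, since the weakly \Lin property has already been shown to be preserved by continuous surjections. The only observation needed is that a quotient space is, by construction, a continuous image of the space from which it is formed; this is exactly the point flagged parenthetically in the preceding proposition about quotients of almost \Lin spaces.

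More precisely, I would recall that a quotient space $Y$ of $X$ comes equipped with a surjective quotient map $q:X\ra Y$, and that the quotient topology on $Y$ is defined precisely so that $q$ is continuous: a set in $Y$ is open if and only if its preimage under $q$ is open in $X$. In particular $q$ is a continuous surjection, so $Y=q(X)$ is the continuous image of $X$.

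Since $X$ is weakly \Lin, Proposition \ref{ctswL} applies directly and gives that $q(X)=Y$ is weakly \Lind. There is no genuine obstacle here: the whole content of the corollary is the remark that the quotient map is continuous, which reduces the statement to the previous proposition. One might add, for completeness, that the surjectivity of $q$ ensures $Y$ is the image of all of $X$ (rather than a proper subspace), so that Proposition \ref{ctswL} yields the weak \Lind ness of the entire quotient space and not merely of a subspace of it.
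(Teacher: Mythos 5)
Your proof is correct and is exactly the paper's intended argument: the corollary is stated immediately after Proposition \ref{ctswL} precisely because the quotient map is a continuous surjection, so the quotient space is a continuous image of a weakly \Lin space and the proposition applies directly. Your added remark about surjectivity is a fine (if unneeded) precaution; nothing further is required.
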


\begin{propn} The countable disjoint sum $\oplus_{n \in \N} X_n$ is weakly Lindel\"of iff all spaces $X_n$ are weakly-Lindel\"of.
\end{propn}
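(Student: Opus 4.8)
The statement is an "iff", so I need two directions. This is a disjoint sum $\bigoplus_{n\in\N}X_n$, where each summand is clopen in the sum and carries its subspace topology. The plan is to handle each direction separately, exploiting the fact that clopen subsets interact nicely with the weakly Lindelöf property, as established in the previous propositions.

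For the reverse (harder) direction, suppose every $X_n$ is weakly Lindelöf and let $\U$ be an open cover of $X=\bigoplus_{n\in\N}X_n$. First I would restrict $\U$ to each summand: since each $X_n$ is open in $X$, the family $\U_n=\{U\cap X_n:U\in\U\}$ is an open cover of $X_n$. By weak Lindelöfness of $X_n$, choose a countable $\V_n\subseteq\U_n$ with $\overline{\bigcup\V_n}^{X_n}=X_n$; pull each element of $\V_n$ back to an element of $\U$ and collect these into a countable family $\V=\bigcup_{n\in\N}\V_n\subseteq\U$, which is countable as a countable union of countable families. The key step is then to check that $\overline{\bigcup\V}=X$. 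The crucial observation is that closure in a disjoint sum is computed summand-by-summand: because each $X_n$ is clopen, for any set $S\subseteq X$ one has $\bar S\cap X_n=\overline{S\cap X_n}^{X_n}$. Applying this with $S=\bigcup\V$ gives $\bar S\cap X_n\supseteq\overline{\bigcup\V_n}^{X_n}=X_n$ for every $n$, so $\bar S=X$, as required.

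For the forward direction, suppose $X$ is weakly Lindelöf and fix an arbitrary $n$. Since $X_n$ is clopen in $X$, I can invoke the already-proved fact that every clopen subset of a weakly Lindelöf space is weakly Lindelöf (the preceding proposition). Hence $X_n$ is weakly Lindelöf, and this holds for every $n$. This direction is essentially immediate once the clopen-subset result is cited.

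The main obstacle is making rigorous the claim that closures in a disjoint sum split across the summands — i.e. that $\overline{\bigcup\V}$ meets each $X_n$ in exactly $\overline{\bigcup\V_n}^{X_n}$. This rests on the defining feature of the disjoint-sum topology, namely that each $X_n$ is clopen: a point $x\in X_n$ lies in the closure of $\bigcup\V$ iff every neighborhood of $x$ meets $\bigcup\V$, and since $X_n$ itself is an open neighborhood contained in $X_n$, only the part of $\bigcup\V$ inside $X_n$ (that is, $\bigcup\V_n$) is relevant, and neighborhoods of $x$ within $X_n$ form a neighborhood base. Once this localisation of closure is established, both directions follow cleanly.
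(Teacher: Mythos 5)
Your proof is correct and takes essentially the same approach as the paper, whose entire proof is the one-line observation that in a disjoint sum the closure of a union is computed summand-by-summand --- precisely the localisation-of-closure fact at the heart of your argument. Your appeal to the clopen-subset proposition for the forward direction is also exactly what the paper's immediately preceding results are set up to provide; you have simply written out the details the paper leaves implicit.
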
 
\begin{proof}
In disjoint sums, $\bar{\bigcup_{U\in\U}U}=\bigcup_{n\in\N} \bar{\U\times X_n}$.
\end{proof}

Now, we shall present an example of Hajnal and Juhasz  showing that the product of even two weakly \Lin spaces need not be weakly \Lind.

\begin{xmpl} \label{HJ} \cite{HJ}A Hausdorff not weakly Lindel\"of space that is a product of two Lindel\"of spaces. 
\end{xmpl} 

\begin{rmk}
 The example is based on some properties of the topology on linearly ordered spaces. So, here we give the necessary background.
\end{rmk}

\begin{defn} The order $``<"$ is said to be \emph{dense} iff whenever $x\lneqq y$ there is $z\in X$ such that $x\lneqq z \lneqq y$. 
\end{defn}

\begin{defn} $``<"$ is said to be \emph{complete} if every non-empty subset of $X$ that has an upper bound has a least upper bound. We shall denote it by $\sup X$. 
\end{defn}

\begin{defn} If $\left<X,<\right>$ is a linearly ordered set, we can define a topology \emph{$\tau(<)$} with the base consisting of all open (with respect to $``<"$) intervals in $X$. 
\end{defn}
Let $(X,<)$ be any linearly ordered set. Let $X^{+}$ be $X$ with the topology generated by 
the basis consisting of half-open intervals of the form $[x,y)$ and 
 $\Xm$ be $X$ with the topology generated by the basis consisting of half-open intervals of the form $(x,y]$.

The underlying set for Example \ref{HJ} is the well-known Lexicographic Square (example 48, page 73 of \cite{SS}).
Here, we provide details and proofs for properties which have been only stated in \cite{SS}.

\begin{xmpl}[The Lexicographic Square] Let $X=[0,1]\times[0,1]\subset\R^2$. 
Define an order $``\prec"$ on $X$ in the following way:

$(a,b)\prec(c,d)$ iff $a<c$ or ($a=c$ and $b<d$), where $``<"$ is the usual order topology on $X$.

Let $\tau(\prec)$ be the corresponding order topology on $X$. 
\end{xmpl}

\begin{cl} $\left(X,\tau(\prec)\right)$ is first countable.
\end{cl}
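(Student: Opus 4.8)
The plan is to exhibit, at every point, a countable local base built from order-intervals whose endpoints move toward the point in steps of size $\n$. The guiding structural fact is that in the lexicographic order two points with distinct first coordinates are compared by those first coordinates alone, while two points sharing a first coordinate are compared by their second coordinates; equivalently, $X$ is the ordered sum of the vertical fibres $F_a=\{a\}\times[0,1]$, each placed in the position of $a\in[0,1]$. The global minimum is $(0,0)$ and the global maximum is $(1,1)$, and I would treat these as degenerate cases of the argument below. In each case the only thing to check is the local-base conditions (LB1)--(LB2) of Proposition \ref{LB}, so the work is entirely in producing the right countable family and verifying it is cofinal in the neighbourhood filter.

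First I would dispose of the genuinely interior-of-fibre points, i.e.\ $(a,b)$ with $0<b<1$. Here I claim $\V_{(a,b)}=\{\,((a,b-\n),(a,b+\n)) : n\in\N,\ b-\n>0,\ b+\n<1\,\}$ is a countable local base. Computing the order-interval with both endpoints in $F_a$ gives exactly $\{a\}\times(b-\n,b+\n)$, an open neighbourhood of $(a,b)$; and given any basic interval $(P,Q)\ni(a,b)$ with $P=(c,d)$, $Q=(e,f)$, I would check that for large $n$ one has $P\prec(a,b-\n)$ and $(a,b+\n)\prec Q$, whence $((a,b-\n),(a,b+\n))\subseteq(P,Q)$. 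The key simplification is that whenever $c<a$ the within-fibre endpoint $(a,b-\n)$ dominates $P$ automatically, and when $c=a$ one just takes $\n<b-d$. This is the routine case.

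The substance of the proof is the boundary of each fibre, where the point has no within-fibre neighbour on one side and must be approached across fibres. Consider $(a,0)$ with $a>0$: below it lie exactly the points with strictly smaller first coordinate, and there is \emph{no} immediate predecessor, since $\{c:c<a\}$ has no greatest element. Nevertheless I would take the intervals $((a-\n,0),(a,\n))$ (for $n$ with $a-\n\ge 0$) as a local base; the crucial point is that although no single fibre is adjacent to $F_a$ from below, the countably many cut-points $a-\n$ are coinitial in $\{c:c<a\}$, so for any basic $(P,Q)\ni(a,0)$ with $P=(c,d)$, $c<a$, some $a-\n$ exceeds $c$ and the chosen interval sits inside $(P,Q)$. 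The symmetric point $(a,1)$ with $a<1$ is handled by $((a,1-\n),(a+\n,0))$, using that the points $a+\n$ are cofinal from above in $\{c:c>a\}$. The four extreme corners are limiting instances: at $(0,0)$ and $(1,1)$ one side disappears and the half-open order-intervals $[(0,0),(0,\n))$ and $((1,1-\n),(1,1)]$ serve, while $(0,1)$ and $(1,0)$ are covered by the $b=1$ and $b=0$ recipes with $a=0$ and $a=1$ respectively.

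In every case the verification that the proposed family is a local base reduces to (LB1)--(LB2) together with the cofinality/coinitiality of the sequences $\{a\pm\n\}$. Consequently the main obstacle is conceptual rather than computational: one must recognize that the absence of immediate successors or predecessors across fibres is harmless because $[0,1]$ has countable cofinality and coinitiality at each of its points, which is precisely the first countability of the base interval. Once this observation is isolated, each of the cases above is a short direct check, and first countability follows.
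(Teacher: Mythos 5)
Your proof is correct and follows essentially the same route as the paper's: exhibit an explicit countable local base of order-intervals shrinking in steps of $\frac{1}{n}$ at each point, splitting into the interior-of-fibre case ($0<b<1$) and the top/bottom-of-fibre cases. The only differences are cosmetic --- your intervals at $(a,0)$ and $(a,1)$ cut the neighbouring fibre at height $0$ rather than at height $\frac{1}{n}$ as the paper does, and you treat the corner points $(0,0)$ and $(1,1)$ explicitly, which the paper leaves implicit.
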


\begin{proof}
Let us note that if $(x,y)\in X\setminus \left(\left( [0,1]\times\{0\} \right) \cup \left( [0,1]\times\{1\}\right)\right)$, then
$O_{\frac{1}{n}}(x,y)=\{x\}\times(y-\frac{1}{n},y+\frac{1}{n})$, for $n\in\N$ form a countable neighborhood base
for $\tau(<)$ at $(x,y)$. 
They are open intervals since 
\begin{displaymath}
O_{\frac{1}{n}}(x,y)=((x,y-\frac{1}{n}),(x,y+\frac{1}{n}))=
\{(u,v)\in X: (x,y-\frac{1}{n}) \prec (u,v)\prec (x,y+\frac{1}{n})\}.
\end{displaymath}
Indeed, if $(u,v)$ is such that $(x,y-\frac{1}{n}) \prec (u,v)\prec (x,y+\frac{1}{n})$ 
 we should have $x\leq u\leq x$, i.e. $u=x$, and $y-\frac{1}{n} < v < y+\frac{1}{n}$, as required. 
Also if $(x,y)\in U$ is open in $\tau(\prec)$, then there exists an $n$ 
such that $O_{\frac{1}{n}} (x,y) \subset U$.

The neighborhoods of $(x,1)$ are
\begin{displaymath}
O_{\frac{1}{n}}(x,1)=(
(x,x+\frac{1}{n})\times [0,1])\cup
(\{x\} \times (1-\frac{1}{n},1])\cup
(\{x+\frac{1}{n}\} \times [0,\frac{1}{n})). 
\end{displaymath}

The neighborhoods of $(x,0)$ are
\begin{displaymath}
O_{\frac{1}{n}}=(
(x,x-\frac{1}{n})\times [0,1])\cup
(\{x-\frac{1}{n}\} \times (1-\frac{1}{n},1])\cup
(\{x\} \times [0,\frac{1}{n})).
\end{displaymath}
 Since $\{O_{\frac{1}{n}}(x,1):n\in\N\}$ and $\{O_{\frac{1}{n}}(x,0):n\in\N\}$ are also countable local bases at $(x,1)$ and $(x,0)$ respectively, it follows that $X$ is first countable.
\end{proof}

Now we proceed with the exposition of Example \ref{HJ}. 
Firstly, we  provide a proof for the following two statements of Hajnal and Juhasz:

\begin{cl} ``$\prec"$ is a complete order. 
\end{cl}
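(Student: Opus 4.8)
The plan is to deduce completeness of $\prec$ from the completeness of the usual order on $[0,1]$, exploiting the two-level structure of the lexicographic order. Let $S\subseteq X$ be a nonempty set with an upper bound. First I would project $S$ onto its first coordinate, writing $S_1=\{a\in[0,1]:(a,b)\in S\text{ for some }b\}$. This is a nonempty subset of $[0,1]$ bounded above, so by completeness of the usual order on $[0,1]$ the supremum $a^*=\sup S_1$ exists.

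Next I would distinguish two cases according to whether $a^*$ is \emph{attained} in $S_1$. In the attained case, when some point of $S$ has first coordinate exactly $a^*$, I would set $T=\{b\in[0,1]:(a^*,b)\in S\}$, which is nonempty and bounded above, and put $b^*=\sup T$; the proposed least upper bound is then $(a^*,b^*)$. In the non-attained case every point of $S$ has first coordinate strictly below $a^*$ (which in particular forces $a^*>0$), and the proposed least upper bound is $(a^*,0)$ instead. This last point is the crux: since the first-coordinate supremum is not realised, the second coordinate must ``reset'' to the minimum element $0$ of the lower factor.

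In each case I would then verify the two defining properties. That the candidate is an upper bound follows by comparing first coordinates and descending to the second coordinate only when the first coordinates coincide: in the attained case $a=a^*$ gives $b\in T$ hence $b\leq b^*$, and in the non-attained case every $a<a^*$. For the least upper bound property I would take an arbitrary $(c,d)$ strictly below the candidate and produce a member of $S$ strictly above it. If $c<a^*$ then $c$ does not bound $S_1$, so some $(a,b)\in S$ has $a>c$ and hence $(c,d)\prec(a,b)$. If $c=a^*$, then in the attained case $d<b^*=\sup T$ yields some $b\in T$ with $b>d$, so $(a^*,d)\prec(a^*,b)\in S$; in the non-attained case $c=a^*$ together with $(c,d)\prec(a^*,0)$ would force $d<0$, which is impossible since $d\in[0,1]$.

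The only genuine obstacle is getting the case split right: a single formula such as $(a^*,\sup\{b:(a^*,b)\in S\})$ is correct when $a^*$ is attained but meaningless (the inner set is empty) otherwise, and there the value $(a^*,0)$ is forced instead. Once the two cases are separated, the remaining verifications are a routine unwinding of the definition of $\prec$ together with completeness of $[0,1]$ applied in each coordinate.
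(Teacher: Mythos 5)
Your proposal is correct and follows essentially the same route as the paper's proof: project onto the first coordinate, take $\al=\sup$ of that projection, split into the cases where $\al$ is attained (candidate $(\al,\sup\{y:(\al,y)\in A\})$) or not attained (candidate $(\al,0)$), and verify both the upper-bound and leastness properties in each case. The only difference is cosmetic: for leastness in the non-attained case the paper shows each $(\al-\frac{1}{n},0)$ fails to be an upper bound, while you argue directly with an arbitrary point strictly below the candidate; both arguments are sound.
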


\begin{proof} 
Note that any subset $A\subset X$ is trivially bounded by $(1,1)$. 
Hence, we want to show that any subset $A\subset X$ has a least upper bound.

Let 
\begin{displaymath}
A_x= \{x\in [0,1]:\exists y\in [0,1] , (x,y)\in A\}.
\end{displaymath}
Since $A_x\subset [0,1]$, it is a bounded set of reals and hence $\al=\sup A_x$ exists. 

We have two cases:
\begin{case}[1]
 There exists  $y\in[0,1]$ such that $(\al,y)\in A$. 
Then the set $\{y\in[0,1]:(\al,y)\in A\}$ is nonempty and bounded (as a subset of $[0,1]$).
So let $\be=\sup\{y:(\al,y)\in A\}$. 
Then $(\al,\be)=\sup A$.
Indeed, let $(a,b)\in A$ Then $a\in A_x$, and so $a\leq \al$. 
If $a<\al$ then $(a,b)\prec (\al,\be)$.
If $a=\al$ then $b\leq \be$ as $(a,b)=(\al,b)\in \{y\in[0,1]:(\al,y)\in A\}$.
Hence $(a,b) \prec (\al,\be)$. 

Also, $(\al,\be)$ is the least upper bound: for suppose that $(a,b)\leq(\al,\be)$ is an upper bound. Then $a=\al$, since otherwise $(\al,y)>(a,b)$ for the supposed $y$ above. 
Also, if $b<\be$, then (since $\be$ is defined as a supremum), 
there is a $b'\in \{y\in[0,1]:(\al,y)\in A\}$ with $b<b'<\be$, so $(\al,b)<(\al,b')\in A$ and hence $(\al,b)$ is not an upper bound for $A$ after all. So $b=\be$.
\end{case}
\begin{case}[2]
 No point in $A$ has first coordinate $\al$, so
$(\{\al\}\times[0,1])\cap A = \emptyset$.
Then $\sup A=(\al,0)$.
\end{case}
First let us show that $(\al,0)$ is an upper bound for $A$.
Let $(a,b)\in A$. 
Then, by choice of $\al$, we have that $a\leq \al$. 
By assumption, $a\neq \al$, so we have that $a<\al$ and $(a,b)\prec(\al,0)$, as required. 

Let us show that $\sup A=(\al,0)$. 
It suffices to show that for every $n\in\N$, $(\al-\frac{1}{n},0)$ is not an upper bound for $A$. 
Let $n\in\N$; then $\al-\frac{1}{n}$ is not an upper bound for $A_x$. 
Hence there exists $a_n\in A_x$ and there exists $y_n\in [0,1]$ such that
$(a_n,y_n)\in A$ and $\al-\frac{1}{n}\leq a_n<\al$.
Then $(\al-\frac{1}{n},0)\preceq(a_n,y_n)$ for every $n\in\N$. 
Indeed: if $\al-\frac{1}{n} < a_n$ then $(\al-\frac{1}{n},0)\prec(a_n,y_n)$;
if $\al-\frac{1}{n}=a_n$ then $(\al-\frac{1}{n},0)=(a_n,0)\preceq (a_n,y_n)$.
Hence, for every $n\in\N$ we have found $(a_n,y_n)\in A$ such that $(\al-\frac{1}{n},0)\preceq (a_n,y_n)$.
Hence, $(\al-\frac{1}{n},0)$ is not an upper bound for $A$. 

Hence, $(\al,0)=\sup A$.
\end{proof}

\begin{cl}
$``\prec"$ is a dense order.
\end{cl}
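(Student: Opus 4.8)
The plan is to verify the definition of a \emph{dense} order directly: given two points $(a,b)\prec(c,d)$ in $X=[0,1]\times[0,1]$, I must produce a point $(e,f)\in X$ with $(a,b)\prec(e,f)\prec(c,d)$. Since the lexicographic order compares first coordinates first and breaks ties only with the second coordinate, the natural case split is according to whether the two points already differ in their first coordinate.

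First I would treat the case $a=c$. Then the hypothesis $(a,b)\prec(c,d)$ forces $b<d$, so both points lie on the same vertical fibre $\{a\}\times[0,1]$, on which $\prec$ restricts to the usual order of $[0,1]$. Because the reals are densely ordered, I can pick $f$ with $b<f<d$ (for instance $f=\tfrac{b+d}{2}$) and set $(e,f)=(a,f)$; then $(a,b)\prec(a,f)\prec(a,d)$ by the second clause in the definition of $\prec$.

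Next I would treat the case $a<c$. Here the key observation is that one should interpolate in the \emph{first} coordinate rather than the second, since the second coordinates may sit at the extremes (e.g. $b=1$ and $d=0$), leaving no room to move within a fibre. Using density of the reals I choose $e$ with $a<e<c$ and take any second coordinate, say $(e,f)=(e,0)$. Then $a<e$ gives $(a,b)\prec(e,0)$ and $e<c$ gives $(e,0)\prec(c,d)$, both by the first clause of the order and independently of the values of $b$ and $d$. This disposes of the case uniformly.

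The argument is essentially a two-case reduction to the density of the real line, so there is no serious obstacle; the only point requiring care is precisely the case $a<c$, where one must resist interpolating in the second coordinate (which fails at the boundary of $[0,1]$) and instead exploit density in the first coordinate, whose admissible values are then unconstrained by the endpoints.
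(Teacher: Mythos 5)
Your proof is correct and follows essentially the same route as the paper's: split on whether the first coordinates coincide, then interpolate in the second coordinate (when $a=c$) or in the first coordinate (when $a<c$) using density of the reals. The paper notes that in the case $a<c$ \emph{any} second coordinate $v\in[0,1]$ works, which is exactly the observation you make by choosing $f=0$.
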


\begin{proof}
Suppose $(a,b)\precneqq (c,d)$. We have two cases:

\begin{case}[1]
$a<c$. Then there exists  $u\in\R$ such that $a<u<c$. 
Then for any $v\in[0,1]$, we have that
$(a,b)\precneqq (u,v)\precneqq(c,d)$.
\end{case}

\begin{case}[2] $a=c$ and $b<d$. 
Then there exists  $v\in\R$ with $b<v<d$.
Then $(a,b)\precneqq(a,v)\precneqq(a,d)=(c,d)$.
\end{case}
\end{proof}

\begin{cl} $(X,\tau(\prec))$ is not CCC (and hence not separable). In fact, any dense subset of $X$ has cardinality $2^{\aleph_0}$. 
\end{cl}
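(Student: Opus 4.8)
The plan is to exhibit a single structural feature of the lexicographic order that settles both assertions simultaneously: the vertical fibres of $X$ are open. Concretely, for each $x\in[0,1]$ I would consider the order interval
\begin{displaymath}
V_x=\big((x,0),(x,1)\big)=\{(u,v)\in X:(x,0)\prec(u,v)\prec(x,1)\}.
\end{displaymath}
First I would check that $V_x=\{x\}\times(0,1)$: if $(x,0)\prec(u,v)\prec(x,1)$, the first coordinate is squeezed to $u=x$, and then $0<v<1$; this is exactly the computation already carried out in the first-countability claim. Being an order interval, each $V_x$ is a basic open set, and it is nonempty (it contains $(x,\tfrac{1}{2})$). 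Moreover, if $x\neq x'$ then $V_x\cap V_{x'}=\emptyset$, since every point of $V_x$ has first coordinate $x$.

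For the failure of CCC, I would simply observe that $\{V_x:x\in[0,1]\}$ is a family of pairwise disjoint nonempty open sets indexed by $[0,1]$, hence uncountable (indeed of cardinality $2^{\aleph_0}$). By Definition \ref{CCC}, this witnesses that $(X,\tau(\prec))$ is not CCC. Since every separable space is CCC --- a countable dense set must meet each member of a pairwise disjoint family of nonempty open sets in a distinct point --- it follows at once that $X$ is not separable.

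For the cardinality of dense subsets, let $D\subseteq X$ be dense. Then $D$ meets every nonempty open set, so in particular $D\cap V_x\neq\emptyset$ for every $x\in[0,1]$. Choosing some $d_x\in D\cap V_x$ for each $x$ gives a map $x\mapsto d_x$ that is injective, because the $V_x$ are pairwise disjoint; hence $|D|\geq|[0,1]|=2^{\aleph_0}$. On the other hand $D\subseteq X=[0,1]\times[0,1]$ forces $|D|\leq 2^{\aleph_0}$, so $|D|=2^{\aleph_0}$.

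The whole argument rests on the opening observation that each $\{x\}\times(0,1)$ is open; once that is in hand, everything else is immediate. I expect the only point needing care to be the verification that the order interval $((x,0),(x,1))$ collapses exactly onto the fibre $\{x\}\times(0,1)$, rather than any genuine obstacle --- this is precisely the ``squeezing'' of the first coordinate that is characteristic of the lexicographic order.
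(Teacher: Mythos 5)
Your proposal is correct and follows essentially the same route as the paper: both arguments rest on the family $\{\{x\}\times(0,1):x\in[0,1]\}$ of pairwise disjoint nonempty open fibres, which simultaneously refutes CCC and forces any dense set to have cardinality $2^{\aleph_0}$. Your write-up merely makes explicit two points the paper leaves implicit --- the verification that the order interval $\bigl((x,0),(x,1)\bigr)$ equals the fibre $\{x\}\times(0,1)$, and the injection $x\mapsto d_x$ --- so there is no substantive difference.
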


\begin{proof}
Let $D$ be any dense subset of $X$. 
Since $|D|\leq|X|=2^{\aleph_0}$, it suffices to show that if $D$ is dense then $|D|\geq 2^{\aleph_0}$.

To do this, it suffices to construct a family of $2^{\aleph_0}$-many disjoint non-empty open substes of $X$ (hence showing that $X$ is not CCC).
Let 
\begin{displaymath}
\cC = \{\{x\}\times(0,1):x\in[0,1]\}.
\end{displaymath}
Then $|\cC|=|[0,1]|=2^{\aleph_0}$ and all elements of $\cC$ are open in $(X,\tau(\prec))$. 
Since every element of $\cC$ must contain a point of $D$, we have that $|D|\geq2^{\aleph_0}$.
\end{proof}

\begin{cl}
$X$ does not contain an uncountable set $A=\{a_\al=(x_\al,y_\al):\al<\w_1\}$ of length $\w_1$ which is increasing with respect to $``\prec"$.
\end{cl}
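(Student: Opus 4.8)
The plan is to argue by contradiction: suppose such an increasing sequence $A=\{a_\al=(x_\al,y_\al):\al<\w_1\}$ exists. Before touching the square itself I would record the one genuinely useful fact, a lemma about $\R$: there is no strictly increasing sequence of reals indexed by $\w_1$. Indeed, given such a sequence $\{r_\al\}$, the open intervals $(r_\al,r_{\al+1})$ are nonempty and pairwise disjoint, so choosing a rational from each produces an injection of $\w_1$ into $\Q$, which is absurd. Everything else will be bookkeeping built on top of this lemma.

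Next I would analyse the first coordinates. Since $(x_\al,y_\al)\prec(x_\be,y_\be)$ forces $x_\al\leq x_\be$, the transfinite sequence $\{x_\al:\al<\w_1\}$ is non-decreasing in $[0,1]$. For each fixed real $x$, on the index set $\{\al:x_\al=x\}$ the lexicographic comparison reduces to comparing second coordinates, so $\{y_\al:x_\al=x\}$ is a strictly increasing sequence in $\R$; by the lemma this index set is at most countable. Thus every ``vertical'' fibre of the sequence contributes only countably many indices.

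The crux is to bound the number of distinct values taken by the first coordinate, i.e.\ to show the range $R=\{x_\al:\al<\w_1\}$ is countable. Assuming it is not, I would, for each $r\in R$, set $g(r)=\min\{\al:x_\al=r\}$; monotonicity of $\{x_\al\}$ makes $g$ strictly order-preserving, so $g(R)$ is an uncountable subset of $\w_1$ and hence has order type exactly $\w_1$ (an uncountable subset of $\w_1$ embeds in $\w_1$ yet cannot be countable). Enumerating $g(R)$ increasingly as $\{\be_\xi:\xi<\w_1\}$ and reading off the associated values $x_{\be_\xi}$, monotonicity together with injectivity of $g$ makes these values \emph{strictly} increasing, yielding a strictly increasing $\w_1$-sequence of reals and contradicting the lemma. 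Therefore $R$ is countable.

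Finally I would combine the two counts: $\w_1=\bigcup_{r\in R}\{\al:x_\al=r\}$ exhibits $\w_1$ as a countable union (over the countable set $R$) of countable sets, hence as a countable set, which is the desired contradiction. The main obstacle is precisely the third step. It is tempting to try to bound the length of the chain by a chain-condition argument, but this fails because $X$ is not CCC (indeed it carries $2^{\aleph_0}$ disjoint open intervals), so disjointness of intervals between consecutive terms gives no restriction. The real content is that the first-coordinate projection cannot assume uncountably many values along a monotone $\w_1$-sequence, and converting that intuition into a rigorous statement via the order type of an uncountable subset of $\w_1$ is where the care is needed.
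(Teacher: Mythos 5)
Your proof is correct, but it takes a genuinely different route from the paper's. The paper argues topologically: using the previously established completeness of $\prec$, it sets $a=\sup A$, observes that $A\cup\{a\}$ with the subspace topology would be a copy of $[0,\w_1]$, and derives a contradiction because $[0,\w_1]$ is not first countable at its top point while first countability of $X$ is inherited by subspaces. You argue purely combinatorially: your only real input is that $\R$ contains no strictly increasing $\w_1$-sequence (via disjoint intervals and countability of $\Q$), and the rest is bookkeeping with fibres and order types showing that otherwise $\w_1$ would be a countable union of countable sets. Each approach buys something. The paper's proof is shorter given the machinery already in place and reflects a reusable topological technique (any $\w_1$-chain with its supremum looks like $[0,\w_1]$, which fails first countability); on the other hand it quietly identifies the subspace topology on $A\cup\{a\}$ with the order topology of $[0,\w_1]$, a genuine subtlety since the subspace topology on a subset of an ordered space can be strictly finer than the induced order topology. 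Your proof avoids topology altogether, so it establishes the stronger, purely order-theoretic fact that the lexicographic square has no chains of type $\w_1$ at all, independently of $\tau(\prec)$ and of first countability; the price is the extra care with order types, and your key steps there are sound: the first-occurrence map $g$ is strictly order-preserving by monotonicity of the $x_\al$, an uncountable subset of $\w_1$ indeed has order type exactly $\w_1$, and each vertical fibre is countable because its second coordinates form a strictly increasing family of reals.
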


\begin{proof}
Suppose the contrary: that $A$ is such a set, and let $a=\sup A$. 
Then the space $A\cup\{a\}$, as a subspace of $X$, is homeomorphic to $[0,\w_1]$ with the order topology. 
But $A\cup \{a\}$ is first countable as a subspace of the first countable space $X$. 
On the other hand, $[0,\w_1]$ is not first countable. 
 For if $\{(w_n,\w_1]:n\in\N\}$ is a contable base at $\w_1$, then 
we would have that $\sup\{w_n:n\in\N\}=\w_1$, which is a contradiction, because $\w_1$ does not have a countable cofinal subset. 
Thus $X$ cannot contain an increasing sequence of length $\w_1$.
\end{proof}

\begin{cl}
In $X$ there is no decreasing uncountable subset of size $\w_1$.
\end{cl}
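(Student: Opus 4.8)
The plan is to reduce this claim to the previous one by exploiting the symmetry between an order and its reverse. Let $\prec^*$ denote the reverse order on $X$, defined by $p\prec^* q$ iff $q\prec p$. A strictly $\prec$-decreasing sequence is exactly a strictly $\prec^*$-increasing sequence, so it suffices to show that $(X,\prec^*)$ admits no increasing $\w_1$-sequence, and for this I would simply re-run the argument of the previous claim with $\prec$ replaced by $\prec^*$.

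For that re-run to be legitimate I would first verify the two structural facts the previous proof relied on. First, the order topology is unchanged: every bounded open $\prec$-interval $(p,q)$ coincides as a set with the $\prec^*$-interval $(q,p)$, and the sub-basic rays correspond likewise, so $\tau(\prec^*)=\tau(\prec)$; in particular $(X,\tau(\prec^*))$ is still first countable. Second, $\prec^*$ is again complete: a set bounded above in $\prec^*$ is bounded below in $\prec$, and its greatest $\prec^*$-upper bound is its $\prec$-infimum, which exists because every nonempty subset of $X$ is bounded below by $(0,0)$ and the set of its $\prec$-lower bounds, being bounded above by any element of the subset, has a $\prec$-supremum by completeness of $\prec$.

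With these in hand the old argument transfers directly. Given a $\prec^*$-increasing (equivalently, $\prec$-decreasing) uncountable $A=\{a_\al:\al<\w_1\}$, I would set $a=\sup_{\prec^*}A=\inf_{\prec}A$, observe that $A\cup\{a\}$ is homeomorphic to $[0,\w_1]$ with its order topology, and derive a contradiction from the fact that $A\cup\{a\}$ is first countable as a subspace of the first countable space $X$, whereas $[0,\w_1]$ is not first countable at its top point (its top point has no countable cofinal approach, exactly as in the previous claim).

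I expect the only genuine obstacle to be the second structural fact, namely confirming that completeness of $\prec$ is two-sided so that $\inf_\prec A$ exists. This is routine once one notes that $(0,0)$ is a global lower bound, but it deserves to be stated explicitly, since the definition of completeness in the text is phrased only in terms of least upper bounds. Alternatively, one can avoid the reversal language altogether and mirror the previous proof directly: take $a=\inf A$, and check that the strictly decreasing sequence together with its infimum is order-isomorphic to the reverse of $[0,\w_1]$, which carries the same order topology and is therefore again homeomorphic to $[0,\w_1]$, yielding the same contradiction with first countability.
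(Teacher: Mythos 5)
Your proposal is correct and follows essentially the same route as the paper: the paper likewise reduces to the increasing case by working with the reverse of $\prec$, noting that infima exist "in a similar way" and that $A\cup\{\inf A\}$ is again homeomorphic to $[0,\w_1]$, contradicting first countability. Your write-up simply makes explicit the two details the paper leaves as "analogous" (existence of $\inf$ via completeness, and invariance of the order topology under order reversal), which is a welcome but not a different argument.
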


\begin{proof}
In a similar way as above, we can prove that any $A\subset X$ has a greatest lower bound, so $\inf A$ exists.
Also, we can analogously prove that if $A$ is a decreasing uncountable subset of size $\w_1$ in $X$ and $a=\inf A$, then
$\{a\}\cup A$ would again be homeomorphic to $[0,\w_1]$ (but this time with the topology on $\{a\}\cup A$ generated by the reverse of $``\prec"$ order) -- a contradiction.
\end{proof}

\begin{lem} 
Let $(X,<)$ be an order-complete linearly ordered set in which there is no decreasing or increasing ordered subset of type $\w_1$. 
Then both $\Xp$ and $\Xm$ are Lindel\"of topological spaces. 
\end{lem}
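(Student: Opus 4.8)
The plan is to prove the statement for $\Xp$; the case of $\Xm$ then follows by the symmetric argument obtained by reversing the order $``<"$, since order-reversal turns $\Xp$ into a copy of $\Xm$, preserves order-completeness (interchanging suprema and infima), and interchanges increasing and decreasing subsets of type $\w_1$, so the hypotheses are invariant. By Proposition \ref{basic} it suffices to extract a countable subcover from an arbitrary cover $\U$ of $\Xp$ by basic sets $[a,b)$. I will also use that an order-complete order has greatest lower bounds for every nonempty subset that is bounded below (this follows from the stated completeness by taking the supremum of the set of lower bounds), so that both $\sup$ and $\inf$ of suitable subsets exist in $X$.

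Fix any $c\in X$ (if $X=\emptyset$ there is nothing to prove). First I would cover $\{x\in X: x\geq c\}$ by a transfinite recursion marching to the right. Set $x_0=c$. Given $x_\al$ that is not the maximum of $X$, choose $U_\al\in\U$ with $x_\al\in U_\al$ and, using that $U_\al$ is open in $\Xp$, pick $x_{\al+1}>x_\al$ with $[x_\al,x_{\al+1})\subseteq U_\al$; at a limit stage $\la$ put $x_\la=\sup\{x_\al:\al<\la\}$, which exists by completeness as long as $\{x_\al:\al<\la\}$ is bounded above. The sequence $\langle x_\al\rangle$ is strictly increasing, and by construction $\bigcup_\al [x_\al,x_{\al+1})=[x_0,\sup_\al x_\al)$, while each chosen $U_\al$ also contains $x_\al$; hence the selected $U_\al$ cover $\{x:x\geq c\}$ as far as the recursion reaches. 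The key point is termination: a strictly increasing sequence is order-isomorphic to its index, so if the recursion ran through all $\al<\w_1$ it would produce an increasing subset of $X$ of type $\w_1$, contradicting the hypothesis. Therefore it stops at a countable stage, either because some $x_\al$ equals $\max X$ or because $\{x_\al\}$ becomes cofinal in $X$; in both cases the countably many $U_\al$ cover $\{x:x\geq c\}$.

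Symmetrically, I would cover $\{x\in X: x\leq c\}$ by a recursion marching left: set $c_0=c$, and given $c_\al$ choose $[a,b)\in\U$ containing $c_\al$ and put $c_{\al+1}=a$, so that the whole segment $[c_{\al+1},c_\al]=[a,c_\al]\subseteq[a,b)$ lies inside a single member of $\U$; at limits take $c_\la=\inf\{c_\al:\al<\la\}$, which exists by the dual completeness. This produces a strictly decreasing sequence, which by the ``no decreasing subset of type $\w_1$'' hypothesis must terminate at a countable stage (reaching $\min X$ or becoming coinitial), and the associated countably many members of $\U$ cover $\{x:x\leq c\}$. The union of the two countable families is then a countable subcover of $\U$, so $\Xp$ is \Lind.

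The routine reductions and the ``one member of $\U$ per step'' bookkeeping are straightforward; the part requiring care, and which I expect to be the main obstacle, is the interaction between the limit stages and termination. Concretely, one must verify that completeness really does supply the suprema and infima used at limit stages, that the constructed sequence is genuinely strictly monotone (so that it is an order-embedding of its length), and that the recursion cannot silently run for $\w_1$ steps: the only ways for it to stop are reaching an endpoint or becoming cofinal/coinitial, and the no-$\w_1$-subset hypothesis is precisely what excludes the remaining possibility of an uncountable strictly monotone chain. Treating the boundary cases (no maximum, no minimum) uniformly through the ``cofinal/coinitial'' alternative is the one place where the argument must be stated with some delicacy.
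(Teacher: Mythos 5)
Your rightward march is sound, and it is in fact a pleasant simplification of what the paper does (the paper instead proves that every closed interval $[a,b]$ can be countably covered, via a ``creeping along'' argument on the set $D$ of points $d$ such that $[a,d]$ is countably coverable). The genuine gap is in the leftward march, at the assertion that it ``produces a strictly decreasing sequence.'' Setting $c_{\al+1}=a$ for a chosen $[a,b)\in\U$ containing $c_\al$ only gives $c_{\al+1}\leq c_\al$, and equality can be \emph{forced}, because the topology of $\Xp$ is not left--right symmetric: every basic neighborhood of a point $p$ extends to the right of $p$, but $[p,y)$ is already a neighborhood of $p$, so a cover need not contain any set reaching strictly to the left of $p$. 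Concretely, take $X=[0,1)$ with its usual order (order-complete, with no monotone subsets of type $\w_1$) and the basic open cover
\begin{displaymath}
\U=\left\{[0,\tfrac{1}{2})\right\}\cup\left\{[\tfrac{1}{2},y):\tfrac{1}{2}<y<1\right\}.
\end{displaymath}
Starting your leftward recursion at $c_0=\frac{3}{4}$ forces $c_1=\frac{1}{2}$; but every member of $\U$ containing $\frac{1}{2}$ has left endpoint exactly $\frac{1}{2}$, so $c_2=c_1$ and the recursion stalls: it terminates in neither of your two listed ways (reaching $\min X$ or becoming coinitial), the termination argument collapses (a non-injective sequence embeds no copy of $\w_1$), and the chosen sets never cover $[0,\frac{1}{2})$. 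The lemma is of course still true here --- the point is that your recursion cannot certify it. Nor is the march locally repairable: at a stall point $p$ with no immediate predecessor, continuing from any chosen $c'<p$ leaves the gap $(c',p)$, which a strictly downward march never revisits; covering that gap is a fresh sub-problem of the same kind, so one needs a genuinely different induction scheme, not just a better choice of $c_{\al+1}$.

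The correct use of the ``no decreasing $\w_1$-subset'' hypothesis is not a downward march through the cover but the construction of a countable coinitial sequence in $X$ itself: take $a_0$ arbitrary, let $a_{\al+1}$ be \emph{any} point below $a_\al$, and take infima at limits (these exist by the dual completeness as long as the set so far is bounded below). Here strict decrease is automatic, so the hypothesis forces the recursion to halt at a countable stage, exactly when it reaches $\min X$ or the sequence $\{a_n:n<\w\}$ becomes unbounded below, i.e.\ coinitial. Then $X=\bigcup_{n<\w}\{x\in X:x\geq a_n\}$, and each such ray is countably covered by your (correct) rightward march started at $a_n$; the union of these countably many countable families is the desired subcover. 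This repaired argument is close in spirit to the paper's, which builds both a countable cofinal and a countable coinitial sequence, decomposes $\Xp$ into countably many closed intervals $[a_n,b_m]$, and covers each interval by the creeping argument --- where the delicate case ($\sup D$ having no immediate predecessor) is handled by a countable increasing sequence cofinal below it, i.e.\ by precisely the kind of appeal to the hypothesis that your left-hand march omits.
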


\begin{rmk}
Hajnal and Juhasz presented a more general case. 
Here we consider in detail the partial case for \Lin spaces.
\end{rmk}

\begin{proof}
Since $\Xp$ and $\Xm$ are homeomorphic (the reverse order in $\Xp$ gives $\Xm$ and vice-versa),
it suffices to show that $\Xp$ is Lindel\"of. 

Let $\U$ be an open cover of $\Xp$ by basic open sets of the form $[x,y)$. 

First let us show that:
\begin{cl} If $a,b\in\Xp$ and $a<b$ then the closed interval $[a,b]$ is covered by countably many elements of $\U$. 
\end{cl}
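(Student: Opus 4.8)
The plan is to argue by contradiction: I would suppose $[a,b]$ has no countable subcover from $\U$ and derive a contradiction from the two standing hypotheses, that $X$ is order-complete and contains no increasing ordered subset of type $\w_1$. The engine of the argument is the ``covered initial segment''
\[
C=\{x\in[a,b] : [a,x]\text{ is covered by countably many elements of }\U\}.
\]
Since $\U$ covers $\Xp$, the point $a$ lies in some member of $\U$, so $a\in C$; and $C$ is an initial segment of $[a,b]$, because $[a,y]\subseteq[a,x]$ whenever $a\le y\le x$. As $C$ is bounded above by $b$, order-completeness furnishes $c=\sup C$ with $a\le c\le b$, and the whole proof reduces to showing $c\in C$ and then $c=b$.

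I would isolate two facts. First, a push-forward step: if $x\in C$ with $x<b$, pick $[u,v)\in\U$ containing $x$; should $v>b$ we would have $[x,b]\subseteq[u,v)$ and hence cover all of $[a,b]$, against our assumption, so $v\le b$, and then $[a,v]=[a,x]\cup[x,v)\cup\{v\}$ is covered by the countable family for $[a,x]$ together with $[u,v)$ and one member of $\U$ through $v$; this produces a point $v\in C$ strictly above $x$. Second, the limit step $c\in C$: choosing $[u,v)\in\U$ with $u\le c<v$, the subcase $u<c$ is immediate, since then some $x\in C$ lies in $(u,c]$, whence $[x,c]\subseteq[u,v)$ and $[a,c]=[a,x]\cup[x,c]$ is countably covered.

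The main obstacle is the remaining subcase $u=c$, where the neighborhood $[c,v)$ covers nothing below $c$, so $[a,c)$ must be handled by hand; this is exactly where the absence of increasing $\w_1$-chains is genuinely used. I would show that $c$ has countable cofinality from below inside $C$: otherwise every countable subset of $C\cap[a,c)$ has supremum strictly less than $c$, and a transfinite recursion then builds a strictly increasing sequence of type $\w_1$ in $X$, contradicting the hypothesis. Given a countable $\{x_n\}\subseteq C$ increasing to $c$, one has $[a,c)=\bigcup_n[a,x_n]$ (any $z<c$ satisfies $z<x_n$ for some $n$), a countable union of countably covered sets, hence countably covered; adjoining $[c,v)$ covers $[a,c]$, so $c\in C$. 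Finally, if $c<b$ the push-forward step yields a point of $C$ strictly above $c$, contradicting $c=\sup C$; hence $c=b\in C$, that is, $[a,b]$ is covered by countably many elements of $\U$, contradicting our assumption and establishing the claim.
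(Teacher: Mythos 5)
Your proposal is correct and takes essentially the same route as the paper: the paper's proof likewise takes the supremum $c$ of the covered initial segment (your $C$, the paper's $D$), pushes past $c$ using a single element of the cover containing $c$, and invokes the absence of increasing $\w_1$-chains in exactly the same place, namely to extract a countable sequence cofinal in $c$ from below so that $[a,c)$ becomes a countable union of countably covered intervals. The one repair your sketch needs is in the subcase $u=c$: the transfinite recursion there requires $C\cap[a,c)$ to be cofinal in $c$, which can fail precisely when $c\in C$ (for instance, if $c$ has an immediate predecessor lying in $C$, then every countable subset of $C\cap[a,c)$ has supremum strictly below $c$ and yet no $\w_1$-chain can be built, so your stated dichotomy breaks down); since in that event there is nothing left to prove, opening the subcase with ``if $c\in C$ we are done, so assume $c\notin C$'' restores cofinality (no element of $C$ equals $c$, and $c$ is the least upper bound of $C$), after which the recursion runs exactly as you describe and the proof is complete.
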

\begin{proof}
Consider the following subset of $[a,b]$:
\begin{displaymath}
D=\{d\in[a,b]: [a,d] \textrm{ can be covered by countably many }U \in \U\}.
\end{displaymath}
Now, $D\neq\emptyset$, since $a\in D$ ($[a,a]=\{a\}$ is covered by some element of the cover $\U$. )
Since $D$ is a bounded subset of $X$ ($d\leq b$ for any $d\in D$), 
$c=\sup D$ exists. 

We shall show that $c=b$. 
Let us first note that $[a,c]$ can be covered by countably many elements of $\U$. 
The case $c=a$ is obvious.
If $a<c$ we have two cases:

\begin{case}[1]
$c=\sup D$ has an immediate predecessor, i.e. there exists $c_1 \in X$ such that $c_1\lneqq c$ and $(c_1,c)=\emptyset$. 
Then since $c=\sup D$ we have that $c_1\in D$. 
Note that $c_1\nless a$ as then $a\in (c_1,c)=\emptyset$ - a contradiction.
That means that $[a,c_1]$ is covered by countably many elements of $\U$ and hence 
$[a,c]=[a,c_1]\cup \{c\}$ will also be covered by countably many elements of $\U$ 
(at most one more to cover $\{c\}$). 
\end{case}
\begin{case}[2]
$c=\sup D$ has no immediate predecessor. 
Let $\la$ be a cardinal number with $\la\leq|X|$.
Then we can construct an increasing well-ordered sequence $\{c_\al:\al<\la\}$, where $c_\al<c$ for all $\al<\la$ in the following way:
 Let $c_0=a$. 
Since $c$ has no immediate predecessor, there is a $c_1\in (a,c)$, i.e. $c_0<c_1<c$. 
Suppose for some $\al_0 <\la$ we have already constructed a well-ordered $\{c_\al:\al<\al_0\}$
such that $c_\al<c$ for every $\al<\al_0$. 
Since $\{c_\al:\al<\al_0\}$ is bounded by $c$, we can take $c_{\al_0}=\sup\{c_\al:\al<\al_0\}$. 
\end{case}
Then $c_{\al_0}\leq c$. 
If $c_{\al_0}=c$ we are done. 
If $c_{\al_0}<c$ we can carry on the induction up to (possibly) $\la$. 
But this process has to stop at some countable ordinal $\al^{*}$, since by assumption, there is no increasing uncountable subset of $X$. 
So in fact, when $c$ has no immediate predecessor, we can construct $\{c_n:n<\w\}$ which is well-ordered and such that:
$c_n<c$ for all $n\in\w$ and
if $a<e<c$ then there is an $n\in\w$ such that $e\leq c_n$ (i.e. $\{c_n:n\in\w\}$ is cofinal with $c$). 

Let us  note that since $c_n<c$, then $c_n\in D$. 
But $[a,c)=\bigcup_{n<\w}[a,c_n)$, and hence can also be covered by countably many elements of $\U$. 
Then we need possibly one more element of $\U$ to cover $[a,c]=[a,c)\cup\{c\}$. 

Hence $c\in D$, i.e. $[a,c]$ can be covered by countably many elements of $\U$.

If $c=b$, then we are done. 
Suppose that $c<b$.
Again we have two cases to consider:

\begin{case}[1]
$c$ has an immediate successor $c'$, i.e. $c'\in[x,y)$.
Consider $[a,c']=[a,c]\cup\{c\}$.
Then, since $[a,c]$ is countably covered, so is $[a,c']$.
Hence $c'\in D$ and $c'>c=\sup D$ - a contradiction.
\end{case}
\begin{case}[2]
 $c$ has no immediate successor.
Since $c$ is covered by some element $[x,y)\in\U$, $[x,y)$ contains some $c'>c$.
Again - consider $[a,c']=[a,c]\cup[c,c']$.
We know that $[a,c]$ is covered by countably many elements of $\U$ and $[c,c')\subset [x,y]$, hence $[a,c']$ is covered by countably many elements of $\U$.
If $c'\leq b$, then $c'\in D$ and $c'>c=\sup D$ - contradiction.
If $c'>b$ then $[a,b]\subset[a,c']$ hence $[a,b]$ can be countably covered, 
hence $b\in D$ and $c=\sup D < b$ - contradiction.
\end{case}
Hence $c=b$, as required, and hence $[a,b]$ can be covered by countably many elements.
\end{proof}

Now suppose $X$ has no biggest element. 
We shall construct a sequence $\{b_\al:\al<\la\}$ that is cofinal in $X$ 
(that means that for every $x\in X$ there is an $\al<\la$ such that $x< b_\al$ and $b_\al<b_{\al'}$ whenever $\al<\al'<\la$). 
Let $b_0$ be an arbitrary element of $X$ and let $\{b_\al:\al<\al_0\}$ be already constructed such that $b_\al<b_{\al'}$ whenever $\al<\al'$ for some $\al_o < \la$. 
If there is $x\in X$ such that $\be_\al < x$ for every $\al<\al_0$ then $\{\be_\al:\al<\al_0\}$ is a bounded increasing sequence and we may take (by the conditions of the Lemma) $b_{\al_0}=\sup\{b_\al:\al<\al_0\}$ and the induction continues. 
If such $x$ does not exist, then $\{\be_\al:\al<\al_0\}$ is our cofinal sequence (i.e. for all $x\in X$ there exists an $\al_x$ such that $x<\be_{\al_x}$).
By assumptions on $X$, we cannot have an increasing uncountable sequence, so again we must have that this $\la$ is countable, so
$\{b_\al:\al<\la\}$ can be relabeled as $\{b_n:n<\w\}$. 
(Note that if $X$ has a biggest element, then the cofinal sequence $\{b_\al:\al<\la\}$ will have a constant tail.)

Similarly, supposing that $X$ has no smallest element, we can construct a coinitial sequence $\{a_n:n<\w\}$, i.e. a sequence such that for every $x\in X$ there is an $a_n$ such that $a_n<x$ (because in $X$ there are no uncountable ordered decreasing sets). 
Then $\Xp = \bigcup \{[a_n,b_m]:a_n<b_m; n,m<\w\}$. 
Indeed, if $x\in \Xp$ then there are $a_n\leq x$ and $b_m\geq x$ with $x\in [a_n,b_m]$. (Similarly, if $X$ has a smallest element, the coinitial sequence will have a constant tail consisting of just this element.)

But we have proved that any closed interval in $X$ can be countably covered by elements of $\U$. 
Hence, since $\Xp$ is the union of countably many intervals, it can also be covered by countably many elements of $\U$. 

Hence $\Xp$ is Lindel\"of.
\end{proof}

\begin{lem} 
Let $(X,<)$ be a densely ordered set which does not contain a countable dense subset. Then the product space $\Xp\times\Xm$ is not weakly Lindel\"of.
\end{lem}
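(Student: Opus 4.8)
The plan is to exhibit a single open cover of $\Xp\times\Xm$ that admits \emph{no} countable subfamily with dense union, which is exactly a witness that the space fails to be weakly Lindel\"of. The geometric idea is to surround the diagonal $D=\{(x,x):x\in X\}$, which is closed and discrete in $\Xp\times\Xm$, by two families of quadrant-shaped clopen sets. For each $x\in X$ I set
\[
L_x=\{(s,t)\in\Xp\times\Xm:s<x<t\},\qquad R_x=\{(s,t)\in\Xp\times\Xm:t\le x\le s\}.
\]
Here $L_x$ is the product of the ray $\{s<x\}$ (clopen in $\Xp$) with the ray $\{t>x\}$ (clopen in $\Xm$), and $R_x$ is likewise a product of the clopen rays $\{s\ge x\}$ and $\{t\le x\}$; hence both are open. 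I then take $\U=\{L_x:x\in X\}\cup\{R_x:x\in X\}$.

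First I would check that $\U$ is an open cover. Since $L_x\subseteq\{(s,t):s<t\}$ and, conversely, density of the order lets me insert a point $x$ with $s<x<t$ whenever $s<t$, we get $\bigcup_x L_x=\{(s,t):s<t\}$. Likewise $R_x\subseteq\{(s,t):s\ge t\}$, and for any $(s,t)$ with $s\ge t$ the choice $x=t$ gives $(s,t)\in R_x$, so $\bigcup_x R_x=\{(s,t):s\ge t\}$. Together these two sets exhaust $\Xp\times\Xm$, and in particular $(x,x)\in R_x$ for every $x$.

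The heart of the argument is to show that no countable $\V\subseteq\U$ satisfies $\bar{\bigcup\V}=\Xp\times\Xm$. Write such a $\V$ as $\{L_x:x\in P\}\cup\{R_x:x\in Q\}$ with $P,Q\subseteq X$ countable, so $P\cup Q$ is countable. Here I invoke the hypothesis that $X$ has no countable dense subset: $P\cup Q$ cannot be dense, so there is a nonempty convex open set $O$ (an open interval or ray) disjoint from $P\cup Q$. Because the order is dense it has no isolated points, so $O$ is infinite and I may choose $z<e<y$ in $O$; by convexity the whole interval $(z,y)$ lies in $O$ and hence misses $P\cup Q$. I then consider the basic neighborhood $\mathcal N=[e,y)\times(z,e]$ of the diagonal point $(e,e)$ and claim $\mathcal N\cap\bigcup\V=\emptyset$. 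Every $(s,t)\in\mathcal N$ satisfies $t\le e\le s$, hence $s\ge t$; this rules out membership in any $L_x$ (which forces $s<t$), so $\mathcal N$ misses $\bigcup_{x\in P}L_x$ outright. For the $R$-family, any $(s,t)\in\mathcal N$ has $t,s\in(z,y)$ with $t\le s$, so $[t,s]\subseteq(z,y)\subseteq O$; as $O$ avoids $Q$, there is no $x\in Q$ with $t\le x\le s$, and thus $\mathcal N$ also misses $\bigcup_{x\in Q}R_x$. Hence $\mathcal N$ is a nonempty open set disjoint from $\bigcup\V$, so $(e,e)\notin\bar{\bigcup\V}$ and $\V$ does not have dense union --- the desired contradiction.

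I expect the main obstacle to be packaging the two geometric facts that make $\mathcal N$ work simultaneously: that $\mathcal N$ lies entirely on or below the diagonal (so the ``above-diagonal'' sets $L_x$ are automatically avoided) and that the closed interval $[t,s]$ attached to each of its points is trapped inside the gap $O$ (so the ``below-diagonal'' sets $R_x$ with $x\in Q$ are avoided). The only genuine content beyond bookkeeping is the use of non-separability: a countable index set $P\cup Q$ fails to be dense, which is precisely what produces the gap $O$ on which both families are simultaneously blind. A minor point to handle with care is the case in which $O$ is a ray rather than a bounded interval, but the same choice of $z<e<y$ inside $O$ works verbatim, since only the convexity of $O$ is used.
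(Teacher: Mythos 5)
Your proof is correct and follows essentially the same route as the paper: your quadrants $R_x$ are exactly the paper's sets $[p,\ra)\times(\lar,p]$, and your key step (a countable index set cannot be dense, so there is a convex gap $O$, and the below-diagonal box $[e,y)\times(z,e]$ inside it misses every member of the countable subfamily) is precisely the paper's argument with its gap $(a,b)$, midpoint $c$, and witness box $[c,b)\times(a,c]$. The only cosmetic difference is that you cover the above-diagonal region by the family $\{L_x\}$ of ray products, where the paper uses the single open set $\G=\{(x,y):x<y\}$; nothing essential changes.
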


\begin{proof}

The basic sets in the product topology on $\Xp\times\Xm$ are of the form $[a,b)\times(c,d]$. 

We construct an open cover such that every countable subcover is not dense in $\Xp\times\Xm$. 

Let 
\begin{displaymath}
\D=\{(x,x)\in X\}
\end{displaymath}
be the diagonal of $\Xp\times\Xm$, and let
\begin{displaymath}
\G=\{(x,y)\in \Xp\times\Xm: x<y\}. 
\end{displaymath}
Let us prove that $\G$ is open in $\Xp\times\Xm$. 
So let $(p,q)\in\G$. 
Since $p<q$ and $``<"$ is a dense order on $X$, there exists $r\in X$ with $p<r<q$. 
Then $[p,r)\times(r,q]$ is a neighborhood of $(p,q)$ contained in $\G$. 
 
Now, let 
\begin{displaymath}
[p,\ra)=\{x\in X:x\geq p\}
\end{displaymath}
and
\begin{displaymath}
(\lar,p]=\{x\in X:x\leq p\}.
\end{displaymath}
These sets are open (respectively in $\Xp$ and $\Xm$). 

Consider the following family of open subsets of  $\Xp\times\Xm$:
\begin{displaymath}
\U=\{\G\}\cup\{[p,\ra)\times(\lar,p]:p\in X\}. 
\end{displaymath}
$\U$ is an open cover of  $\Xp\times\Xm$, since if $(x,y)\in \Xp\times\Xm$ we have two cases:
\begin{case}[1]
$x<y$, then $(x,y)\in\G$.
\end{case}
\begin{case}[2]
$x\geq y$, then $(x,y)\in[x\ra)\times(\lar,x]$.
\end{case}
We will prove that for any countable subfamily $\V\subset\U$, the union $V=\bigcup\V$ is not dense in $\Xp\times\Xm$.
In order to make the exposition of Hajnal and Juhasz more transparent, let us note that $\G$ is also closed in $\Xp\times\Xm$, because 
if $(x,y)\notin \G$ then $x\geq y$ and $(x,y)\in [x\ra)\times(\lar,x]$ which is an open subset of $\Xp\times\Xm$ that does not intersect $\G$. 

Let
\begin{displaymath}
A=\{p\in X:[p,\ra)\times(\lar,p]\in\V\}.
\end{displaymath}
Note that if $A=\emptyset$ then $\V=\{\G\}$, and $\bar{\bigcup\V}=\bar{\G}=\G$, hence does not cover $\Xp\times\Xm$.
Now let $A\neq\emptyset$. 
Since $|A|\leq|\V|\leq |\W|$ and $X$ doesn't contain a countable dense subset, $A$ cannot be dense in $(X,<)$.
Hence there is an open interval $(a,b)\in X$ such that $(a,b)\cap A=\emptyset$. 
Since $``<"$ is a dense order, there exists $c\in X$ with $a<c<b$. 
Then $[c,b)\times(a,c]\neq\emptyset$ because $(c,c)\in [c,b)\times(a,c]$.
We have (as in the proof that $\G$ is closed) that $[c,b)\times(a,c]\cap\G=\emptyset$.
Also, for any $p\in A$ we have that 
\begin{displaymath}
([p,\ra)\times(\lar,p])\cap([c,b)\times(a,c])=\emptyset
\end{displaymath}
since if $(r,s)\in ([p,\ra)\times(\lar,p])\cap([c,b)\times(a,c])$ 
then $p\leq r<b$ and $a<s\leq p$ 
so $a<p<b$, a contradiction.

Hence $V\cap [c,b)\times(a,c]=\emptyset$, so we have found a nonempty open subset of $\Xp\times\Xm$ which 
does not intersect $\bigcup\V=V$. 
Hence $\bigcup\V$ is not dense in $\Xp\times\Xm$, hence $\Xp\times\Xm$ is not weakly Lindel\"of.
\end{proof}

Let us also provide a proof of the following result, stated in \cite{SZ}.

\begin{thm} \label{wLtimescpct=wL}If $X$ is weakly Lindel\"of  and $Y$ is compact, then $X \times Y$ is weakly-Lindel\"of. 
\end{thm}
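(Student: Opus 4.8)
The plan is to mirror the proof of Theorem \ref{productWLCpct} (the almost \Lin analogue of this statement), adapting only the final step to accommodate the fact that $X$ is merely weakly \Lind rather than almost \Lind. The skeleton is the standard tube-lemma argument: slice over each point of $X$, use compactness of $Y$ to cover each slice by finitely many basic rectangles, contract to an open cover of $X$, and apply the covering hypothesis on $X$.

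First I would reduce to covers by basic open sets. Given an arbitrary open cover of $X\times Y$, one refines it to a cover by basic rectangles $U\times V$, each contained in some member of the original cover; a countable subfamily of rectangles with dense union lifts back to a countable subfamily of the original cover with an at-least-as-large (hence still dense) union. So it suffices to treat a cover $\U\subseteq\{U\times V:U\in\tau_X,V\in\tau_Y\}$. Next, for each fixed $x\in X$ the slice $\{x\}\times Y$ is compact, so finitely many rectangles $U_{x_i}\times V_{x_i}$ ($1\leq i\leq n_x$) from $\U$ cover it; setting $W_x=\bigcap_i U_{x_i}$ gives an open $W_x\ni x$ with $\{x\}\times Y\subseteq\bigcup_i W_x\times V_{x_i}$, exactly as in the earlier proof. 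The family $\W=\{W_x:x\in X\}$ covers $X$, and here I invoke weak \Lind ness of $X$ to extract a countable $\{W_{x_j}:j\in\N\}$ whose union $G=\bigcup_j W_{x_j}$ is \emph{dense} in $X$. The candidate subfamily is then $\V=\{U_{x_{j_i}}\times V_{x_{j_i}}:1\leq i\leq n_{x_j},\ j\in\N\}$.

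The key step, and the point of divergence from the almost \Lin case, is to show $\bar{\bigcup\V}=X\times Y$ directly by a density argument. Fix $(s,t)\in X\times Y$ and an arbitrary basic neighborhood $P\times Q$ of it. Since $G$ is dense, $P\cap W_{x_{j_0}}\neq\emptyset$ for some $j_0$; choose $s'$ in this intersection. Because the rectangles $W_{x_{j_0}}\times V_{x_{j_0},i}$ cover $\{x_{j_0}\}\times Y$ and $x_{j_0}\in W_{x_{j_0}}$, their second factors cover $Y$, so $t\in V_{x_{j_0},i_0}$ for some $i_0$. Then $(s',t)\in(P\times Q)\cap(W_{x_{j_0}}\times V_{x_{j_0},i_0})$, and the latter rectangle lies inside $U_{x_{j_0},i_0}\times V_{x_{j_0},i_0}\in\V$. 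Thus every basic neighborhood of $(s,t)$ meets $\bigcup\V$, so $(s,t)\in\bar{\bigcup\V}$; as $(s,t)$ was arbitrary, $\bar{\bigcup\V}=X\times Y$ and $X\times Y$ is weakly \Lind.

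The hard part will be precisely this last density argument. In the almost \Lin proof one could use the individual closures $\bar{W}_{x_j}$ to cover $X$, but weak \Lind ness only yields density of the union $G$, so one cannot argue pointwise through closures of single $W_{x_j}$. The remedy is to test membership in $\bar{\bigcup\V}$ against arbitrary basic neighborhoods and exploit density of $G$ to land the witness point $s'$ inside one chosen $W_{x_j}$, after which the finite compact-slice cover supplies the matching second coordinate. The reduction to basic covers at the outset is routine.
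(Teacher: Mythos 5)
Your proof is correct and takes essentially the same approach as the paper's: the same reduction to basic rectangular covers, the same compact-slice argument producing $W_x=\bigcap_{i\leq n_x}U_{x_i}$, the same application of weak Lindel\"ofness to get a countable family whose union is dense (the paper writes this as $X=\bar{\bigcup_{j}W_{x_j}}$, which is precisely your density of $G$), and the same verification that every basic neighborhood $P\times Q$ of an arbitrary $(s,t)$ meets some $U_{x_{j_0},i_0}\times V_{x_{j_0},i_0}\in\V$ via the fact that the sets $V(x_{j_0},i)$ cover $Y$. The only cosmetic difference is that you exhibit an explicit witness point $s'\in P\cap W_{x_{j_0}}$, where the paper simply notes that the relevant intersections are nonempty.
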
 

\begin{proof} 
It is sufficient to prove this statement only in the case when $\U$ is an open cover with basic open sets in $X\times Y$, i.e.
without loss of generality, let $\U\subseteq\{U\times V: U \in \tau_X,V\in\tau_Y\}$ be an open cover for $X\times Y$.
Then for each $x\in X$, the set $\{x\}\times Y$ is compact, and $\U$ is an open cover of $\{x\}\times Y$. Hence $\U$ has a finite subcover $\{U(x,i)\times V(x,i):i=1,\ldots n_x\}$, i.e.
\begin{displaymath}
\{x\}\times Y\subseteq\bigcup_{i\leq n_x} U(x,i)\times V(x,i).
\end{displaymath}
Let $\displaystyle{W_x=\bigcap_{i\leq n_x} U(x,i)}$. 
Then $\W=\{W_x:x\in X\}$ is an open cover of $X$, and $X$ is weakly Lindel\"of. 
Hence there is a countable subfamily $\{W_{x_j}:j\in\N\}$ such that 
\begin{displaymath}
X=\bar{\bigcup_{j\in\N}W_{x_j}}.
\end{displaymath}
Let $\V=\{U(x_j,i)\times V(x_j,i):j\in\N,i=1,\ldots,n_{x_j}\}$.
We shall prove that $X\times Y\subset\bar{\bigcup\V}$.

Let $(s,t)\in X\times Y$ and let $U_s\times V_t$ be a basic open neighborhood of $(s,t)$. 
Since $\displaystyle{s \in X\subseteq\bar{\bigcup_{j\in\N}W_{x_j}}}$,  there is a
$j_0\in\N$ such that $U_s\cap W_{x_{j_0}}\neq\emptyset$.
Then we also have that $U_s\cap U(x_{j_0},i)\neq\emptyset$ for $i=1,\ldots,n_{x_{j_0}}$.
Since $\displaystyle{\{x_{j_0}\}\times Y\subseteq\bigcup_{i\leq n_{x_{j_0}}}(U(x_{j_0},i)\times V(x_{j_0},i))}$, we have that
$\displaystyle{Y=\bigcup_{i\leq n_{x_{j_0}}}V(x_{j_0},i)}$.
Hence there is $i_0\leq n_{x_{j_0}}$ such that $t\in V(x_{j_0},i_0)$.
We then have that $U_s\cap U(x_{j_0},i_0)\neq\emptyset$ and $t\in V_t\cap V(x_{j_0},i_0)$.
Hence
\begin{displaymath}
\left(U_s\times V_t\right)\cap \left(  U(x_{j_0},i_0) \times V(x_{j_0},i_0)\right)\neq\emptyset,
\end{displaymath}
i.e.
\begin{displaymath}
U_s\times V_t\cap\left(\bigcup\V\right)\neq\emptyset,
\end{displaymath}
i.e. $(s,t)\in\bar{\bigcup\V}$. 
Therefore $X\times Y$ is weakly \Lind.
\end{proof}

\subsection{Quasi-Lindel\"of Spaces} \label{quasi-Lindelof}

In \cite{A}, Archangelski introduced the following notion: 

\begin{defn} \label{qL} A topological space is called \emph{quasi-Lindel\"of} if every closed subspace of it is weakly Lindel\"of. 
\end{defn} 

He used this notion to generalize the result of \cite{BGW} that the cardinality of any normal first countable weakly \Lin space is at most continuum.
He proved that any regular first countable quasi-\Lin space has cardinality at most continuum. 
The relation between those two results is based on the fact that every quasi-\Lin space is weakly \Lin and also the following:

\begin{propn} If $X$ is normal and weakly Lindel\"of, then $X$ is quasi-Lindel\"of. 
\end{propn}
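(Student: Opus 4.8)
The plan is to reduce weak \Lind ness of an arbitrary closed subspace to that of $X$ itself, using normality to peel off the part of the cover we wish to discard. Recall that, exactly as weak \Lind ness of the closed set $\{q\}\times\ka$ was tested in Example \ref{BGW}, the operative condition is that for every closed $F\subseteq X$ and every cover $\U=\{G_\al:\al\in A\}$ of $F$ by sets open in $X$ there is a countable $\V\subseteq\U$ with $F\subseteq\bar{\bigcup\V}$. So I would fix such an $F$ and such a cover $\U$, and put $G=\bigcup\U$; then $G$ is open and $F\subseteq G$.

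First I would use normality to insert a collar between $F$ and $G$. Since $F$ is closed, $G$ is open, and $F\subseteq G$, the characterization of normality (whenever a closed set lies inside an open set $U$ there is an open $V$ with the set $\subseteq V\subseteq\bar V\subseteq U$) yields an open $W$ with $F\subseteq W\subseteq\bar W\subseteq G$. Now $\U\cup\{X\setminus\bar W\}$ is an open cover of $X$: every point of $\bar W$ lies in some $G_\al$ because $\bar W\subseteq G$, and every remaining point lies in $X\setminus\bar W$. As $X$ is weakly \Lind, some countable subfamily has dense union; discarding $X\setminus\bar W$ if it occurs, this subfamily is $\{G_{\al_n}:n\in\N\}$ for suitable indices $\al_n$, and $\bar{\bigl(\bigcup_{n}G_{\al_n}\bigr)\cup(X\setminus\bar W)}=X$.

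It then remains to check that $F\subseteq\bar{\bigcup_n G_{\al_n}}$, and this is where the collar does the work. Take $p\in F$ and any open $O\ni p$; since $p\in F\subseteq W$, I may replace $O$ by $O\cap W$ and so assume $O\subseteq W\subseteq\bar W$, whence $O\cap(X\setminus\bar W)=\emptyset$. Density of $\bigl(\bigcup_n G_{\al_n}\bigr)\cup(X\setminus\bar W)$ forces $O$ to meet it, and as $O$ misses the second set it must meet $\bigcup_n G_{\al_n}$; since $O$ was an arbitrary neighbourhood of $p$, this gives $p\in\bar{\bigcup_n G_{\al_n}}$. Hence $F\subseteq\bar{\bigcup_n G_{\al_n}}$, so $\{G_{\al_n}:n\in\N\}$ is the required countable subfamily and $X$ is quasi-\Lind.

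The main obstacle is precisely the leftover set $X\setminus\bar W$: if one instead covered $X$ by $\U\cup\{X\setminus F\}$, then near a boundary point of $F$ every neighbourhood meets $X\setminus F$, so $X\setminus F$ could by itself supply all the density and the argument would collapse. Normality is exactly what lets me replace $X\setminus F$ by $X\setminus\bar W$, whose closure is disjoint from the neighbourhood $W$ of $F$ and so contributes nothing to the density at points of $F$. This is the ingredient absent from Example \ref{BGW}, which is weakly \Lind but, being non-normal, fails to be quasi-\Lind.
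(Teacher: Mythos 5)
Your proof is correct and follows essentially the same route as the paper's: normality inserts an open $W$ with $F\subseteq W\subseteq\bar W\subseteq\bigcup\U$, the cover is augmented by $X\setminus\bar W$, weak \Lind ness gives the countable subfamily, and the collar guarantees $F\subseteq\bar{\bigcup_n G_{\al_n}}$. The only cosmetic difference is that the paper states this last step as $F\cap\bar{X\setminus\bar V}=\emptyset$, whereas you verify it pointwise via neighborhoods contained in $W$; these are the same observation.
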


\begin{proof}
Let $X$ be a normal weakly Lindel\"of space, and $F$ be a closed subset of $X$. 
Suppose $\G$ is a family of open subsets of $X$, covering $F$, i.e. $\displaystyle{F \subseteq \bigcup\G}$.
Set $U=\bigcup \G$. Then $U$ is open and $U \supseteq F$. 
From normality of $X$, we have that there exists an open set $V \subset X$ such that $F \subseteq V \subseteq \bar{V} \subseteq U$. 
So, consider the following open cover of $X$: $\G_1 = \G \cup \{X \setminus \bar{V}\}$.
This is indeed a cover, as $F \subset \bigcup \G$ and $X \setminus \bar{V} \supseteq X \setminus F$. 
Since $X$ is weakly Lindel\"of, we can choose a countable subfamily $\G' \subseteq \G$ such that $\displaystyle{\left(\bar{\bigcup \G'} \right)\cup \bar{X\setminus \bar{V}}=X}$. 
Now, we have that $F \cap \bar{X\setminus \bar{V}}=\emptyset$:
suppose the opposite, i.e. there is an $x \in F \cap \bar{X\setminus \bar{V}}\subseteq V \cap \bar{X \setminus \bar{V}}$.
Then $x \in V$, which is open, and $x \in \bar{X\setminus \bar{V}}$ - which is closed. 
Hence, $V \cap \bar{X\setminus\bar{V}} \neq \emptyset$. 
But $V \cap (X\setminus \bar{V}) \subseteq \bar{V} \cap (X \setminus \bar{V}) = \emptyset$ - a contradiction.

Therefore, $F$ cannot be covered by $\bar{X \setminus \bar{V}}$.
Hence, $F \subset \bar{\bigcup_{\g \in \G'}\g}$.
\end{proof}

In \cite{BGW} it is stated that every CCC space is weakly \Lind. 
Here, we prove a similar result:

\begin{thm}[PS] If $X$ is CCC then $X$ is quasi-Lindel\"of. 
\end{thm}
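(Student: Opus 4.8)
The plan is to unwind the definition of quasi-\Lind ness and reduce matters to the implication already established in Theorem~\ref{CCCwL}. By the definition of quasi-\Lind ness (Definition~\ref{qL}), it suffices to show that every closed subspace $F$ of a CCC space $X$ is weakly \Lind. Since Theorem~\ref{CCCwL} tells us that CCC spaces are weakly \Lind, the cleanest route would be to prove that the CCC property is inherited by closed subspaces: if that holds, then each closed $F$ is itself CCC and hence weakly \Lind, and we are done.

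Accordingly, the first thing I would attempt is a direct adaptation of the transfinite construction from the proof of Theorem~\ref{CCCwL}, carried out inside $F$. Assume for contradiction that some closed $F$ fails to be weakly \Lind, witnessed by an open cover $\G=\{G_\al:\al<\be\}$ of $F$ such that no countable subfamily has union dense in $F$. Writing each relatively open set as $G_\al=H_\al\cap F$ with $H_\al$ open in $X$, I would build by recursion on $\g<\w_1$ a point $p_\g\in F$ together with an open set $N_\g\ni p_\g$ in $X$ that misses $\bigcup_{\de<\g}(H_{\al_\de}\cap F)$ (possible since, by assumption, the countably many sets chosen so far are not dense in $F$), then choose an index $\al_\g$ with $p_\g\in H_{\al_\g}$, and set $V_\g=N_\g\cap H_{\al_\g}$. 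Each $V_\g$ is open in $X$ and meets $F$ at $p_\g$, so it is nonempty, and by construction $V_\g$ is disjoint from $H_{\al_\de}\cap F$ for every $\de<\g$. This produces an $\w_1$-indexed family $\{V_\g:\g<\w_1\}$ that one hopes to show is a pairwise disjoint family of nonempty open subsets of $X$, contradicting that $X$ is CCC.

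The step I expect to be the main obstacle is exactly the disjointness of the $V_\g$ \emph{as subsets of $X$}. The construction only guarantees that $V_\g$ avoids the previously chosen sets \emph{within} $F$: we know $N_\g\cap H_{\al_\de}\cap F=\emptyset$, but $N_\g\cap H_{\al_\de}$ may well be nonempty off $F$, so $V_\g\cap V_\de$ need only be disjoint from $F$, not empty. In other words, the natural construction yields sets that are disjoint relative to $F$ (equivalently, genuinely disjoint relatively open subsets of $F$) but not genuinely disjoint open subsets of $X$. The crux of the whole theorem is therefore the passage from a disjoint family of relatively open subsets of the closed set $F$ to a disjoint family of open subsets of the ambient CCC space $X$ -- equivalently, the question of whether CCC is inherited by closed subspaces. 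I would concentrate all my effort on this point, since everything else is a routine transcription of the argument for Theorem~\ref{CCCwL}; and if no clean way to separate the $V_\g$ inside $X$ can be found, one should reexamine whether mere CCC of $X$ is sufficient here or whether an additional hypothesis on $X$ is required.
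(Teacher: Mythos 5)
Your diagnosis of where the naive construction breaks is exactly right, but the proposal stops at that obstacle rather than overcoming it, so as it stands there is no proof. The idea you are missing --- and it is precisely how the paper's proof closes the gap --- is to abandon the traces $H_{\al_\de}\cap F$ altogether and make the forbidden region at stage $\g$ the closure in $X$ of the union of the \emph{full} open sets chosen so far. The paper phrases the failure of the quasi-\Lin property as: there is a closed $F$ and a family $\G=\{U_\al:\al<\be\}$ of sets open in $X$ covering $F$ such that $F\setminus\bar{\bigcup\G'}\neq\emptyset$ for every countable $\G'\subset\G$, the closure being taken in $X$. At stage $\g$ this hands you a point of $F$ lying outside $\bar{\bigcup_{\si<\g}U_{\al_\si}}$; that point belongs to some $U_{\al_\g}$ of the cover, with $\al_\g$ automatically a new index, and one sets $V_\g=U_{\al_\g}\cap\bigl(X\setminus\bar{\bigcup_{\si<\g}U_{\al_\si}}\bigr)$. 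Nonemptiness is witnessed by the point itself, and the disjointness you could not obtain is now automatic: for $\de<\g$ we have $V_\de\subseteq U_{\al_\de}\subseteq\bar{\bigcup_{\si<\g}U_{\al_\si}}$, while $V_\g$ lies in the complement of that closure. This produces $\w_1$ many pairwise disjoint nonempty open subsets of $X$, contradicting CCC.

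Your closing suspicion that ``mere CCC of $X$'' might not suffice is, however, well founded --- not for the relative statement just proved, but for the literal reading of Definition \ref{qL} as ``every closed \emph{subspace} is weakly \Lind''. Indeed $\Sorg\times\Sorg$ is separable, hence CCC, yet its anti-diagonal $D$ (Example \ref{SorgplanenotL}) is closed, uncountable and discrete, and an uncountable discrete space is not weakly \Lind; so CCC cannot imply the subspace version. What the transfinite construction actually proves is the relative property: for every closed $F\subseteq X$ and every cover of $F$ by sets open in $X$, some countable subfamily $\V$ satisfies $F\subseteq\bar{\bigcup\V}$, with closure in $X$. This is strictly weaker than weak \Lind ness of the subspace $F$, and the gap between the two formulations is exactly the gap your construction fell into: your sets were disjoint relative to $F$ but not in $X$. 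So the right conclusion is not that an extra hypothesis on $X$ is needed, but that the theorem must be read with the relative form of the quasi-\Lin property, under which the modified choice of $V_\g$ above completes your argument.
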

\begin{proof}
Suppose that $X$ is CCC but not quasi-Lindel\"of. 
Then there exists a closed nonempty set $F\subset X$ and an uncountable family
$\G=\{U_\al:\al<\be\}$ ($\be\geq\w_1$) of non-empty open in $X$ sets such that
$\displaystyle{F\subset\bigcup_{\al<\be}U_\al}$, but for any countable
$\G'\subset\G$ we have that $F\setminus\bar{\bigcup \G'}\neq\emptyset$.

We will construct an uncountable collection of nonempty disjoint open in $X$ sets $\{V_\g:\g<\w_1\}$, thus contradicting CCC.

Let $V_0=U_0$. 
Then $F\setminus\bar{U_0}\neq\emptyset$ (and $X\setminus \bar{U_0}\neq\emptyset$).
Hence 
\begin{displaymath}
\emptyset\neq F\setminus\bar{U_0}\subset \bigcup\{U_\al:\al<\be,\al>0\}
\end{displaymath}
and therefore
\begin{displaymath}
\emptyset
\neq F\setminus\bar{U_0}
= \bigcup\{U_\al\cap  (F\setminus\bar{U_0}):\al<\be,\al>0\}.
\end{displaymath}
Thus we will have $\al_1\geq1$ such that $U_{\al_1}\cap( F\setminus\bar{U_0})\neq\emptyset$ and moreover 
$U_{\al_1}\cap( X\setminus\bar{U_0})\neq\emptyset$.

Let $V_1=U_{\al_1}\cap( X\setminus\bar{U_0})$.
Then $V_1\neq\emptyset$, $V_1$ is open in $X$ and $V_1\cap V_0=\emptyset$.
Again we have 
\begin{displaymath}
\emptyset\neq F\setminus\bar{U_0\cup U_{\al_1}}=\bigcup\{U_\al\cap(F\setminus \bar{U_0\cup U_{\al_1}});\al<\be, \al\notin\{0,\al_1\}\}.
\end{displaymath}
Hence there is $U_{\al_2}\in\G$, $\al_1\notin\{0,\al_1\}$ with
$U_{\al_2}\cap ( F\setminus\bar{U_0\cup U_{\al_1}})\neq\emptyset$ and moreover
$U_{\al_2}\cap ( X\setminus\bar{U_0\cup U_{\al_1}})\neq\emptyset$.
Define $V_2=U_{\al_2}\cap ( X\setminus\bar{U_0\cup U_{\al_1}})$.
Then $V_2\neq\emptyset$, $V_2$ is open in $X$ and $V_2$ is disjoint from $V_0,V_1$. 
Let $\g_0<\w_1$ and suppose that we have already constructed a family $\{V_\de:\de<\g_0\}$ of non-empty, disjoint open in $X$ sets with 
$V_\de=U_{\al_\de}\cap (X\setminus\bar{\cup\{U_{\al_\si}:\si<\de\}})$, where
$\al_\de\notin\{\al_\si:\si<\de\}$. 
Since $\g_0$ is a countable ordinal and $F$ is not weakly Lindel\"of in $X$, we have that
\begin{displaymath}
\emptyset
\neq F\setminus\bar{\cup\{U_{\al_\de}:\de<\g_0\}}
=\bigcup\{U_\al\cap(F\setminus\bar{\cup\{U_{\al_\de}:\de<\g_0\}}):\al<\de,\al\notin\{\al_\de:\de<\g_0\}\}.
\end{displaymath}
Hence we can choose $\al_{\g_0}$ such that
\begin{displaymath}
U_{\al_{\g_0}}\cap(F\setminus\bar{\cup\{U_{\al_\de}:\de<\g_0\}})\neq\emptyset
\end{displaymath}
and $\al_{\g_0}\notin\{\al_\de:\de<\g_0\}$.
Hence moreover 
\begin{displaymath}
U_{\al_{\g_0}}\cap(X\setminus\bar{\cup\{U_{\al_\de}:\de<\g_0\}})\neq\emptyset.
\end{displaymath}
Define $V_{\g_0}=U_{\al_{\g_0}}\cap(X\setminus\bar{\cup\{U_{\al_\de}:\de<\g_0\}})$. 
Then $V_{\g_0}\neq\emptyset$, $V_{\g_0}$ is open in $X$ and by construction $V_{\g_0}\cap V_\de=\emptyset$ for every $\de<\g_0$.

Thus we have constructed a family $\{V_\g:\g<\w_1\}$ of nonempty disjoint open in $X$ sets, contradicting CCC.
Hence, $X$ is quasi-Lindel\"of. 
\end{proof}

We can also prove that:

\begin{propn}[PS]\label{ctsqL} Continuous images of quasi-Lindel\"of spaces are quasi-Lindel\"of.
\end{propn}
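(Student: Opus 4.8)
The plan is to reduce the problem to the already-established fact that the continuous image of a weakly \Lin space is weakly \Lind (Proposition \ref{ctswL}). First I would pass to the surjective case: given a continuous $f : X \rightarrow Y$ with $X$ quasi-\Lind, replacing $Y$ by $f(X)$ with its subspace topology costs nothing, so I assume $f$ is onto. The goal is then to verify the defining property of Definition \ref{qL}, namely that every closed subspace of $Y$ is weakly \Lind.

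So let $F \subseteq Y$ be closed. The key observation is that $f^{-1}(F)$ is closed in $X$, since $f$ is continuous, and therefore $f^{-1}(F)$ is weakly \Lin because $X$ is quasi-\Lind. Next I would check that the restriction $g = f|_{f^{-1}(F)} : f^{-1}(F) \rightarrow F$ is a continuous surjection: continuity is immediate for a restriction, and surjectivity onto $F$ holds precisely because $f$ is onto $Y$ --- any $y \in F$ has a preimage $x \in X$, and $f(x) = y \in F$ forces $x \in f^{-1}(F)$. Thus $F$ is the continuous image of the weakly \Lin space $f^{-1}(F)$, so by Proposition \ref{ctswL}, $F$ is weakly \Lind.

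Since $F$ was an arbitrary closed subset of $Y$, every closed subspace of $Y$ is weakly \Lind, which is exactly the assertion that $Y = f(X)$ is quasi-\Lind, as required.

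There is no substantive obstacle here beyond bookkeeping; the only point that genuinely requires attention is the reduction to the surjective case, since without it the restricted map $g$ need not cover all of $F$ and Proposition \ref{ctswL} would not apply to $F$ itself. Everything else is a direct chaining of two standard facts: preimages of closed sets under continuous maps are closed, and weak \Lind ness is preserved by continuous images.
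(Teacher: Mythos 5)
Your proposal is correct and follows essentially the same route as the paper: pull back the closed set $F\subseteq Y$ to the closed (hence weakly \Lind) set $f^{-1}(F)$, and apply Proposition \ref{ctswL} to the restriction $f|_{f^{-1}(F)}:f^{-1}(F)\rightarrow F$. Your explicit reduction to the surjective case is a small bookkeeping improvement over the paper's write-up, which leaves the surjectivity of the restriction (and even the fact that $F$ should be taken closed in $Y$, not in $X$) implicit.
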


\begin{proof}
Let $f:X\rightarrow Y$ be continuous, $X$ be quasi-Lindel\"of, and $F\subset X$ be closed.  
Since $f^{-1}(F)$ is closed and $X$ is quasi-\Lind, it follows that $f^{-1}(F)$ is weakly \Lind.
By Proposition \ref{ctswL} applied to $f|_{f^{-1}(F)}:f^{-1}(F)\rightarrow F$, we have that $F$ is weakly Lindel\"of.
Hence, $Y$ is quasi-Lindel\"of. 
\end{proof}

\begin{cor} Quotient spaces of quasi-Lindel\"of spaces are quasi-Lindel\"of.
\end{cor}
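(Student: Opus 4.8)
The plan is to reduce this corollary to Proposition \ref{ctsqL}, exactly as the analogous corollaries for the \Lin and the weakly \Lin properties were obtained from their respective continuous-image results. First I would recall that a quotient space $Y$ of a space $X$ comes equipped with its quotient map $q:X\rightarrow Y$, and that by definition a quotient map is in particular continuous and surjective. Consequently $Y=q(X)$ is precisely a continuous image of $X$.

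Since $X$ is assumed quasi-\Lind, Proposition \ref{ctsqL} then applies directly: the continuous image $q(X)=Y$ of a quasi-\Lin space is quasi-\Lind. This is the whole argument, and so I would keep the written proof to essentially one sentence, observing that the quotient map is continuous and invoking Proposition \ref{ctsqL}.

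The only point that merits even a moment's attention is the verification that a quotient map satisfies the hypotheses of Proposition \ref{ctsqL}, namely that it is continuous (so that the image is genuinely a \emph{continuous} image) and onto (so that the image is all of $Y$, not a proper subspace). Both are immediate from the definition of a quotient map, so there is no genuine obstacle here; the content lies entirely in the already-proved Proposition \ref{ctsqL}, and this corollary is a routine consequence of it.
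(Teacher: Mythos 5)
Your proof is correct and matches the paper's intent exactly: the corollary is stated immediately after Proposition \ref{ctsqL} precisely because a quotient map is a continuous surjection, so the quotient space is a continuous image of a quasi-\Lin space and Proposition \ref{ctsqL} applies. Nothing further is needed.
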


Let us point out that the product of two quasi-\Lin spaces need not be quasi-\Lind:
\begin{xmpl}
The Hajnal and Juhasz space constructed in the previous section is such an example: it is not quasi-\Lin because it is not weakly \Lind.
\end{xmpl}

\section{Conclusion}

We have outlined basic properties of \Lin spaces and we have compared them with analogous properties for almost \Lind, weakly \Lind, and quasi-\Lin spaces.
We have shown that these three generalizations of \Lind ness in various cases have similar type of behavior as \Lin spaces. 
We have provided examples where such properties fail to be preserved. 
Most, if not all of these examples, cannot be found in textbooks, but rather only in  mathematical journals. 

In view of the present comparison, we conclude with two open questions:

\begin{oq}
Is the product of a quasi-\Lin and a compact space quasi-\Lind?
\end{oq}
(Let us note that such a product will  be weakly \Lin by Theorem \ref{wLtimescpct=wL}.)
\begin{oq}
What kind of topological properties are required in order to show that an almost \Lin space is quasi-\Lin (or respectively, that an almost \Lin space is weakly \Lind)?
\end{oq}

\bibliographystyle{alpha}
\bibliography{Bibliography}
\end{document}